\NeedsTeXFormat{LaTeX2e}
\documentclass[a4paper,12pt,reqno]{amsart}
\usepackage{amssymb}
\usepackage{amsmath}
\usepackage{amsthm}
\usepackage{amsfonts}
\usepackage{mathtools}
\usepackage{enumitem}
\makeatletter
\def\dicho#1{\expandafter\@dicho\csname c@#1\endcsname}
\def\@dicho#1{\ifnum#1>1 or \else\fi(\@Roman#1)}
\makeatother
\AddEnumerateCounter{\dicho}{\@dicho}{or (III)}
\newlist{dichotomy}{enumerate}{1}
\setlist[dichotomy]{label=\dicho*,leftmargin=1.5cm}

\usepackage{mathrsfs}
\usepackage[all]{xy}
\setcounter{tocdepth}{1}
\usepackage{hyperref}
\usepackage{cleveref}
\usepackage{url}
\usepackage{comment}
\usepackage{xcolor}
\usepackage{pifont}

\usepackage[british]{babel}
\usepackage[margin=1.2in]{geometry}
\setlength{\belowcaptionskip}{-0.3em}

\numberwithin{equation}{section} \numberwithin{figure}{section}

\usepackage{makecell}
\usepackage{multirow}
\usepackage{etoolbox}
\usepackage[title]{appendix}
% \makeatletter
% \def\dicho#1{\expandafter\@dicho\csname c@#1\endcsname}
% \def\@dicho#1{\ifnum#1>1 or \else\fi(\@Roman#1)}
% \makeatother
% \AddEnumerateCounter{\dicho}{\@dicho}{or (III)}
% \newlist{dichotomy}{enumerate}{1}
% \setlist[dichotomy]{label=\dicho*,leftmargin=1.5cm}
\usepackage{eqparbox}
\newcommand{\eqmathbox}[2][M]{\eqmakebox[#1]{$\displaystyle#2$}}
\usepackage{pbox}

\newcommand{\fp}{\mathfrak{p}}
\newcommand{\fq}{\mathfrak{q}}

\newcommand{\NotTypeOneTwoPrimes}{\textup{\textsf{NotTypeOneTwoPrimes}}}
\newcommand{\TypeTwoPrimes}{\textup{\textsf{TypeTwoPrimes}}}
\newcommand{\TypeOnePrimes}{\textup{\textsf{TypeOnePrimes}}}
\newcommand{\TypeTwoNotMomosePrimes}{\textup{\textsf{TypeTwoNotMomosePrimes}}}
\newcommand{\TypeTwoMomosePrimes}{\textup{\textsf{TypeTwoMomosePrimes}}}
\newcommand{\PrimesUpTo}{\textup{\textsf{PrimesUpTo}}}

\newcommand{\F}{\mathbb{F}}
\newcommand{\eps}{\varepsilon}
\newcommand{\PP}{\mathbb{P}}
\newcommand\ZZ{\mathbb{Z}}

\newcommand{\Q}{\mathbb{Q}}

\newcommand{\Nm}{\textup{Nm}}
\newcommand{\Mod}[1]{\ (\mathrm{mod}\ #1)}
\newcommand{\id}{\textup{id}}
\newcommand{\Aux}{\textup{\textsf{Aux}}}
\newcommand{\Gen}{\textup{\textsf{Gen}}}
\newcommand{\AuxGen}{\textup{\textsf{AuxGen}}}

\renewcommand{\O}{\mathcal{O}}
\DeclareMathOperator{\IsogPrimeDeg}{\textup{\textsf{IsogPrimeDeg}}}
\DeclareMathOperator{\IsogCyclicDeg}{\textup{\textsf{IsogCyclicDeg}}}
\DeclareMathOperator{\DLMV}{\textup{\textsf{DLMV}}}

\DeclareMathOperator{\Aut}{Aut}
\DeclareMathOperator{\lcm}{lcm} 
\DeclareMathOperator{\Spec}{Spec}
\DeclareMathOperator{\Gal}{Gal}
\DeclareMathOperator{\ord}{ord}
\DeclareMathOperator{\Cl}{\textup{Cl}}
\DeclareMathOperator{\Supp}{\textup{\textsf{Supp}}}

\theoremstyle{case}

\newtheorem{lemma}{Lemma}

\newtheorem{theorem}[lemma]{Theorem}
\newtheorem*{claim}{Claim}
\newtheorem{algorithm}[lemma]{Algorithm}
\newtheorem{proposition}[lemma]{Proposition}
\newtheorem{corollary}[lemma]{Corollary}
\numberwithin{table}{section}

\theoremstyle{definition}
\newtheorem{example}[lemma]{Example}

\newtheorem{definition}[lemma]{Definition}
\newtheorem{remark}[lemma]{Remark}

\newtheorem*{condC}{Condition C}
\newtheorem*{condCC}{Condition CC}

\numberwithin{lemma}{section}

\AtBeginEnvironment{appendices}{\crefalias{section}{appendix}}

\begin{document}

\title[Quadratic Isogeny Primes]
{Explicit isogenies of prime degree over quadratic fields}

\author{\sc Barinder S. Banwait}
\address{Barinder S. Banwait \\
Harish-Chandra Research Institute\\
A CI of Homi Bhabha National Institute\\
Chhatnag Road\\
Jhunsi\\
Prayagraj - 211019\\
India}
\curraddr{
Fakult\"{a}t f\"{u}r Mathematik\\
Universit\"{a}t Heidelberg\\
Im Neuenheimer Feld 205\\
69120 Heidelberg\\
Germany}
\email{barinder.s.banwait@gmail.com}

\subjclass[2010]
{11G05  (primary), %   Elliptic curves over global fields
11Y60,   %	Evaluation of number-theoretic constants
11G15.   %	Complex multiplication and moduli of abelian varieties
(secondary)}

\begin{abstract}
Let $K$ be a quadratic field which is not an imaginary quadratic field of class number one. We describe an algorithm to compute the primes $p$ for which there exists an elliptic curve over $K$ admitting a $K$-rational $p$-isogeny. This builds on work of David, Larson-Vaintrob, and Momose. Combining this algorithm with work of Bruin-Najman, \"{O}zman-Siksek, and most recently Box, we determine the above set of primes for the three quadratic fields $\Q(\sqrt{-10})$, $\Q(\sqrt{5})$, and $\Q(\sqrt{7})$, providing the first such examples after Mazur's 1978 determination for $K = \Q$. The termination of the algorithm relies on the Generalised Riemann Hypothesis.
\end{abstract}

\maketitle

\section{Introduction}

Let $K$ be a number field, and $N$ a positive integer.

\begin{definition}
We say that $N$ is a \textbf{cyclic isogeny degree for $K$} if there exists an elliptic curve $E/K$ which possesses a $K$-rational cyclic isogeny of degree $N$. We denote the set of such integers, for a given $K$, by $\IsogCyclicDeg(K)$. If $N$ is prime, we refer to $N$ as an \textbf{isogeny prime for $K$}, and denote the set of such primes by $\IsogPrimeDeg(K)$.
\end{definition}

Consider the base case of $K = \Q$. By an approach which has come to be known as \emph{Mazur's formal immersion method} - involving a delicate study of the N\'{e}ron model of the Eisenstein quotient of the Jacobian of the modular curve $X_0(N)$ - Mazur proved the following.

\begin{theorem}[Mazur, Theorem 1 in \cite{mazur1978rational}]
\[ \IsogPrimeDeg(\Q) = \left\{2, 3, 5, 7, 11, 13, 17, 19, 37, 43, 67, 163\right\}.\]
\end{theorem}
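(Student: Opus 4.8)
The statement is an equality of two sets, so the plan is to prove both inclusions. For $\supseteq$ I must exhibit, for each of the twelve primes $N$, an elliptic curve over $\Q$ carrying a rational cyclic $N$-isogeny, equivalently a non-cuspidal rational point on $X_0(N)$. I would organise this by the genus of $X_0(N)$: for $N\in\{2,3,5,7,13\}$ the curve has genus $0$ with a rational cusp, hence is $\PP^1_{\Q}$ and has infinitely many rational points, cofinitely many non-cuspidal; for $N\in\{11,17,19\}$ it has genus $1$, and one checks by a direct Weierstrass computation that there are rational points beyond the cusps; and for $N\in\{43,67,163\}$ the point comes from an elliptic curve with complex multiplication by the maximal order $\O$ of the class-number-one field $\Q(\sqrt{-N})$: since $N$ ramifies as $(N)=\fp^2$ with $\fp$ stable under $\Gal(\bar\Q/\Q)$, the quotient isogeny $E\to E/E[\fp]$ is a $\Q$-rational $N$-isogeny. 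The remaining prime $N=37$ is exceptional, and I would verify its two known non-cuspidal rational points directly. This inclusion is thus a finite, if delicate, verification.

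The substance lies in $\subseteq$: a prime $N$ admitting a rational $N$-isogeny must lie in the list. Here I would run Mazur's formal immersion method. A point of $Y_0(N)(\Q)$ is a pair $(E,C)$ with $C$ a $\Gal(\bar\Q/\Q)$-stable cyclic subgroup of order $N$, so the representation on $E[N]$ is reducible and upper triangular with diagonal characters $\lambda,\lambda'$ satisfying $\lambda\lambda'=\chi_N$, the mod-$N$ cyclotomic character; Serre's description of the action of inertia at $N$ constrains their ramification and hence their possible shapes. The geometric engine is the Jacobian $J:=J_0(N)$ together with the Eisenstein ideal $I\subset\mathbb{T}$ of its Hecke algebra. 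The decisive input, which I would import from Mazur's study of the Eisenstein ideal, is that the Eisenstein quotient $\tilde J$ of $J$, an abelian variety over $\Q$, has \emph{finite} Mordell--Weil group $\tilde J(\Q)$, generated by the image of the cuspidal class $(0)-(\infty)$, whose order is the numerator of $(N-1)/12$.

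Granting this, I would study the composite $f\colon X_0(N)\to J\to\tilde J$ normalised by $f(\infty)=0$, and prove that $f$ is a \emph{formal immersion} at the reduction $\infty_{/\F_\ell}$ of the cusp modulo a carefully chosen auxiliary prime $\ell\neq N$; concretely this asks that the induced map on cotangent spaces be injective, which the $q$-expansion principle turns into the non-vanishing modulo $\ell$ of a determinant formed from Hecke eigenvalues $a_n$ of a basis of differentials. For the endgame, let $x=(E,C)$ be a hypothetical non-cuspidal rational point with $E$ of good reduction at $\ell$. The Eichler--Shimura relation $T_\ell=F+\langle\ell\rangle F^\vee$ in characteristic $\ell$, together with the Eisenstein congruence $T_\ell\equiv 1+\ell\pmod I$, forces the image $f(x)\in\tilde J(\Q)$ to be a torsion class that reduces modulo $\ell$ to the image of a cusp; since $\tilde J(\Q)$ is finite of order prime to $\ell$, reduction is injective on it, so $f(x)$ is itself the image of a cusp, and the formal immersion at $\infty_{/\F_\ell}$ then forces $x$ to be that cusp, contradicting non-cuspidality. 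This argument kills all sufficiently large $N$ uniformly, and the finitely many remaining small primes are checked individually, leaving precisely $37,43,67,163$ as genuine exceptions.

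The main obstacle is the finiteness of $\tilde J(\Q)$: showing that the Mordell--Weil rank of the Eisenstein quotient vanishes is the deep heart of the argument, resting on a descent controlled by the Eisenstein ideal and on a close analysis of the N\'eron model of $J_0(N)$, rather than on any soft input. A close second difficulty is verifying the formal immersion hypothesis uniformly in $N$ while simultaneously choosing $\ell$ so that both the immersion and the prime-to-$\ell$ injectivity of $\tilde J(\Q)_{\mathrm{tors}}$ hold; the interplay of these constraints is exactly what pins down the numerical bound $N\le 163$.
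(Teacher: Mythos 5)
This theorem is not proved in the paper; it is quoted verbatim from Mazur with a citation, so the only meaningful comparison is with Mazur's own argument, of which your proposal is a sketch. Your treatment of the inclusion $\supseteq$ is sound: genus $0$ for $N\in\{2,3,5,7,13\}$, genus $1$ with non-cuspidal torsion for $N\in\{11,17,19\}$, CM by the maximal order of the class-number-one field $\Q(\sqrt{-N})$ for $N\in\{43,67,163\}$, and a direct check for $37$; this matches the table in Mazur's opening paragraph to which the present paper also points.

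For the inclusion $\subseteq$ there are two genuine problems. First, your endgame misapplies the formal immersion: you take $x=(E,C)$ with $E$ of \emph{good} reduction at $\ell$ and try to force $f(x)$ to reduce to the image of a cusp via Eichler--Shimura. That is backwards. The formal immersion argument requires that $x$ itself specialise to the cusp $\infty$ modulo $\ell$, which happens precisely when $E$ has potentially \emph{multiplicative} reduction at $\ell$ (after possibly replacing $x$ by $w_N x$ to move from the cusp $0$ to $\infty$); one then uses finiteness of $\tilde J(\Q)$ and injectivity of reduction on torsion to get $f(x)=0$, and the formal immersion to conclude $x=\infty$. No Hecke congruence will make $f(x)$ specialise to a cusp when $E$ has good reduction at $\ell$. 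Second, and more importantly, this argument only eliminates points whose underlying curve has some prime of potentially multiplicative reduction. The remaining case --- $E$ with everywhere potentially good reduction --- is the half of Mazur's proof your sketch omits, and it is exactly the half that produces the bound $N\le 163$: one analyses the isogeny character $\lambda$ (as recapitulated in \Cref{sec:prelims} of this paper), uses the triviality of the class group of $\Q$ to get $\lambda^{12}=\chi_N^a$ globally with $a\in\{0,4,6,8,12\}$, eliminates $a\ne 6$, and in the case $a=6$ proves Mazur's ``Claim'' (quoted verbatim in \Cref{sec:type_2_primes}) that every odd prime $p<N/4$ is a quadratic non-residue mod $N$; this forces $\Q(\sqrt{-N})$ to have class number one, whence $N\le 163$ by Baker--Heegner--Stark. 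It is this strand, not ``the interplay'' of the formal-immersion constraints with the choice of $\ell$, that pins down $163$ and explains why $43$, $67$, $163$ survive as genuine isogeny primes.
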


It is worth stressing that, prior to this theorem, it was not known that the set $\IsogPrimeDeg(\Q)$ was even finite.

Moreover, in the introduction to his paper, Mazur explains how consideration of the ``graph of rational isogenies'' reduces the determination of the larger set $\IsogCyclicDeg(\Q)$ to the explicit determination of $X_0(N)(\Q)$ for a handful of composite values $N$, which for all but five values of $N$ - $13^2, 13 \cdot 7, 13 \cdot 5, 13 \cdot 3$ and $5^3$ - had already been carried out. It was then subsequently Kenku, in a series of four papers \cite{kenku1}, \cite{kenku3}, \cite{kenku2}, \cite{kenku4}, who established that $X_0(N)(\Q)$ consists only of the cuspidal points for these five values of $N$, thereby yielding the complete resolution of cyclic isogeny degrees over the rationals.

\begin{theorem}[Kenku]
\[ \IsogCyclicDeg(\Q) = \left\{1 \leq N \leq 19\right\} \cup \left\{21, 25, 27, 37, 43, 67, 163\right\}.\]
\end{theorem}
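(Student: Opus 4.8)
The plan is to translate the statement into a question about rational points on the modular curves $X_0(N)$ and then to combine Mazur's theorem with the structure of $\Q$-isogeny classes to reduce to finitely many explicit curves. First I would record the dictionary that $N \in \IsogCyclicDeg(\Q)$ if and only if $X_0(N)$ carries a non-cuspidal rational point: such a point is a pair $(E, C)$ with $E/\Q$ an elliptic curve and $C \subset E$ a $\Gal(\overline{\Q}/\Q)$-stable cyclic subgroup of order $N$, and the quotient isogeny $E \to E/C$ is then the desired cyclic $N$-isogeny. Since a cyclic $N$-isogeny factors, for each prime power $p^e \| N$, through a cyclic $p$-isogeny, every prime divisor of $N$ lies in $\IsogPrimeDeg(\Q)$; Mazur's theorem therefore confines the prime support of any $N \in \IsogCyclicDeg(\Q)$ to $\{2, 3, 5, 7, 11, 13, 17, 19, 37, 43, 67, 163\}$.

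Next I would exploit the graph of rational isogenies. Fixing $E/\Q$, let $\Gamma_E$ be the graph whose vertices are the $\Q$-isomorphism classes in the isogeny class of $E$ and whose edges are the prime-degree $\Q$-rational isogenies. A cyclic $N$-isogeny on $E$ corresponds to a non-backtracking path in $\Gamma_E$ whose edge degrees multiply to $N$, and conversely the existence of such a path for some $E$ is precisely what must be decided for each candidate $N$. Two inputs cut the problem down to a finite check: Mazur's list restricts the admissible edge degrees, while the structure of $\Q$-isogeny classes (an isogeny class contains at most eight curves, and the possible shapes of $\Gamma_E$ are severely constrained) bounds the admissible $N$. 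A squarefree product $pq$ in $\IsogCyclicDeg(\Q)$ forces a single curve to admit simultaneous independent $p$- and $q$-isogenies, i.e. a non-cuspidal point on $X_0(pq)$; prime powers $p^e$ are treated through $X_0(p^e)$; and only a short list of composite $N$ survives. As Mazur observed, $X_0(N)(\Q)$ had already been computed for every surviving $N$ except the five values $13^2, 13\cdot 7, 13\cdot 5, 13\cdot 3$ and $5^3$, that is $N \in \{39, 65, 91, 125, 169\}$.

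The remaining, and hardest, step, carried out by Kenku, is to show that for each of these five values $X_0(N)(\Q)$ consists only of cusps, so that none of $39, 65, 91, 125, 169$ is a cyclic isogeny degree. Each such $X_0(N)$ has genus at least $3$, so finiteness of $X_0(N)(\Q)$ is automatic but ineffective, and one must prove the stronger statement that there are no non-cuspidal rational points. The approach is to decompose $\Jac(X_0(N))$ up to isogeny via the Atkin--Lehner operators and newform theory, and to produce quotient curves $X_0(N) \to X_0(N)/\langle w_d\rangle$ (and further elliptic or low-genus quotients) whose Jacobians have Mordell--Weil rank zero, so that their rational points are finite and effectively listable. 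A hypothetical non-cuspidal rational point on $X_0(N)$ maps to a rational point on every quotient, and intersecting the pullbacks of these finitely many quotient points leaves only the cusps. The genuine obstacle is that a single rank-zero quotient rarely separates the points: one must assemble enough independent rank-zero (or, failing that, Chabauty-accessible) quotients and track the images of the cusps carefully so that their pullbacks exhaust $X_0(N)(\Q)$. This case-by-case rank computation and point-chasing, spread across Kenku's four papers, is where essentially all the work lies; combined with the previously known determinations and with Mazur's theorem, it yields the displayed equality.
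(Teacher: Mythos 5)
Your proposal is correct and follows essentially the same route the paper describes: the paper does not reprove this result but, exactly as you outline, invokes Mazur's reduction via the graph of rational isogenies to the determination of $X_0(N)(\Q)$ for finitely many composite $N$, with the five outstanding values $13^2$, $13\cdot 7$, $13\cdot 5$, $13\cdot 3$, $5^3$ (your $\{169, 91, 65, 39, 125\}$) settled by Kenku's four papers. Your sketch of Kenku's quotient-and-rank-zero strategy is a fair summary of where the real work lies, and nothing in your reduction step is at odds with the paper's account.
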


These values of cyclic isogeny degrees for $\Q$ were known prior to the work of Mazur, and can be explained in terms of clearly describable geometric conditions on the modular curve $X_0(N)$; see the opening paragraph and table of \cite{mazur1978rational} for more details.

For an arbitrary number field $K$, it is no longer necessarily true that $\IsogPrimeDeg(K)$ (and hence \emph{a fortiori} $\IsogCyclicDeg(K)$) is finite. To see this, consider an elliptic curve $E$ which has complex multiplication by an order $\mathcal{O}$ of an imaginary quadratic field $L$. Observe that, for any rational prime $p$ which splits or ramifies in $\mathcal{O}$, $E$ admits an endomorphism of degree $p$ rational over the ring class field $H_\mathcal{O}$ of $\mathcal{O}$, and so defines a \textbf{CM point} in $X_0(p)(H_\mathcal{O})$. Thus, if $K$ contains the Hilbert class field of an imaginary quadratic field, then $\IsogPrimeDeg(K)$ is infinite.

Let us therefore consider a number field $K$ which does not contain the Hilbert class field of an imaginary quadratic field. In attempting to establish finiteness of $\IsogPrimeDeg(K)$ in this case, Momose \cite{momose1995isogenies} carried out a systematic study of the \emph{mod-$p$ isogeny character $\lambda$} arising from considering the Galois action on the kernel of a $p$-isogeny, and showed (Theorem A in \emph{loc. cit.}) the existence of a constant $C_K$ such that, if $p > C_K$, then $\lambda$ must fall into one of three types: ``Type $1$'', ``Type $2$'', or ``Type $3$''. We will recall this theorem (\Cref{thm:momose_isogeny}) in \Cref{sec:prelims} below; for now we remark that, by definition of ``Type $3$'', it does not arise under our hypothesis on $K$, so we exclude it from consideration for the remainder of the Introduction.

Momose showed that Type $1$ primes arise only finitely often under the additional hypothesis that $[K : \Q] \leq 12$ (Theorem $3$ in \emph{loc. cit.}); that Type $2$ primes arise only finitely often under the Generalised Riemann Hypothesis (Remark $8$ in \emph{loc. cit.}); and that Type $2$ primes arise only finitely often - unconditional on GRH but without an effective bound - if $K$ is a quadratic field (Proposition $1$ in \emph{loc. cit.}). In this way, Momose was able to establish the following unconditional result.

\begin{theorem}[Momose, Theorem B in \cite{momose1995isogenies}]
Let $K$ be a quadratic field which is not an imaginary quadratic field of class number $1$. Then $\IsogPrimeDeg(K)$ is finite.
\end{theorem}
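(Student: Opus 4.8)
The plan is to combine Momose's trichotomy (\Cref{thm:momose_isogeny}) with the three separate finiteness inputs recalled above: under the stated hypothesis on $K$, each of the three possible types for the mod-$p$ isogeny character contributes only finitely many primes. First I would fix the Momose constant $C_K$ furnished by \Cref{thm:momose_isogeny}, so that it suffices to control the isogeny primes $p > C_K$; the finitely many primes $p \leq C_K$ are harmless and can simply be adjoined at the end. For such $p$, any elliptic curve $E/K$ carrying a $K$-rational $p$-isogeny gives rise to a mod-$p$ isogeny character $\lambda$ which is of Type $1$, Type $2$, or Type $3$, so it remains to treat these three cases.

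For Type $3$, I would argue that it does not occur at all. Since $[K:\Q] = 2$, if $K$ contained the Hilbert class field $H_L$ of an imaginary quadratic field $L$, then from $L \subseteq H_L \subseteq K$ together with $[L:\Q] = [K:\Q] = 2$ we would be forced to have $K = H_L = L$, whence $L$ has class number one and $K$ is itself an imaginary quadratic field of class number one, which is excluded by hypothesis. Thus $K$ contains no such Hilbert class field, and by the very definition of Type $3$ no Type $3$ prime arises.

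For Type $1$, since $K$ is quadratic we have $[K:\Q] = 2 \leq 12$, so Momose's bound on Type $1$ primes for fields of degree at most $12$ (Theorem $3$ of \emph{loc. cit.}) applies directly and leaves only finitely many Type $1$ primes. For Type $2$, I would invoke the unconditional result for quadratic fields (Proposition $1$ of \emph{loc. cit.}), which shows --- without assuming GRH, at the cost of an effective bound --- that only finitely many Type $2$ primes arise when $K$ is quadratic; it is precisely this step for which one otherwise relies on GRH (Remark $8$ of \emph{loc. cit.}), and the special arithmetic of quadratic fields is what removes that dependence.

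Putting these together, every isogeny prime $p > C_K$ is of Type $1$ or Type $2$, and each of these two sets is finite; adjoining the finitely many primes $p \leq C_K$ then shows that $\IsogPrimeDeg(K)$ is finite. The genuine difficulty sits entirely inside the inputs I am quoting --- in particular the unconditional Type $2$ bound for quadratic fields, whose proof rests on a delicate analysis of the isogeny character via class field theory --- whereas the assembly of the theorem from these inputs is, as above, essentially formal.
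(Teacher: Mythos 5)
Your proposal is correct and follows exactly the route the paper itself describes for this theorem: combine Momose's trichotomy with the exclusion of Type~$3$ (a quadratic field containing the Hilbert class field of an imaginary quadratic field would have to be imaginary quadratic of class number one), Momose's Theorem~3 for Type~$1$ primes in degree $\leq 12$, and Momose's Proposition~1 for the unconditional finiteness of Type~$2$ primes over quadratic fields. The assembly is the same; the substance lives in the cited inputs, as you note.
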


To be sure - and to quote Mazur from the introduction of \cite{Mazur3} speaking in the context of rational points on $X_0(N)$ - the assertion of mere finiteness is not all that is wanted. The work of making Momose's constant $C_K$ above explicit was subsequently and independently carried out unconditionally by David (Th\'{e}or\`{e}me I', Proposition 2.4.1 in \cite{david2008caractere}; see also Th\'{e}or\`{e}me II in \cite{david2012caractere}), and conditionally on GRH by Larson and Vaintrob (Theorem 7.9 in \cite{larson_vaintrob_2014}; see also Remark 7.8 which explains the extent to which assuming GRH yields a better bound on $C_K$). Both David as well as Larson and Vaintrob make the observation that, as a consequence of Merel's resolution of the uniform boundedness conjecture for torsion orders of elliptic curves over number fields \cite{merel1996bornes}, Type 1 primes as above arise only finitely often; one therefore obtains the following result, written explicitly in the literature by the latter authors.

\begin{theorem}[Larson and Vaintrob, Corollary 2 in \cite{larson_vaintrob_2014}\label{thm:isog-finite}]
Assume the Generalised Riemann Hypothesis. For a number field $K$, $\IsogPrimeDeg(K)$ is finite if and only if $K$ does not contain the Hilbert class field of an imaginary quadratic field.
\end{theorem}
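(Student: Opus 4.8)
The plan is to establish the two implications of the biconditional separately, noting that only the finiteness direction invokes GRH. I would first dispose of the easier implication, that finiteness of $\IsogPrimeDeg(K)$ forces $K$ to contain no Hilbert class field of an imaginary quadratic field, by proving its contrapositive. Suppose $K \supseteq H$, where $H$ is the Hilbert class field of an imaginary quadratic field $L$. Fix an elliptic curve $E/H$ with complex multiplication by the maximal order $\O_L$; such a curve exists since the $j$-invariant of any curve with CM by $\O_L$ lies in $H$, and over $H$ the whole ring $\O_L$ is realised by $H$-rational endomorphisms. For every rational prime $p$ that splits in $L$, say $p\O_L = \fp\overline{\fp}$, the subgroup $E[\fp]\cong \O_L/\fp \cong \F_p$ is cyclic of order $\Nm(\fp)=p$ and is stable under $G_H$, so the quotient map $E \to E/E[\fp]$ is an $H$-rational --- hence $K$-rational --- cyclic $p$-isogeny. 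As half of the rational primes split in $L$, this exhibits infinitely many isogeny primes for $K$, and the argument is unconditional.

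For the converse I assume $K$ contains no such Hilbert class field and let $C_K$ denote the effective constant furnished by David and by Larson--Vaintrob. Since there are only finitely many primes below $C_K$, it suffices to bound the isogeny primes $p > C_K$. Fix such a $p$, an elliptic curve $E/K$ with a $K$-rational $p$-isogeny of kernel $\langle P\rangle$, and the associated mod-$p$ isogeny character $\lambda\colon G_K \to \F_p^\times$ describing the Galois action on $P$; write $\chi$ for the mod-$p$ cyclotomic character, so that $\chi\lambda^{-1}$ governs the quotient $E/\langle P\rangle$. By Momose's trichotomy (\Cref{thm:momose_isogeny}), $\lambda$ is of Type $1$, Type $2$, or Type $3$. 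By definition a Type $3$ character cuts out, after a quadratic twist, the Hilbert class field of an imaginary quadratic field inside $K$; our standing hypothesis excludes this case outright, so it remains to bound the primes producing a Type $1$ or a Type $2$ character.

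The Type $1$ case rests on Merel's theorem. Here the defining property makes $\lambda^{12}$ (or the complementary $(\chi\lambda^{-1})^{12}$, obtained after passing to twelfth powers to normalise the inertia action at primes above $p$) unramified everywhere, hence it factors through the ideal class group $\Cl(K)$ and has order dividing $h_K := \#\Cl(K)$. Consequently $E$, or its quotient, acquires a point of exact order $p$ over an extension $K'/\Q$ with $[K':\Q]\le 12\,h_K\,[K:\Q]$, and Merel's uniform boundedness theorem bounds $p$ purely in terms of this degree. The Type $2$ case is the heart of the matter, and the only place GRH enters: its defining property forces $\lambda^{12}\chi^{-6}$ to be unramified everywhere, so it too factors through $\Cl(K)$ and $\lambda^{12h_K} = \chi^{6h_K}$ as characters of $G_K$. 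For a good prime $\fq \nmid p$, the value $\lambda(\mathrm{Frob}_\fq)$ is the reduction, modulo a prime above $p$, of a root $\mu$ of $X^2 - a_\fq X + \Nm(\fq)$, where $a_\fq$ is the trace of Frobenius of $E$ at $\fq$; evaluating the character identity at $\mathrm{Frob}_\fq$ yields $\mu^{12h_K} \equiv \Nm(\fq)^{6h_K}\Mod{p}$, whence $p \mid \Nm\!\big(\mu^{12h_K} - \Nm(\fq)^{6h_K}\big)$ with a right-hand side independent of $p$. If this integer is nonzero, $p$ is bounded.

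The hard part will be securing both the existence and the effectiveness of a suitable auxiliary prime $\fq$: one must choose $\fq$ so that the integer above does not vanish, and do so with $\Nm(\fq)$ controlled uniformly in $E$. The persistent vanishing of $\mu^{12h_K} - \Nm(\fq)^{6h_K}$ for every $\fq$ is precisely the CM degeneracy that would place us in Type $3$; since that case is already excluded, a non-degenerate $\fq$ must exist. To locate one of small norm I would invoke GRH-conditional effective Chebotarev (as packaged by Larson--Vaintrob), which produces such a $\fq$ with $\Nm(\fq)$ bounded by a fixed power of the logarithm of the discriminant, thereby pinning $p$ to an explicit finite set; unconditionally one recovers only non-effective finiteness, and in the quadratic case Momose's Proposition $1$ supplies this by a separate argument. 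Assembling the three cases together with the finitely many $p \le C_K$ gives the finiteness of $\IsogPrimeDeg(K)$ and completes the biconditional.
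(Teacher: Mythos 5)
Your overall architecture is the right one (and, to be clear, the paper does not prove this theorem itself --- it cites Larson--Vaintrob --- but its Sections 2, 4 and 5 lay out exactly the decomposition you use): the CM construction for the ``only if'' direction is correct and unconditional; the reduction via Momose's trichotomy is correct; Type 3 is indeed excluded outright by the hypothesis on $K$; and the Type 1 case via the everywhere-unramified twelfth power, the degree bound $12\,h_K\,[K:\Q]$ and Merel is exactly the argument recalled in \Cref{sec:type_1_primes}.

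The genuine gap is in your Type 2 case, specifically the sentence claiming that persistent vanishing of $\mu^{12h_K} - \Nm(\fq)^{6h_K}$ ``is precisely the CM degeneracy that would place us in Type 3.'' It is not. The identity $\mu^{12h_K} = \Nm(\fq)^{6h_K} = (\mu\bar{\mu})^{6h_K}$ forces $\mu/\bar{\mu}$ to be a root of unity, which rules out ordinary reduction and puts you in the potentially good \emph{supersingular} case; in \Cref{tab:david_types} this is the row $C_s(\eps,\fq)=0$, which is the \emph{Type 2} degeneracy, whereas Type 3 is the ordinary degeneracy $C_o(\eps,\fq)=0$. So excluding Type 3 buys you nothing here. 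Worse, for a genuine Momose Type 2 prime the vanishing really does persist: by \Cref{lem:mom_lem_3-5}, every prime $q<p/4$ that splits in $K$ and at which $E$ has potentially good reduction must be \emph{inert} in $\Q(\sqrt{-p})$, hence supersingular with trace zero --- exactly the degenerate situation --- so no auxiliary prime of the kind you seek exists among small primes, and your argument stalls. The correct mechanism (Condition CC together with \Cref{prop:type_2_bound}) is to show that this rigidity is self-defeating: one applies GRH-effective Chebotarev to the field $K(\sqrt{-p})$ --- not to a field of bounded discriminant --- to produce a prime $q$ of size $O\bigl((\log p + \log\Delta_K)^2\bigr)$ splitting in $K(\sqrt{-p})$, contradicting the forced inertness. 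Because the discriminant of $K(\sqrt{-p})$ grows with $p$, this yields the self-referential inequality $p \leq (16\log p + 16\log(12\Delta_K)+26)^4$, which must then be solved for $p$; your claim that $\Nm(\fq)$ is ``bounded by a fixed power of the logarithm of the discriminant'' and ``controlled uniformly in $E$'' elides precisely this dependence on $p$, which is where all the difficulty (and the need for GRH) lives.
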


An explicit expression for $C_K$ does not immediately yield an explicit upper bound on $\IsogPrimeDeg(K)$; one further needs to effectively bound Type 2 primes, which Theorem 7.9 in \cite{larson_vaintrob_2014} does only up to the determination (depending on GRH) of various effectively computable absolute constants. Since these have not been computed, an explicit upper bound on $\IsogPrimeDeg(K)$ has not hitherto been exhibited for any $K \neq \Q$. 

Specialise now to $[K : \Q] = 2$. By combining the bounds of David with those of Larson and Vaintrob, and improving upon the process of determining an upper bound for Type 2 primes, we obtain the following. 

\begin{algorithm}
Assume GRH. Then there is an algorithm which, given a quadratic field $K$ which is not imaginary quadratic of class number $1$, computes an upper bound on $\IsogPrimeDeg(K)$.
\end{algorithm}

Given the provenance of this bound, we henceforth refer to it as the \textbf{David-Larson-Momose-Vaintrob} bound for $K$, or $\DLMV(K)$ for short. The algorithm has been implemented in Sage \cite{sagemath}, and running it on the quadratic fields $\Q(\sqrt{D})$ (not imaginary quadratic of class number one) with $|D| \leq 10$ yields the first explicit upper bounds on $\IsogPrimeDeg(K)$ since Mazur's work in the 70s.

\begin{corollary}\label{cor:DLMV}
Assume GRH. Then for each $K$ in \Cref{tab:dlmv}, $\DLMV(K)$ gives a bound on the isogeny primes for $K$.
\end{corollary}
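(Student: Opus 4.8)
The plan is to recognize that Corollary \ref{cor:DLMV} is not a mathematical statement requiring a conceptual proof, but rather a computational assertion: for each specified quadratic field $K$ in Table \ref{tab:dlmv}, one simply runs the algorithm of the preceding Algorithm statement to produce the claimed bound $\DLMV(K)$. The content has already been established --- the algorithm's correctness (namely, that $\DLMV(K)$ genuinely bounds $\IsogPrimeDeg(K)$ under GRH) is exactly the assertion of the Algorithm. Thus the corollary follows immediately from that Algorithm by specializing the input to each of the finitely many fields listed, and the ``proof'' is really just an invocation of the implementation together with a report of its output.

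Concretely, I would proceed as follows. First, I would recall that the algorithm assembles the bound by combining three ingredients: the effective bound of David on Momose's constant $C_K$ (controlling when the isogeny character $\lambda$ must be of Type 1, 2, or 3), the GRH-conditional refinements of Larson--Vaintrob, and the paper's own improved method for bounding Type 2 primes. For a fixed quadratic $K = \Q(\sqrt{D})$, each of these produces an explicit finite quantity depending only on $D$ (equivalently on the discriminant and the splitting behaviour of small rational primes in $K$), and the final bound $\DLMV(K)$ is obtained by taking a maximum (or suitable combination) of these. Second, I would confirm that each $K$ in the table indeed satisfies the standing hypothesis --- quadratic and not imaginary quadratic of class number one --- so that the Algorithm applies verbatim.

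Third, I would point to the Sage \cite{sagemath} implementation and simply record, for each of the finitely many fields $\Q(\sqrt{D})$ with $|D| \leq 10$ meeting the hypothesis, the value $\DLMV(K)$ returned by running the code; these outputs populate Table \ref{tab:dlmv}. Since there are only finitely many such fields and each computation terminates (under GRH, by the Algorithm), this is a matter of exhibiting the results rather than proving anything further.

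The only genuine obstacle here is not mathematical but one of reproducibility and trust: the reader must be able to verify that the Sage code correctly implements the David, Larson--Vaintrob, and Momose bounds as described, and that it terminates with the stated outputs. I would therefore anticipate that the substantive work lies in the correctness of the Algorithm itself (established earlier) and in making the implementation transparent and checkable; the corollary proper is then an immediate specialization, and I expect its proof to consist of little more than the sentence ``Run the algorithm on each field in \Cref{tab:dlmv}.''
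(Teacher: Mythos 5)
Your proposal is correct and matches the paper's treatment: the paper gives no separate proof of this corollary, deferring instead to the later definition of $\DLMV(K)$ as the maximum of three explicit quantities (the Oesterl\'{e}-derived bound $(1+3^{12h_K})^2$ for Momose Type 1 primes, the Type 2 bound $T_K$, and David's $C(K,\cdot)$ for the remaining signatures, with Type 3 excluded by the hypothesis on $K$), together with the Sage computation that populates the table. Your reading of the corollary as an immediate specialization of the Algorithm, requiring only a hypothesis check and a report of the implementation's output, is exactly the paper's approach.
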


\begin{table}[htp]
\begin{center}
\begin{tabular}{|c|c|c|}
\hline
$\Delta_K$ & $K$ & $\DLMV(K)$\\
\hline
$-40$ & $\Q(\sqrt{-10})$ & $3.20 \times 10^{316}$\\
$-24$ & $\Q(\sqrt{-6})$ & $2.99 \times 10^{308}$\\
$-20$ & $\Q(\sqrt{-5})$ & $2.58 \times 10^{305}$\\
$8$ & $\Q(\sqrt{2})$ & $4.06 \times 10^{139}$\\
$12$ & $\Q(\sqrt{3})$ & $1.68 \times 10^{152}$\\
$5$ & $\Q(\sqrt{5})$ & $5.65 \times 10^{126}$\\
$24$ & $\Q(\sqrt{6})$ & $9.76 \times 10^{177}$\\
$28$ & $\Q(\sqrt{7})$ & $1.08 \times 10^{189}$\\
$40$ & $\Q(\sqrt{10})$ & $2.59 \times 10^{354}$\\
\hline
\end{tabular}
\vspace{0.3cm}
\caption{\label{tab:dlmv}The David-Larson-Momose-Vaintrob bounds for quadratic fields $\Q(\sqrt{D})$ with $|D| \leq 10$, excluding imaginary quadratic fields of class number one.}
\end{center}
\end{table}

According to the introduction of \cite{BPR}, $10^{80}$ is approximately the number of atoms in the visible universe. The adjective `astronomical' therefore falls somewhat short of describing the numbers in \Cref{tab:dlmv}. Indeed, as observed by Kamienny and Mazur in the introduction of \cite{kamienny1995rational}, such enormous bounds, however explicit, are of small comfort when one expects $\IsogPrimeDeg(K)$ to be not much larger than $\IsogPrimeDeg(\Q)$, and that in parading such bounds one risks some level of public embarrassment\footnote{especially so for us considering our bounds are several orders of magnitude larger than theirs!}.

We are thus motivated to go further. Rather than merely enumerating an upper bound for $\IsogPrimeDeg(K)$, we devise a novel method, independent of the $\DLMV$ bound, to show that the isogeny primes essentially all arise in the support of a handful of explicitly computable integers, which may be considered as the main essential contribution of the present paper. The following is a summarised version of the full version (\Cref{thm:main_full}) to be found in \Cref{sec:parts_together}.

\begin{theorem}\label{alg:main}
Assume GRH. Then there is an algorithm which, given a quadratic field $K$ of discriminant $\Delta_K$ which is not imaginary quadratic of class number $1$, computes a superset for $\IsogPrimeDeg(K)$ as the union of four explicitly computable sets:
\begin{align*}
    \IsogPrimeDeg(K) \subseteq  \NotTypeOneTwoPrimes(K) &\cup \TypeOnePrimes(K) \\
        \cup \ \TypeTwoPrimes(K) &\cup \Supp(\Delta_K).
\end{align*}
\end{theorem}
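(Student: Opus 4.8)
The plan is to run over an arbitrary $p \in \IsogPrimeDeg(K)$, attach to it the mod-$p$ isogeny character, and use Momose's trichotomy to sort $p$ into one of the four advertised sets. Concretely, fix $E/K$ admitting a $K$-rational $p$-isogeny with kernel $C \cong \ZZ/p\ZZ$; the action of $G_K = \Gal(\overline{K}/K)$ on $C$ is a character $\lambda \colon G_K \to \F_p^\times$, the mod-$p$ isogeny character. First I would invoke \Cref{thm:momose_isogeny} to produce the effective Momose constant $C_K$, so that for $p > C_K$ the character $\lambda$ is necessarily of Type $1$, $2$, or $3$. Because $K$ is quadratic but not imaginary quadratic of class number one, it contains no Hilbert class field of an imaginary quadratic field, and hence the Type $3$ alternative is vacuous. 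Defining $\NotTypeOneTwoPrimes(K)$ to be the finite, enumerable set of isogeny primes $p \le C_K$ whose character is neither Type $1$ nor Type $2$ then accounts for all small primes, and I may assume henceforth that $p > C_K$ with $\lambda$ of Type $1$ or Type $2$.

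The Type $1$ primes are disposed of by standard means. The defining shape of a Type $1$ character (a prescribed power of the mod-$p$ cyclotomic character up to an unramified twist) together with Merel's uniform boundedness theorem \cite{merel1996bornes}, in the effective form exploited by David \cite{david2008caractere} and by Larson--Vaintrob \cite{larson_vaintrob_2014}, bounds $p$ and pins it to a finite, explicitly computable list depending only on $\Delta_K$. This list is $\TypeOnePrimes(K)$.

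The substantive step is the Type $2$ case, where I expect the genuine difficulty to lie. The available effective bounds (Momose's Remark $8$, and Theorem 7.9 of \cite{larson_vaintrob_2014}) are astronomically large and moreover rest on uncomputed absolute constants, so rather than trying to bound $p$ I would instead aim to \emph{name} it. For each of a handful of auxiliary primes $\fq$ of $K$ of small norm, I would reduce $E$ at $\fq$ and use the Type $2$ shape of $\lambda$ to constrain the characteristic polynomial of Frobenius at $\fq$, forcing $p$ to divide an explicit integer manufactured from the trace $a_\fq$ and $\Nm\fq$. Intersecting the supports of these integers over several $\fq$ produces the finite set $\TypeTwoPrimes(K)$, assembled from its two sub-cases $\TypeTwoMomosePrimes(K)$ and $\TypeTwoNotMomosePrimes(K)$ according to the internal bifurcation of Momose's analysis. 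It is here that GRH is indispensable: via effective Chebotarev it guarantees auxiliary primes $\fq$ of controlled norm realising the splitting behaviour the construction requires, which is what makes the support computation --- and hence the whole algorithm --- terminate.

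The fourth set enters because the auxiliary integers above can degenerate: at the ramified primes of $K$ the local constraints on $\lambda$ coming from the different can force the manufactured integer to vanish identically, so the support method says nothing. Since any such $p$ divides $\Delta_K$, rather than analyse each ramified prime by hand I would simply adjoin $\Supp(\Delta_K)$ to the union as a safe catch-all. Collecting the four contributions yields the claimed containment. The real work --- and the step I would budget most of the argument for --- is proving that the integers attached to Type $2$ primes are simultaneously nonzero away from $\Supp(\Delta_K)$ and effectively computable under GRH, since this is exactly where the degenerate ramified cases must be isolated and the existence of good auxiliary primes invoked.
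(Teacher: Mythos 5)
There is a genuine gap, and it sits exactly where the paper locates its main contribution. Your definition of $\NotTypeOneTwoPrimes(K)$ as ``the set of isogeny primes $p \le C_K$ whose character is neither Type 1 nor Type 2'' is not an algorithm: deciding whether a given $p \le C_K$ is an isogeny prime is the original problem, and even replacing this by \emph{all} primes up to $C_K$ just reproduces the $\DLMV$ bound (of size $10^{126}$--$10^{354}$ here), which is precisely what the theorem is designed to avoid. The paper's mechanism, which your proposal never supplies, is the multiplicative sieve of \Cref{prop:david_prop_215} and \Cref{cor:main_algorithmic_result}: for any signature $\eps \in \{0,4,6,8,12\}^G$ not of Type 1, 2 or 3 and any completely split auxiliary prime $\fq$ (non-principal if $K$ is imaginary), the integers $A(\eps,\fq)$, $B(\eps,\fq)$, $C(\eps,\fq)$ are all \emph{non-zero} and $p$ divides $ABC(\eps,\fq)$; taking $\Supp$ of a gcd over several $\fq$ gives a small computable superset with no reference to $C_K$ at all. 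The same device (with the integer $D(\fq)$ and Kamienny's formal immersion result, \Cref{prop:type_1_primes}) replaces your appeal to Merel/Oesterl\'{e} for $\TypeOnePrimes(K)$, which again is used in the paper only for the $\DLMV$ bound, not for the algorithm of this theorem.

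Two smaller inaccuracies: the role of GRH in the Momose Type 2 case is not to furnish auxiliary primes for a support computation --- that part ($\TypeTwoNotMomosePrimes$) is unconditional --- but to produce, via Bach--Sorenson effective Chebotarev, a small degree-one prime split in $K(\sqrt{-p})$ whose existence contradicts Condition CC for large $p$, yielding the explicit bound of \Cref{prop:type_2_bound}; one then tests Condition CC for every prime up to that bound. And $\Supp(\Delta_K)$ enters not because auxiliary integers degenerate at ramified primes, but because the entire isogeny-character machinery (\Cref{prop:david_isog_char} onwards) is developed under the standing hypothesis that $p$ is unramified in $K$, so those finitely many $p$ must be adjoined unexamined.
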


More specifically, while the algorithm to compute the sets $\NotTypeOneTwoPrimes(K)$ and $\TypeOnePrimes(K)$ utilises a multiplicative sieve to obtain fairly tight supersets and is implemented in Sage, the algorithm to compute $\TypeTwoPrimes(K)$ requires one to check whether a certain elementary condition on Legendre symbols is satisfied for all prime numbers up to the bound on Type 2 primes. While this bound is fairly sizeable (about $60$ billion for $K = \Q(\sqrt{5})$), it is nowhere near the dizzying heights of $\DLMV(K)$, and more importantly it can be managed in short order with an optimised and parallel-threaded PARI/GP \cite{PARI} implementation. Note however that, while the finiteness of this bound on Type 2 primes is unconditional in this case, its effective determination \emph{is} conditional on GRH, and therefore GRH is fundamentally required to obtain an algorithm which provably terminates. See the discussion at the end of \Cref{sec:type_2_primes} for more on the dependence on GRH.

The implementation of the algorithm is therefore split between PARI/GP and Sage. As will be explained in \Cref{sec:type_1_primes}, the primes $p \leq 71$ will necessarily always be contained in the output; and clearly $\IsogPrimeDeg(\Q) \subseteq \IsogPrimeDeg(K)$ for all $K$; so in the following table we show only those primes $p \geq 73$ that furthermore are not contained in $\IsogPrimeDeg(\Q)$.

\begin{corollary}\label{cor:DLMV}
Assume GRH. Then for each $K$ in \Cref{tab:put_isog_primes}, if $p$ is an isogeny prime for $K$ which is strictly larger than $71$ and not equal to $163$, then $p$ is listed in the column \textup{Large Possible Isogeny Primes}.
\end{corollary}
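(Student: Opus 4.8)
The plan is to apply \Cref{alg:main} directly to each field appearing in \Cref{tab:put_isog_primes}. For such a $K$ the theorem produces the explicit superset
\[
\IsogPrimeDeg(K) \subseteq \NotTypeOneTwoPrimes(K) \cup \TypeOnePrimes(K) \cup \TypeTwoPrimes(K) \cup \Supp(\Delta_K),
\]
so it suffices to compute each of the four constituent sets, form their union, and then strike out every prime $p \le 71$ together with $p = 163$. The corollary is then exactly the assertion that the primes surviving this process are those recorded in the column \textup{Large Possible Isogeny Primes}.

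I would compute the four sets in turn. The sets $\NotTypeOneTwoPrimes(K)$ and $\TypeOnePrimes(K)$ are obtained from the Sage implementation of the multiplicative sieve, which returns fairly tight finite supersets, while $\Supp(\Delta_K)$ is just the handful of rational primes dividing the discriminant and is read off at once. None of these three steps presents any difficulty.

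The computation of $\TypeTwoPrimes(K)$ is where the genuine effort — and the main obstacle — lies. As explained in the discussion preceding the statement, this set is produced by testing the elementary Legendre-symbol condition cutting out Type 2 primes against every rational prime below the Type 2 bound. This bound is unconditionally finite, but its effective value — of the order $6 \times 10^{10}$ for $K = \Q(\sqrt{5})$ — is available only under GRH, which is precisely where the hypothesis enters and why the search is guaranteed to terminate. The obstacle is one of scale rather than of mathematics: each individual Legendre-symbol test is trivial, but the number of primes to be examined rules out a brute-force loop and forces one to use the optimised, parallel-threaded PARI/GP implementation.

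Finally, I would take the union of the four computed sets and remove the two families of ``expected'' primes. By the analysis of Type 1 primes to be given in \Cref{sec:type_1_primes}, every prime $p \le 71$ is forced into the output irrespective of $K$; and since $\IsogPrimeDeg(\Q) \subseteq \IsogPrimeDeg(K)$ with $163$ the unique element of $\IsogPrimeDeg(\Q)$ exceeding $71$, the prime $163$ is likewise always a genuine isogeny prime. Discarding these leaves exactly the field-specific large candidate primes of the corollary, as tabulated in \Cref{tab:put_isog_primes}.
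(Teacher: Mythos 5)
Your proposal is correct and follows the same route as the paper: the corollary is obtained by running the algorithm of \Cref{alg:main} (in its explicit form \Cref{thm:main_full}) on each field, with the Sage sieve handling $\NotTypeOneTwoPrimes(K)$ and $\TypeOnePrimes(K)$, the PARI/GP search up to the GRH-dependent bound of \Cref{prop:type_2_bound} handling $\TypeTwoPrimes(K)$, and then discarding the primes $\leq 71$ and those in $\IsogPrimeDeg(\Q)$ (of which $163$ is the only one exceeding $71$), exactly as the paper describes in the paragraph preceding the table. You have also correctly located both the role of GRH (effectivity of the Type 2 bound) and the computational bottleneck.
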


\begin{table}[htp]
\begin{center}
\begin{tabular}{|c|c|c|}
\hline
$\Delta_K$ & $K$ & Large Possible Isogeny Primes\\
\hline
$-40$ & $\Q(\sqrt{-10})$ & 73\\
$-24$ & $\Q(\sqrt{-6})$ & 73, 97, 103\\
$-20$ & $\Q(\sqrt{-5})$ & 73, 89, 97\\
$8$ & $\Q(\sqrt{2})$ & 79\\
$12$ & $\Q(\sqrt{3})$ & 97\\
$5$ & $\Q(\sqrt{5})$ & 73, 79\\
$24$ & $\Q(\sqrt{6})$ & 73, 97\\
$28$ & $\Q(\sqrt{7})$ & 73\\
$40$ & $\Q(\sqrt{10})$ & 73, 79, 97\\
\hline
\end{tabular}
\vspace{0.3cm}
\caption{\label{tab:put_isog_primes}The large possible isogeny primes ($p \geq 73$ and $p \notin \IsogPrimeDeg(\Q)$) for the same quadratic fields as in \Cref{tab:dlmv}.}
\end{center}
\end{table}

Having obtained a manageable list of possible isogeny primes for the $K$s in the above table, we should now like to determine, for each prime $p$ in this list, whether or not it is indeed an isogeny prime; that is, whether or not $X_0(p)(K)$ contains any noncuspidal points.

Such questions are notoriously difficult; but fortunately, many of these $p$'s we need to consider are such that the genus of $X_0(p)$ is fairly small; and most importantly, there has been in recent years much progress in the study of \emph{quadratic points on low-genus modular curves}. Of particular significance are the works of Bruin and Najman \cite{bruin2015hyperelliptic}, \"{O}zman and Siksek \cite{ozman2019quadratic}, and most recently Box \cite{box2021quadratic}; taken together, these three works give a complete determination of the quadratic points on $X_0(N)$ when it has genus $2$, $3$, $4$ or $5$. In essence, for each such $N$, there are only finitely many quadratic points which do not correspond to elliptic $\Q$-curves; and these finitely many points are determined explicitly. All of these works utilise Magma \cite{magma} in a significant way.

By combining these results with earlier work of \"{O}zman in determining local solubility of quadratic twists of $X_0(N)$ \cite{ozman2012points}, together with a fair amount of Sage and Magma computation - notably the Chabauty package - one is able to completely determine $\IsogPrimeDeg(K)$ for some $K$s, giving - conditional upon GRH - the first examples of the determination of isogeny primes for number fields larger than $\Q$.

\begin{theorem} \label{thm:main}
Assuming GRH, we have the following.
\begin{align*}
\IsogPrimeDeg(\Q(\sqrt{-10})) &= \IsogPrimeDeg(\Q)\\
\IsogPrimeDeg(\Q(\sqrt{5})) &= \IsogPrimeDeg(\Q) \cup \left\{23, 47\right\}\\
\IsogPrimeDeg(\Q(\sqrt{7})) &= \IsogPrimeDeg(\Q).
\end{align*}
\end{theorem}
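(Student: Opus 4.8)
The plan is to feed the finite superset produced by \Cref{alg:main} into a curve-by-curve study of $X_0(p)$. First I would invoke \Cref{alg:main} together with the computation recorded in \Cref{tab:put_isog_primes}. Since $\IsogPrimeDeg(\Q) \subseteq \IsogPrimeDeg(K)$ for every $K$, every prime in $\IsogPrimeDeg(\Q)$ (including $163$) is an isogeny prime for free, and the superset beyond these consists only of the primes $p \le 71$ together with the large possible isogeny primes of \Cref{tab:put_isog_primes}. It therefore suffices to decide, for each candidate prime $p \notin \IsogPrimeDeg(\Q)$, whether $X_0(p)$ carries a non-cuspidal $K$-rational point, equivalently whether some elliptic curve over $K$ admits a $K$-rational $p$-isogeny. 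The candidates are $p \in \{23,29,31,41,47,53,59,61,71\}$ for all three fields, together with $p=73$ for $\Q(\sqrt{-10})$ and $\Q(\sqrt 7)$, and $p \in \{73,79\}$ for $\Q(\sqrt 5)$.

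I would then recast the problem in terms of quadratic points. A non-cuspidal $K$-point $x=[E,C]$ on $X_0(p)$ is a quadratic point, and comparing it with the Atkin--Lehner involution $w_p$ and the generator $\sigma$ of $\Gal(K/\Q)$ yields a trichotomy: either $\sigma(x)=x$, forcing $x$ to be rational (hence a cusp, since $X_0(p)(\Q)$ consists only of cusps for $p \notin \IsogPrimeDeg(\Q)$); or $\sigma(x)=w_p(x)$, so that $x$ is the pullback of a rational point of the Atkin--Lehner quotient $X_0^+(p)=X_0(p)/w_p$; or $x$ maps to a genuine quadratic point of $X_0^+(p)$. For the candidates whose modular curve has genus between $2$ and $5$, namely $23,29,31,41,47,53,59,61$ and $73$, I would read the complete list of quadratic points off the work of Bruin--Najman, \"{O}zman--Siksek and Box, record the field of definition of each, and check which are defined over the relevant $K$ and non-cuspidal. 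This settles all of these primes simultaneously; in particular it should exhibit the genuine quadratic points on $X_0(23)$ and $X_0(47)$ defined over $\Q(\sqrt 5)$ that place $23$ and $47$ in $\IsogPrimeDeg(\Q(\sqrt 5))$, while confirming that no further non-cuspidal $K$-point occurs over any of the three fields.

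The hard part is $p=71$ and $p=79$, whose modular curves have genus $6$ and so fall outside the range in which the complete quadratic-point determination is available. Here I would work one field at a time and aim to prove directly that $X_0(p)(K)=X_0(p)(\Q)$, whence every $K$-point is a cusp. The most promising route is to compute the rank of the quadratic twist $J_0(p)^{(d)}(\Q)$, where $K=\Q(\sqrt d)$; when this rank vanishes the rank of $J_0(p)(K)$ coincides with that of $J_0(p)(\Q)$, and an \"{O}zman--Siksek style argument---embedding $X_0(p)$ in its Jacobian, running a Mordell--Weil sieve, and invoking a local obstruction supplied by \"{O}zman's study of the local solubility of quadratic twists of $X_0(p)$---should force every $K$-point to be rational and hence cuspidal. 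The principal risk is that the twisted rank fails to vanish or that the sieve does not close, in which case one falls back on bounding the rational points of the smaller-genus quotient $X_0^+(p)$ by Chabauty and computing, for each such point, the quadratic field over which its fibre in $X_0(p)$ is defined.

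Assembling the verdicts then yields the three displayed equalities: the primes of $\IsogPrimeDeg(\Q)$ require no work; among the remaining candidates only $23$ and $47$ survive, and only over $\Q(\sqrt 5)$; and everything else---$71$ and $79$ in particular---is eliminated. I expect the genus-$6$ primes $71$ and $79$ to be the genuine obstacle, since they are precisely the cases where no off-the-shelf determination of quadratic points applies and where the rank and Chabauty computations in Magma carry the full weight of the argument.
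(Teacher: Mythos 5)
Your overall architecture matches the paper's: feed the superset from \Cref{alg:main} into a prime-by-prime study of quadratic points on $X_0(p)$, leaning on Bruin--Najman, \"{O}zman--Siksek and Box. But there is a genuine gap in the middle step. For most of the candidate primes --- $23, 29, 31, 41, 47, 53, 59, 61, 71$ --- the curve $X_0(p)$ is hyperelliptic or bielliptic over a positive-rank elliptic curve, so it has \emph{infinitely many} quadratic points; the cited works classify only the finitely many \emph{exceptional} points and show that the non-exceptional ones correspond to elliptic $\Q$-curves. There is therefore no finite ``complete list of quadratic points'' to read off and check against $K$, and your claim that this ``settles all of these primes simultaneously'' does not go through. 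To rule out a non-exceptional point over a \emph{specific} field $\Q(\sqrt{d})$ one must show that the quadratic twist $X^d(p)$ has no $\Q$-rational point, and this requires a separate mechanism: the paper uses \"{O}zman's local-solubility criterion at a ramified prime (the ``\"{O}zman sieve'', \Cref{prop:oezman_sieve}) and, where no local obstruction exists, a Chabauty computation on the twist whose Jacobian has rank $0$ (``Twist-Chabauty0''), e.g.\ for $29$ and $31$. You invoke \"{O}zman's work only for $71$ and $79$, so the argument for the remaining primes is incomplete. (Only $p=73$ genuinely has finitely many quadratic points, and there your plan works as stated.)

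Two smaller points. First, $71$ is not outside the literature's reach: $X_0(71)$ is hyperelliptic, so Bruin--Najman cover it (Table 18), and the paper finishes it with the \"{O}zman sieve like the other hyperelliptic levels. Second, for $79$ the paper's appendix takes a different and cleaner route than your Mordell--Weil-sieve plan: it uses the map $x \mapsto w_{79}(x) - x$ into $J_0(79)_-$, shows $J_0(79)_-(\Q(\sqrt{5})) = J_0(79)_-(\Q) \cong \ZZ/13\ZZ$ (rank $0$ of the relevant twist plus no torsion growth), and uses gonality $>2$ to force every $K$-point to be either rational or a $w_{79}$-fixed CM point (excluded since $\Q(\sqrt{-79})$ has class number $5$). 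Your fallback of running Chabauty on $X_0^+(79)$ would fail outright, since $X_0^+(79)$ is an elliptic curve of rank $1$ with infinitely many rational points.
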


Of particular note here is the ruling out of the prime $73$, which is treated in Box's work, and relies on the determination of the rational points on the modular curve $X_0(73)^+$, recently completed in the work of Balakrishnan, Best, Bianchi, Lawrence, M\"{u}ller, Triantafillou and Vonk (Theorem 6.3 in \cite{balakrishnan2019two}) and uses the method of explicit quadratic Chabauty developed by Balakrishnan and Dogra \cite{balakrishnan2018quadratic} and Balakrishnan, Dogra, M\"{u}ller, Tuitman and Vonk \cite{balakrishnan2019explicit} following Kim's work \cite{kim2005motivic} \cite{kim2009unipotent} on non-abelian Chabauty. The arising of the primes $23$ and $47$ may be explained by the geometric condition that, in these cases, $X_0(p)$ is hyperelliptic, so necessarily admits infinitely many quadratic points; it is therefore not too surprising that these primes arise for a given quadratic field.

Our original motivation for this work was to write down $\IsogPrimeDeg(K)$ for a \emph{single $K \neq \Q$}, so we are content to stop at this point. Dealing with primes larger than $73$ may be possible by extending the methods of \"{O}zman-Siksek and Box, but currently this is the largest prime considered in their work. However, in \Cref{appendix}, written jointly with Derickx, it is proved that $79 \notin \IsogPrimeDeg(\Q(\sqrt{5}))$; this is used in the case of $\Q(\sqrt{5})$ above.

Our implementation \cite{isogeny_primes} is available at:

\begin{center}
\url{https://github.com/barinderbanwait/quadratic_isogeny_primes}
\end{center}

More details on how to use the command line tool are given on the \verb|README.md| there. In addition, the repository also contains a \emph{User's guide} which explains in more detail the Sage implementations of the algorithms, explaining the various methods and helper functions we use, as well as the PARI/GP optimisation of searching for Type 2 primes. The Magma code we use for the determination of isogeny primes up to $73$ is also presented there.

The outline of this paper is as follows. In \Cref{sec:prelims} we give an overview of Momose's isogeny types and David's more careful subsequent treatment of Momose's ideas in the case of a number field $K$ Galois over $\Q$. David's work in making explicit one of Momose's constants is also shown in this section. \Cref{sec:overview} then specialises to quadratic fields, and describes our algorithm for dealing with isogeny primes which are \emph{not} of Type 1 or Type 2. \Cref{sec:type_1_primes,sec:type_2_primes} then deal respectively with Type 1 and Type 2 primes, and \Cref{sec:parts_together} combines the results of the previous sections to present the full version of \Cref{alg:main} as well as the different parts of the $\DLMV$ bound. \Cref{thm:main} is proved in the final \Cref{sec:weeding}.

In the spirit of Mazur's introduction in \cite{mazur1978rational}, we would like to end our introduction with the following.\\

\emph{A question.} Suppose one has determined $\IsogPrimeDeg(K)$ for a particular quadratic field $K$. By considering the ``graph of $K$-rational isogenies'', can one reduce the determination of $\IsogCyclicDeg(K)$ to the determination of $K$-points on $X_0(N)$ for a finite list of composite $N$s, just as Mazur does in the introduction of his paper?

\ack{I am very grateful to David Roe and Andrew Sutherland for granting me access to MIT's \verb|legendre| computer, on which the Magma and PARI/GP computations were performed, and to Barry Mazur for a correspondence which prompted me to think beyond the computations into the underlying geometric reasons for isogenies. The copy of Magma was made available through a generous initiative of the Simons Foundation. I am indebted to Maarten Derickx for numerous suggestions on improving the implementation as well as for ideas that led to the Appendix; Andreas Enge, Aurel Page and Michael Stoll for implementation improvements; Samir Siksek for answering a question about rational points on hyperelliptic curves; James Stankewicz for bringing my attention to Momose's work; Isabel Vogt for useful conversations; and the two anonymous referees for their very careful and conscientious reading of previous versions of the manuscript and for their numerous comments and suggestions on them which not only improved the exposition, but also highlighted gaps in my previous implementation (specifically in the subtle difference between Type 2 and Momose Type 2 primes). I also thank Alex Bartel, Lassina Demb\'{e}l\'{e}, Bas Edixhoven, Gerhard Frey, Philippe Michaud-Jacobs, and John Voight for comments on an earlier version of the manuscript.

Finally I wish to express my sincere gratitude to John Cremona, who introduced me to the subject of rational isogenies, and who both encouraged and helped me in my first steps in attempting to understand the 1978 paper of Mazur, whose work more generally remains a great source of inspiration to me.
}

\section{Recap on isogeny types}\label{sec:prelims}

This section collects existing results from the literature to be used in the sequel concerning the isogeny character, as well as the identification of certain types - Type 1, Type 2, and Type 3 - of such characters. In particular, we will discuss Theorem 1/Theorem A of \cite{momose1995isogenies}, hereafter referred to as \emph{Momose's Isogeny Theorem}, in the case that $K$ is Galois over $\Q$. We largely follow David's paper \cite{david2012caractere}, whose exposition includes the reproving in greater detail of many of Momose's original results \cite{momose1995isogenies}. The reader may also wish to consult Section 5 of \cite{mazur1978rational} as well as David's PhD thesis \cite{david2008caractere}.

We thus take a number field $K$ which is Galois over $\Q$, and let $E/K$ be an elliptic curve over a number field possessing a $K$-rational $p$-isogeny for some prime $p \geq 17$, assumed unramified in $K$. Write $W$ for the kernel of the isogeny, a one-dimensional $G_K$-stable module, and denote by $\lambda$ the \emph{isogeny character}:
\[ \lambda : G_K \longrightarrow \Aut W(\overline{K}) \cong \F_p^\times.\]
Since the character $\lambda^{12}$ plays such a pivotal role in the proceedings, we denote it by $\mu$. The following result summarises the essential facts about $\mu$.

\begin{proposition}[David]\label{prop:david_isog_char}
Let $E/K$ be an elliptic curve over a number field possessing a $K$-rational $p$-isogeny for some prime $p \geq 17$, assumed unramified in $K$. Let $\lambda$ denote the isogeny character, and write $\mu = \lambda^{12}$. Write $G_K = \Gal(\overline{K}/K)$, and denote by $\chi_p$ the mod-$p$ cyclotomic character of $G_K$. Then we have the following.
\begin{enumerate}
	\item
	$\mu$ is unramified outside the primes of $K$ above $p$.
	\item
	Let $\fp$ be a prime of $K$ above $p$, and write $I_\fp$ for the corresponding inertia subgroup of $G_K$. Then there exists a unique integer $a_\fp$ valued in the set $\left\{0,4,6,8,12\right\}$ such that $\mu$ restricted to $I_\fp$ is equal to $\chi_p^{a_\fp}$.
	\item
	If $a_\fp$ = $4$, $6$ or $8$, then $E$ has potentially good reduction at $\fp$. Write $\widetilde{E}_\fp$ for the reduction of $E$ at $\fp$ over $\overline{\F_p}$.
	\item
	If $a_\fp$ = $4$ or $8$, then $\Aut(\widetilde{E}_\fp) \cong \mu_6$, $p \equiv 2 \Mod{3}$, and $j(E) \equiv 0 \Mod{\fp}$.
	\item
	If $a_\fp$ = $6$, then $\Aut(\widetilde{E}_\fp) \cong \mu_4$, $p \equiv 3 \Mod{4}$, and $j(E) \equiv 1728 \Mod{\fp}$.
\end{enumerate}
\end{proposition}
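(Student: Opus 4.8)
The plan is to read off all five assertions from the behaviour of $\lambda$ on inertia subgroups, treating the primes $\fq \nmid p$ separately from the primes $\fp \mid p$, and throughout exploiting that twelfth power is exactly what clears the tame contributions, whose order divides $\lcm(1,2,3,4,6)=12$. First I would recall that $\lambda$ is cut out of the Galois action on $W \subset E[p]$, and that by the Weil pairing $\lambda \cdot \lambda' = \chi_p$ on $E[p]/W$; the whole proof is then a reduction-type analysis of $\lambda|_{I_\fq}$ for varying $\fq$.

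For (1), I would fix $\fq \nmid p$. If $E$ has good reduction at $\fq$ then $E[p]$, hence $\lambda$, is unramified there by N\'{e}ron--Ogg--Shafarevich, and by the semistable reduction theorem the only other possibilities are potentially multiplicative and potentially good reduction. In the potentially multiplicative case the Tate-curve description identifies the unique $I_\fq$-stable line with $\mu_p$ up to an unramified quadratic twist, so $\lambda|_{I_\fq}$ is $\chi_p|_{I_\fq}$ times a character of order at most $2$; since $\chi_p$ is unramified at $\fq \nmid p$, the twelfth power $\lambda^{12}|_{I_\fq}$ is trivial. In the potentially good case there is an extension $L/K_\fq$ of degree $e \in \{2,3,4,6\}$ over which $E$ acquires good reduction, so $\lambda|_{I_L}$ is trivial and $\lambda|_{I_\fq}$ has order dividing $e \mid 12$; again $\lambda^{12}|_{I_\fq}$ is trivial. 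This gives (1).

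For (2), fix $\fp \mid p$; since $p$ is unramified in $K$ the completion is unramified over $\Q_p$. Passing to tame inertia and invoking Serre's theory of the fundamental characters governing the inertial action on $E[p]$, the restriction $\lambda|_{I_\fp}$ is a power $\chi_p^{t}$ of the level-one fundamental character with $12t \in \ZZ$. A case analysis of the reduction type -- ordinary or potentially multiplicative yielding integral weight $t \in \{0,1\}$, and potentially good reduction with an automorphism-twist yielding $t \in \{1/3,1/2,2/3\}$ -- shows $12t \in \{0,4,6,8,12\}$, so $\mu|_{I_\fp} = \chi_p^{12t} = \chi_p^{a_\fp}$. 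Uniqueness of $a_\fp$ holds because $\chi_p$ has exact order $p-1 > 12$ as $p \geq 17$, so the five candidate exponents give five distinct characters of $I_\fp$.

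Finally, for (3)--(5) I would interpret the fractional weights. The values $t \in \{1/3,1/2,2/3\}$, i.e. $a_\fp \in \{4,6,8\}$, cannot occur in the ordinary or potentially multiplicative cases (which give only $a_\fp \in \{0,12\}$), so $E$ must have potentially good reduction at $\fp$, proving (3). The denominator of $t$ records the order of the root of unity in $\Aut(\widetilde{E}_\fp)$ twisting the fundamental character: denominator $3$ (so $a_\fp \in \{4,8\}$) forces $\mu_6 \subseteq \Aut(\widetilde{E}_\fp)$, whence $\Aut(\widetilde{E}_\fp) \cong \mu_6$ and $j(E) \equiv 0 \Mod{\fp}$; denominator $2$ (so $a_\fp = 6$) forces $\Aut(\widetilde{E}_\fp) \cong \mu_4$ and $j(E) \equiv 1728 \Mod{\fp}$. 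Moreover a fractional weight arises only when $\widetilde{E}_\fp$ is supersingular, since the ordinary case yields integral weight; as $j=0$ is supersingular exactly when $p \equiv 2 \Mod 3$ and $j=1728$ exactly when $p \equiv 3 \Mod 4$, these congruences follow, completing (4) and (5). The hard part will be (2): pinning down the exact inertia weight at $\fp$ in the potentially good supersingular cases, where one must distinguish level-one from level-two fundamental characters and explain precisely why a $G_K$-stable line can exist only at $j \equiv 0, 1728$; this rests on a careful application of Serre's formalism rather than on any routine computation.
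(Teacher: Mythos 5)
Your proposal follows essentially the same route as the paper, which likewise reduces everything to Serre's analysis of the inertia action on $E[p]$ (via David's Propositions 1.2--1.5 of \cite{david2012caractere}): your ``weight'' $t$ is the paper's $r_\fp/e_\fp$, and both accounts defer the combinatorial core of (2) --- pinning down which exponents actually occur in the potentially good supersingular case --- to the congruence condition worked out by Serre and David. The sketch is correct; the only point left implicit is that in the potentially multiplicative case the kernel line may map isomorphically onto the \'etale quotient rather than coincide with $\mu_p$, but the conclusion $\lambda^{12}|_{I_\fq}=1$ is unaffected.
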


\begin{proof}
(1) is explained as Propositions 1.4 and 1.5 in \cite{david2012caractere} (see also Propositions 3.3 and 3.5 in \cite{david2011borne}). (2) is Propositions 1.2 and 1.3 in \cite{david2012caractere} (see also Proposition 3.2 in \cite{david2011borne}). We briefly sketch some of the details.

This result is obtained by building on the precise description of the action of $I_\fp$ on the $p$-torsion Galois module $E[p]$ found in Section 1 of \cite{SerreGal}. Having dealt with the case of $E$ having potentially multiplicative reduction at $\fp$ (in which case one can conclude that $a_\fp$ must be either $0$ or $12$, which in particular gives (3)), one obtains a minimal extension $K_\fp'$ of the local field $K_\fp$ over which $E$ has good reduction, and one denotes by $e_\fp$ the ramification degree of this extension. As explained in the proof of Lemme 2.4 of \cite{david2011borne} (whose explanation goes further back to the proof of Theorem 2 of \cite{serretate}) the integer $e_\fp$ must divide the order of the automorphism group $\Aut(\widetilde{E}_\fp)$ (over $\overline{\F_p}$) of the reduction of $E/K_\fp'$, and thus (since $p$ is assumed to be greater than $13$ and hence not equal to $2$ or $3$) one obtains that $e_\fp$ must be in the set $\left\{1,2,3,4,6\right\}$, and moreover:

\begin{itemize}
	\item
	if $e_\fp$ = $3$ or $6$, then $\Aut(\widetilde{E}_\fp) \cong \mu_6$ and $j(E) \equiv 0 \Mod{\fp}$;
	\item
	if $e_\fp$ = $4$, then $\Aut(\widetilde{E}_\fp) \cong \mu_4$ and $j(E) \equiv 1728 \Mod{\fp}$.
\end{itemize}

As explained in the proof of Proposition 3.2 of \cite{david2011borne}, the integer $a_\fp$ is then obtained as $(12 / e_\fp) \cdot r_\fp$ for $r_\fp$ an integer between $0$ and $e_\fp$ which moreover must satisfy a certain congruence condition (labelled $\star$ in \emph{loc. cit.}). Running through the possible values one concludes that $a_\fp$ must lie in the set $\left\{0,4,6,8,12\right\}$, and moreover (see the summary table at the end of page 2184 of \emph{loc. cit.}) on obtains the extra conditions on $p$ and $\Aut(\widetilde{E}_\fp)$ in the $a_\fp$ = $4$, $6$ and $8$ cases expressed in (4) and (5).
\end{proof}

\begin{remark}
Similar conditions on $p$ and $j(E)$ arise in the description of the bad fibres of the N\'{e}ron model of $J_0(N)$ in the Appendix of \cite{Mazur3}, by Mazur and Rapoport. Moreover, strictly speaking, since the integers $a_\fp$ give the exponent of $\chi_p$ of $\mu$ restricted to $I_\fp$, they are only defined modulo $p - 1$. However, since $p \geq 17$, we may take them to be actual integers.
\end{remark}

Write $G := \Gal(K/\Q)$. We fix a prime $\fp_0$ of $K$ above $p$; this choice allows us to define, for $\tau \in G$, $a_\tau$ to be the integer $a_\fp$ corresponding to the ideal $\fp = \tau^{-1}(\fp_0)$. Taken together, these integers $(a_\tau)_{\tau \in G}$ describe how $\mu$, when identified (via class field theory) with a character on the group $I_K(p)$ of ideals of $K$ coprime to $p$, acts on principal ideals $(\alpha)$ of $K$ coprime to $p$; this is expressed as Lemma 1 of \cite{momose1995isogenies}. However, we will follow David and \emph{not} identify $\lambda$ in this way (that is, for us, $\lambda$ will remain a Galois character). David's version of Momose's Lemma 1 is then Proposition 2.6 of \cite{david2012caractere} (see also Corollaire 2.2.2 of \cite{david2008caractere}), expressed via a \emph{twisted norm element}, which David writes as
\[ \mathcal{N}(\alpha) = \prod_{\tau \in G}\tau(\alpha)^{a_\tau};\]
however we prefer to use Momose's approach of packaging the $(a_\tau)_{\tau \in G}$ into an element $\eps$ of the group ring $\ZZ[G]$ acting on $K^\times$, viz. $\eps = \sum_{\tau \in G}a_\tau\tau$. Following Freitas and Siksek in \cite{freitas2015criteria} (see the discussion just before Proposition 2.2 in \emph{loc. cit.}) we henceforth refer to $\eps$ as the \textbf{signature of the isogeny character $\lambda$}, or sometimes simply the \textbf{isogeny signature}. David's twisted norm $\mathcal{N}(\alpha)$ is then expressed as $\alpha^\eps$ (the action of $\eps$ on $\alpha$), which makes explicit the dependence on the integers $a_\tau$. The aforementioned result is then expressed as follows, where $\iota_{\fp_0}$ denotes the inclusion of $K$ into the completion $K_{\fp_0}$, and $\sigma_\fq$ denotes the Frobenius automorphism at $\fq$.

\begin{lemma}[Momose, Lemma~1 of \cite{momose1995isogenies}, Proposition 2.6 of \cite{david2012caractere}]\label{lem:momose_lemma_1}
Assume that $K$ is Galois over $\Q$, and that $p$ is unramified in $K$. Let $\alpha \in K^\times$ coprime to $p$, and let $\prod_{\fq \nmid p}\fq^{\ord_\fq(\alpha)}$ denote the prime ideal decomposition of the principal ideal $\alpha\mathcal{O}_K$. Then one has
\[ \prod_{\fq \nmid p}\mu(\sigma_\fq)^{\ord_\fq(\alpha)} = \iota_{\fp_0}(\alpha^\eps) \Mod{\fp_0}.\]
\end{lemma}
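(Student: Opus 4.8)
The plan is to reinterpret $\mu$ as a character on the idele class group via global class field theory, and then to read off the claimed identity from the product formula (Artin reciprocity applied to the principal idele attached to $\alpha$). Throughout I use part (1) of \Cref{prop:david_isog_char}, which guarantees that $\mu$ is unramified outside the primes above $p$, so that $\mu$ (being valued in the abelian group $\F_p^\times$) factors through $G_K^{\mathrm{ab}}$ and corresponds, via the arithmetic reciprocity map $\mathbb{A}_K^\times/K^\times \to G_K^{\mathrm{ab}}$, to a finite-order Hecke character $\tilde\mu = \prod_v \tilde\mu_v$ whose local component $\tilde\mu_\fq$ is trivial on $\O_{K_\fq}^\times$ for every $\fq \nmid p$.

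First I would record the local components. For a prime $\fq \nmid p$, the unramified character $\tilde\mu_\fq$ sends a uniformiser to $\mu(\sigma_\fq)$, so that $\tilde\mu_\fq(\iota_\fq(\alpha)) = \mu(\sigma_\fq)^{\ord_\fq(\alpha)}$; taking the product over all such $\fq$ recovers exactly the left-hand side of the lemma. For a prime $\fp \mid p$, part (2) of \Cref{prop:david_isog_char} gives $\mu|_{I_\fp} = \chi_p^{a_\fp}$. The key input here is the local class field theory description of the mod-$p$ cyclotomic character: under the (arithmetic) local reciprocity map, $\chi_p$ restricted to $\O_{K_\fp}^\times$ is the character $u \mapsto \overline{N_{K_\fp/\Q_p}(u)}^{-1}$, where the bar denotes reduction modulo $p$. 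Hence $\tilde\mu_\fp$ agrees on units with $u \mapsto \overline{N_{K_\fp/\Q_p}(u)}^{-a_\fp}$. Since $\alpha$ is assumed coprime to $p$, the element $\iota_\fp(\alpha)$ is a unit at every $\fp \mid p$, so the (possibly nontrivial) value of $\tilde\mu_\fp$ on a uniformiser never enters, and $\tilde\mu_\fp(\iota_\fp(\alpha)) = \overline{N_{K_\fp/\Q_p}(\iota_\fp(\alpha))}^{-a_\fp}$. Finally, at each archimedean place the local component is trivial: this is precisely where the passage from $\lambda$ to $\mu = \lambda^{12}$ is used, since complex conjugation $c$ satisfies $\lambda(c) = \pm 1$ and therefore $\mu(c) = \lambda(c)^{12} = 1$.

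Assembling these via the product formula $\prod_v \tilde\mu_v(\alpha) = 1$ for the principal idele of $\alpha$, the archimedean terms drop out and I obtain
\[ \prod_{\fq \nmid p}\mu(\sigma_\fq)^{\ord_\fq(\alpha)} \cdot \prod_{\fp \mid p}\overline{N_{K_\fp/\Q_p}(\iota_\fp(\alpha))}^{-a_\fp} = 1, \]
so that the left-hand side of the lemma equals $\prod_{\fp \mid p}\overline{N_{K_\fp/\Q_p}(\iota_\fp(\alpha))}^{a_\fp}$. It remains to identify this last product with $\iota_{\fp_0}(\alpha^\eps) \bmod \fp_0$.

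This final identification is pure Galois combinatorics, using that $K/\Q$ is Galois with $p$ unramified. Writing $G = \Gal(K/\Q)$, the primes above $p$ are exactly the $\fp = \tau^{-1}(\fp_0)$, and the set of $\tau \in G$ giving a fixed such $\fp$ is the coset $\tau_\fp D_\fp$, where $D_\fp$ is the decomposition group and $\tau_\fp$ is any element with $\tau_\fp(\fp) = \fp_0$; moreover all these $\tau$ share the same exponent $a_\tau = a_\fp$ by definition of the signature. Grouping the product $\alpha^\eps = \prod_{\tau \in G}\tau(\alpha)^{a_\tau}$ accordingly and reducing modulo $\fp_0$, the inner product over $\delta \in D_\fp$ of $\tau_\fp\delta(\alpha)$ reduces to $\tau_\fp\bigl(N_{k_\fp/\F_p}(\alpha \bmod \fp)\bigr)$, since $D_\fp$ surjects onto $\Gal(k_\fp/\F_p)$ in the unramified case. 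As the norm $N_{k_\fp/\F_p}(\alpha \bmod \fp)$ already lies in the prime field $\F_p$, the residue-field isomorphism $\tau_\fp \colon k_\fp \to k_{\fp_0}$ fixes it, and it equals $\overline{N_{K_\fp/\Q_p}(\iota_\fp(\alpha))}$. Taking the product over $\fp \mid p$ with exponents $a_\fp$ then yields exactly $\prod_{\fp \mid p}\overline{N_{K_\fp/\Q_p}(\iota_\fp(\alpha))}^{a_\fp}$, completing the identification. I expect the main obstacle to be the careful bookkeeping of normalisations in the local class field theory step — in particular pinning down the cyclotomic character's local reciprocity description with the correct inverse — so that the sign contributions at $\fp \mid p$ and at $\fq \nmid p$ are consistent across the product formula and reproduce the stated formula with positive exponents $a_\tau$.
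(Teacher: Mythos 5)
Your argument is correct: the identification of $\mu$ with a finite-order idele class character, the computation of the local components (trivial on units away from $p$, equal to $u\mapsto\overline{N_{K_\fp/\Q_p}(u)}^{-a_\fp}$ on units at $\fp\mid p$ with the arithmetic normalisation, trivial at infinity since $\mu=\lambda^{12}$), the product formula, and the regrouping of $\alpha^\eps=\prod_\tau\tau(\alpha)^{a_\tau}$ into norms over the cosets $\tau_\fp D_\fp$ all check out, and you correctly flag the cyclotomic-character normalisation as the one delicate sign. The paper itself gives no proof of this lemma --- it is quoted from Momose (Lemma 1) and David (Proposition 2.6) --- and your class-field-theoretic derivation is essentially the argument given in those references, so nothing further is needed.
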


Note that if one of the integers in the signature $\eps$ is $4$ or $8$ (respectively $6$), then from the above discussion of the integers $a_\fp$, we have that $\Aut(\widetilde{E}_\fp) \cong \mu_6$ and $p \equiv 2 \Mod{3}$ (respectively $\Aut(\widetilde{E}_\fp) \cong \mu_4$ and $p \equiv 3 \Mod{4}$). We therefore refer to such signatures as \textbf{sextic} (respectively \textbf{quartic}) by reference to the size of $\Aut(\widetilde{E}_\fp)$. The overlap of these two designations (i.e. the signature contains both $4$ and $6$ or both $8$ and $6$) is an interesting source of isogenies, and will be referred to as \textbf{mixed}. Note also that if $p$ is inert or totally ramified in $K$, then there is only one prime ideal of $K$ above $p$, and thus all of the integers in the signature must be the same; we call such as signature \textbf{constant}. We observe (for later recollection) that a different choice of $\fp_0$ merely permutes the integers in the signature, and that we may consider signatures - \emph{a priori} of the existence of any $p$-isogeny - merely as elements of the set $\left\{0,4,6,8,12\right\}^G$. It is in fact this last observation which allows us to bound prime-degree-isogenies having an isogeny character of signature $\eps$, and when we want to stress that we are considering $\eps$ in this way, we will merely consider it as an element of $\left\{0,4,6,8,12\right\}^G$, or say that $\eps$ is a \emph{possible} isogeny signature.

Let $\fq$ be a prime ideal of $K$ which is coprime to $p$, and consider the reduction of $E$ at $\fq$, which is either potentially multiplicative, potentially good supersingular, or potentially good ordinary. In both of the potentially good cases, the characteristic polynomial of Frobenius $\sigma_\fq$ acting on the $p$-adic Tate module of $E$ has coefficients in $\ZZ$ and is independent of $p$ (Theorem 3 in \cite{serretate}); we may thus write $P_\fq(X)$ for this polynomial. This is a quadratic polynomial whose roots have absolute value $\sqrt{\Nm(\fq)}$. We write $L^\fq$ for the splitting field of this polynomial, which is either $\Q$ or an imaginary quadratic field.

In all cases of reduction at $\fq$, one obtains congruence conditions modulo $p$ on $\mu(\sigma_\fq)$, which subsequently yield divisibility conditions on $p$. In order to express these we make the following definition.

\begin{definition}\label{def:ABC}
Let $K/\Q$ be a Galois number field, $\fq$ a prime ideal of $K$ of order $h_\fq$ in the class group of $K$, and $\gamma_\fq$ a generator of the principal ideal $\fq^{h_\fq}$. Then we define the following integers.
\begin{align*}
A(\eps, \fq) &:= \Nm_{K/\Q}(\gamma_\fq^\eps - 1)\\
B(\eps, \fq) &:= \Nm_{K/\Q}(\gamma_\fq^\eps - \Nm(\fq)^{12h_\fq})\\
C_s(\eps, \fq) &:= \lcm(\left\{ \Nm_{K(\beta)/\Q}(\gamma_\fq^\eps - \beta^{12h_\fq}) \ | \ \beta \mbox{ is a supersingular Frobenius root over }\F_\fq\right\})\\
C_o(\eps, \fq) &:= \lcm(\left\{ \Nm_{K(\beta)/\Q}(\gamma_\fq^\eps - \beta^{12h_\fq}) \ | \ \beta \mbox{ is an ordinary Frobenius root over }\F_\fq\right\})\\
C(\eps, \fq) &:= \lcm\left(C_o(\eps, \fq), C_s(\eps, \fq)\right)\\
ABC(\varepsilon, \fq) &:= \lcm(A(\varepsilon, \fq), B(\varepsilon, \fq), C(\varepsilon, \fq), \Nm(\fq)),
\end{align*}
where in $C_s(\eps, \fq)$ (respectively $C_o(\eps, \fq)$)  the $\lcm$ is taken over all roots $\beta$ of characteristic polynomials of Frobenius of supersingular (respectively, ordinary) elliptic curves defined over the residue field $\F_\fq$ of $K$ at $\fq$.
\end{definition}

The divisibility conditions are then expressed as follows.

\begin{proposition}\label{prop:ABC_div}
Let $E/K$ be an elliptic curve over a number field $K$ which admits a $K$-rational $p$-isogeny of isogeny signature $\eps$. Let $\fq$ be a prime of $K$ coprime to $p$.
\begin{enumerate}
	\item
	If $E$ has potentially multiplicative reduction at $\fq$, then $p$ divides $A(\varepsilon, \fq)$ or $B(\varepsilon, \fq)$.
	\item
	If $E$ has potentially good ordinary (respectively, supersingular) reduction at $\fq$, then $p$ divides $C_o(\varepsilon, \fq)$ (respectively, $C_s(\varepsilon, \fq)$).
\end{enumerate}
\end{proposition}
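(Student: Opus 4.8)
\emph{The plan} is to apply Momose's Lemma (\Cref{lem:momose_lemma_1}) to the single element $\alpha = \gamma_\fq$, whose principal ideal factors as $\gamma_\fq\O_K = \fq^{h_\fq}$. Since $\fq$ is coprime to $p$, this $\alpha$ is coprime to $p$ and $\fq$ is the only prime occurring in its factorisation, so the left-hand product collapses to a single factor and the lemma yields
\[ \mu(\sigma_\fq)^{h_\fq} \equiv \iota_{\fp_0}(\gamma_\fq^\eps) \Mod{\fp_0}. \]
Everything then reduces to identifying $\mu(\sigma_\fq) = \lambda(\sigma_\fq)^{12}$ in terms of the reduction type of $E$ at $\fq$. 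I would first record that $\mu(\sigma_\fq)$ is well-defined, since $\mu$ is unramified outside $p$ by \Cref{prop:david_isog_char}(1), even in cases where $\lambda$ itself ramifies at $\fq$.

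First I would treat the potentially good case. Because $\fq \nmid p$, reduction modulo a prime above $\fq$ is an isomorphism on $p$-torsion, so the $G_K$-stable line $W$ reduces to a $\sigma_\fq$-eigenspace in $\widetilde E_\fq[p]$. The characteristic polynomial of $\sigma_\fq$ on the Tate module is $P_\fq(X)$, and its reduction modulo $p$ is the characteristic polynomial on $\widetilde E_\fq[p]$; hence $\lambda(\sigma_\fq)$ is the reduction modulo $\fp_0$ of one of the roots $\beta$ of $P_\fq$, and $\mu(\sigma_\fq) \equiv \beta^{12} \pmod{\fp_0}$. Substituting into the displayed congruence gives $\gamma_\fq^\eps \equiv \beta^{12h_\fq} \pmod{\fp_0}$, so $\fp_0$ divides $\gamma_\fq^\eps - \beta^{12h_\fq}$ in $\O_{K(\beta)}$ and therefore $p \mid \Nm_{K(\beta)/\Q}(\gamma_\fq^\eps - \beta^{12h_\fq})$. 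As $\beta$ is a Frobenius root of an ordinary (resp. supersingular) curve over $\F_\fq$ exactly when $E$ has potentially good ordinary (resp. supersingular) reduction, this norm is one of the terms in the $\lcm$ defining $C_o(\eps,\fq)$ (resp. $C_s(\eps,\fq)$), proving (2).

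For the potentially multiplicative case I would invoke the Tate-curve description: over the (at most quadratic) twist on which $E$ acquires multiplicative reduction, $E[p]$ sits in an extension $\mu_p \to E[p] \to \ZZ/p$, on which Frobenius acts by $\Nm(\fq)$ and by $1$ respectively. Thus $\lambda$ restricted to the decomposition group at $\fq$ is either $\psi$ or $\psi\chi_p$ for a quadratic character $\psi$; raising to the twelfth power kills $\psi$ and gives $\mu(\sigma_\fq) \equiv 1$ or $\Nm(\fq)^{12} \pmod{\fp_0}$. Feeding these two possibilities into the displayed congruence yields either $\gamma_\fq^\eps \equiv 1$ or $\gamma_\fq^\eps \equiv \Nm(\fq)^{12h_\fq}$ modulo $\fp_0$, whence $p$ divides $A(\eps,\fq) = \Nm_{K/\Q}(\gamma_\fq^\eps - 1)$ or $B(\eps,\fq) = \Nm_{K/\Q}(\gamma_\fq^\eps - \Nm(\fq)^{12h_\fq})$, proving (1).

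The main obstacle is the clean bookkeeping of Frobenius eigenvalues under ramification: one must ensure that it is $\mu(\sigma_\fq)$, rather than the possibly ill-defined $\lambda(\sigma_\fq)$, that enters the congruence, and that passing to the minimal extension realising good (resp. multiplicative) reduction does not disturb the identification of $\mu(\sigma_\fq)$ with $\beta^{12}$ (resp. with $1$ or $\Nm(\fq)^{12}$). This is exactly the Serre--Tate local analysis already used in the proof of \Cref{prop:david_isog_char}, so I would lean on that and on \Cref{lem:momose_lemma_1} rather than re-deriving the local representation theory from scratch.
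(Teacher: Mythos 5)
Your proposal is correct and follows essentially the same route as the paper: apply \Cref{lem:momose_lemma_1} to $\gamma_\fq$ to get $\mu(\sigma_\fq)^{h_\fq} \equiv \iota_{\fp_0}(\gamma_\fq^\eps) \Mod{\fp_0}$, then combine with the local description of $\mu(\sigma_\fq)$ according to reduction type ($1$ or $\Nm(\fq)^{12}$ in the potentially multiplicative case, $\beta^{12}$ for a Frobenius root $\beta$ in the potentially good case). The only difference is that where you sketch the Tate-curve and reduction-of-torsion arguments, the paper simply cites Propositions 1.4 and 1.8 of David's paper for those local facts.
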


\begin{proof}
We apply \Cref{lem:momose_lemma_1} to $\gamma_\fq$ as defined in \Cref{def:ABC} and obtain
\begin{equation}\label{eq:mu_div}
\mu(\sigma_\fq)^{h_\fq} = \iota_{\fp_0}(\gamma_\fq^\eps) \Mod{\fp_0}.
\end{equation}
If $E$ has potentially multiplicative reduction at $\fq$, then $\mu(\sigma_\fq)$ is either $1$ or $\Nm(\fq)^{12} \Mod{p}$ (Proposition 1.4 in \cite{david2012caractere}, Proposition 3.3 in \cite{david2011borne}). Combining this with \Cref{eq:mu_div} yields (1).

If $E$ has potentially good reduction at $\fq$, then for $\mathcal{P}^\fq$ a prime of $L^\fq$ above $p$, the images of the roots of $P_\fq(X)$ in $\mathcal{O}_{L^\fq}/\mathcal{P}^\fq$ are in $\F_p^\times$, and there is a root $\beta_\fq$ of $P_\fq(X)$ such that $\mu(\sigma_\fq) = \beta_\fq^{12} \Mod{\mathcal{P}^\fq}$ (Proposition 1.8 in \cite{david2012caractere}, Proposition 3.6 in \cite{david2011borne}). Combining this with \Cref{eq:mu_div} yields (2).
\end{proof}

Loosely, \Cref{prop:ABC_div} says that the integers $A$, $B$, $C_o$ and $C_s$ may be considered as ``multiplicative nets'' to capture isogeny primes $p$ of signature $\eps$ for which the elliptic curve $E$ has a certain type of reduction at the prime $\fq$. We have defined $C$ because we will sometimes not be overly concerned with the distinction between potentially good ordinary and potentially good supersingular reduction types.

\begin{remark}\label{rem:order_in_class_group}
Momose's original definition of these integers used the class number $h_K$ instead of the order $h_\fq$ of $\fq$ in the class group $\Cl_K$, and this is how we initially implemented these integers in our program. Using $h_\fq$ instead of $h_K$ yields smaller integers and therefore faster runtime in the ensuing algorithm; this is now the implementation in the current version, to be described in \Cref{sec:parts_together}. We are grateful to Maarten Derickx and Aurel Page for independently suggesting this improvement to us.
\end{remark}

One is therefore led to consider the case that one or more of these integers is zero, since clearly this would not yield any meaningful multiplicative bound on $p$. The consideration of such cases yields the following key result.

\begin{proposition}[David, Momose]\label{prop:david_prop_215}
Let $q$ be a rational prime which splits completely in $K$, and let $\fq$ be a prime of $K$ over $q$. Let $h_\fq$ be the order of $\fq$ in $\Cl_K$, and let $\gamma_\fq$ be a generator of the principal ideal $\fq^{h_\fq}$. If the condition shown in the left-most column of \Cref{tab:david_types} is satisfied, then the corresponding assertions in the rest of the table hold.
\end{proposition}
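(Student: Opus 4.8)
The plan is to read off each row of \Cref{tab:david_types} from the single congruence produced by applying \Cref{lem:momose_lemma_1} to the element $\gamma_\fq$, combined with the trichotomy of reduction types of $E$ at $\fq$. Since $\gamma_\fq$ generates $\fq^{h_\fq}$ and the only prime in the support of its principal ideal is $\fq$ itself, \Cref{lem:momose_lemma_1} collapses to the single identity
\[ \mu(\sigma_\fq)^{h_\fq} = \iota_{\fp_0}(\gamma_\fq^\eps) \Mod{\fp_0}, \]
which is exactly \Cref{eq:mu_div}. Because $q$ splits completely in $K$, the residue field at $\fq$ is $\F_q$, so $\Nm(\fq) = q$ and the reduction $\widetilde{E}_\fq$ is (after base change) an elliptic curve over $\F_q$ whose Frobenius roots $\beta$ satisfy $\beta\overline{\beta} = q$.

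First I would run the trichotomy of \Cref{prop:ABC_div}. If $E$ has potentially multiplicative reduction at $\fq$, then $\mu(\sigma_\fq)$ is $1$ or $q^{12} \Mod{p}$ by \Cref{prop:david_isog_char}, so the displayed identity forces $\gamma_\fq^\eps \equiv 1$ or $\gamma_\fq^\eps \equiv q^{12 h_\fq} \Mod{\fp_0}$; if instead $E$ has potentially good reduction, then $\mu(\sigma_\fq) = \beta_\fq^{12} \Mod{\mathcal{P}^\fq}$ for a Frobenius root $\beta_\fq$ and a prime $\mathcal{P}^\fq$ of $L^\fq$ above $p$, so $\gamma_\fq^\eps \equiv \beta_\fq^{12 h_\fq}$ modulo a prime above $p$. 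These are exactly the three alternatives indexing the rows of the table, and in each branch I would read the corresponding assertion—namely that $p$ divides $A(\eps,\fq)$, $B(\eps,\fq)$, or $C_o(\eps,\fq)$ (respectively $C_s(\eps,\fq)$), together with the stated conclusion on the reduction type of $E$ and on the splitting field $L^\fq$—directly off the congruence and \Cref{def:ABC}.

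The step I expect to be the crux is turning each mod-$\fp_0$ congruence into the clean conclusion recorded in the table. For the multiplicative rows this is precisely where the complete splitting of $q$ does the work: the Galois conjugates $\tau(\fq)$ are $[K:\Q]$ distinct primes permuted simply transitively by $G$, so comparing the ideal factorisation of $\gamma_\fq^\eps = \prod_{\tau \in G} \tau(\gamma_\fq)^{a_\tau}$ against that of $1$ or of $\Nm(\fq)^{12 h_\fq}$ rigidly constrains the signature $\eps$ (for instance $\gamma_\fq^\eps$ being a unit forces every $a_\tau = 0$, since the exponents $a_\tau \geq 0$ attach to distinct primes), and this rigidity is what promotes the congruence to the integer-level vanishing of $A$ or $B$. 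For the potentially good rows the comparison must instead be carried out in the compositum $K(\beta_\fq) = K\cdot L^\fq$, tracking a prime above $p$ there; the delicate point is that $\beta_\fq$ lives in the imaginary quadratic field $L^\fq$ rather than in $K$, so one must control how $L^\fq$ sits relative to $K$ in order to extract the stated conclusion—and, in the supersingular case, to recover the congruence conditions on $p$ supplied by \Cref{prop:david_isog_char}(4),(5). Assembling these case analyses row by row then yields all the assertions of the table.
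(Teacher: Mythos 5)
There is a genuine structural problem: you have inverted the logical direction of the proposition and, in doing so, largely proved a different statement. The hypothesis of \Cref{prop:david_prop_215} is that one of the integers $A(\eps,\fq)$, $B(\eps,\fq)$, $C_s(\eps,\fq)$, $C_o(\eps,\fq)$ vanishes \emph{as an integer}; since these are norms, this is equivalent to the exact identity $\gamma_\fq^\eps = 1$, $q^{12h_\fq}$, or $\beta^{12h_\fq}$ in $\overline{\Q}$ --- no elliptic curve, no reduction type, and no congruence modulo $\fp_0$ enters the hypothesis at all. The conclusion to be proved is the determination of the signature $(a_\tau)$, of the field $\Q(\beta)$, and of the everywhere-unramified character. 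Your first two paragraphs instead run the trichotomy of reduction types of $E$ at $\fq$ and aim to conclude that ``$p$ divides $A(\eps,\fq)$, $B(\eps,\fq)$, or $C_o(\eps,\fq)$ (respectively $C_s(\eps,\fq)$)'' --- but that is the statement of \Cref{prop:ABC_div}, which is proved separately and is not what the table asserts. Likewise, saying the rigidity of the ideal factorisation ``promotes the congruence to the integer-level vanishing of $A$ or $B$'' is backwards: the integer-level vanishing is the given, and nothing needs promoting.

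The correct argument starts from, say, $\gamma_\fq^\eps = 1$ and compares ideal factorisations: $\O_K = \bigl(\prod_{\tau \in G}\tau(\fq)^{a_\tau}\bigr)^{h_\fq}$ with the $\tau(\fq)$ distinct (here complete splitting is used), forcing all $a_\tau = 0$, whence $\mu$ is everywhere unramified since $\mu|_{I_\fp} = \chi_p^{a_\fp}$ and $\mu$ is unramified away from $p$; similarly for $B$. Your third paragraph does contain this idea for the first two rows, but the substantial work of the last two rows is missing. When $\gamma_\fq^\eps = \beta^{12h_\fq}$ one must split according to whether $\gamma_\fq^\eps$ is rational: if so, $\beta = \zeta\bar{\beta}$ for a root of unity in the imaginary quadratic field $\Q(\beta)$, so $\beta^{12} = \pm q^6$, all $a_\tau = 6$, and then an argument via ramification of $q$ in $\Q(\beta)$, the Hasse bound, and $q \mid T_\fq$ shows $T_\fq = 0$ (or the exceptional $q=3$ case), giving $\Q(\beta) = \Q(\sqrt{-q})$ and $\gamma_\fq^\eps = q^{6h_\fq}$; if not, then $\Q(\beta) \subseteq K$, the $a_\tau$ are constant on cosets of $\Gal(K/\Q(\beta))$, and the ideal equation forces $\Nm_{K/\Q(\beta)}(\fq) = (\beta)$ or $(\bar{\beta})$ with the $a_\tau$ taking values $0$ and $12$ on the two cosets. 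None of this is in your sketch beyond the remark that one ``must control how $L^\fq$ sits relative to $K$,'' and the appeal to \Cref{prop:david_isog_char}(4),(5) in the supersingular case is not the relevant input.
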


{\small
\begin{table}[htp]
\begin{center}
\begin{tabular}{|c|c|c|c|c|c|c|}
\hline
Condition & $\gamma_\fq^\eps$ & $\Q(\beta)$ & $\gamma_\fq^\eps \in \Q$? & $(a_\tau)_{\tau \in G}$ & \thead{Everywhere \\ unramified\\ character} & \thead{Signature \\ Type}\\
\hline
$A(\eps, \fq) = 0$ & $1$ & & \multirow{4}{*}{Yes} & All $0$ & $\mu$ & \multirow{2}{*}{Type 1}\\\cline{1-2} \cline{5-6}
$B(\eps, \fq) = 0$ & $q^{12h_\fq}$ & & & All $12$ & $\mu/\chi_p^{12}$ & \\
\cline{1-3} \cline{5-7}
\multirow{2}{*}{$C_s(\eps, \fq) = 0$} & \multirow{2}{*}{$q^{6h_\fq}$} & \multirow{2}{*}{$\Q(\sqrt{-q})$} & & \multirow{2}{*}{All $6$} & \multirow{2}{*}{$\mu/\chi_p^{6}$} & \multirow{2}{*}{Type 2}\\
& & & & & &\\\hline
\multirow{6}{*}{$C_o(\eps, \fq) = 0$} & \multirow{6}{*}{\makecell{$\beta^{12h_\fq}$ for $\beta$ \\ an ordinary \\ Frobenius root \\ over $\F_\fq$}} & \multirow{6}{*}{\makecell{$\Q(\beta) \subseteq k$, \\ $\Nm_{K/\Q(\beta)}(\fq) = $ \\ $(\beta)$ or $(\overline{\beta})$}} & \multirow{6}{*}{No} & \multirow{3}{*}{\makecell{$a_\tau = 12$ for \\ $\tau \in \Gal(K/{\Q(\beta))}$; \\ $0$ otherwise}} & & \multirow{6}{*}{Type 3}\\
 & & & & & &\\
& & & & & & \\\cline{5-5}
& & & & \multirow{3}{*}{\makecell{$a_\tau = 0$ for \\ $\tau \in \Gal(K/{\Q(\beta))}$; \\ $12$ otherwise}} & & \\
& & & & & &\\
& & & & & &\\
\hline
\end{tabular}
\vspace{0.3cm}
\caption{\label{tab:david_types}Summary of what happens if one of the integers $A$, $B$, $C_s$ or $C_o$ is zero. The entries in the `Everywhere unramified character' column are to be interpreted as follows: if the condition in the `Condition' column holds, then for every elliptic curve over $K$ admitting a $K$-rational $p$-isogeny of isogeny character $\lambda$ with signature $\eps$ with $p \neq q$, the character listed in the `Everywhere unramified character' column is everywhere unramified. The column `Signature Type' is defining types of signatures according to the $(a_\tau)_{\tau \in G}$-column.}
\end{center}
\end{table}
}

\begin{proof}
This result is essentially Proposition 2.15 in \cite{david2012caractere}, which itself was considered as an explicit version of Lemma 2 of \cite{momose1995isogenies}. It is also given as Proposition 2.4.2 in \cite{david2008caractere}. We provide some of the details here.

Since $q$ splits completely in the Galois extensions $K/\Q$, the ideals $\tau(\fq)$ for $\tau \in G$ give the distinct ideals of $K$ lying above $q$, and their product is the ideal $q\O_K$.

Consider first that $A(\eps, \fq) = 0$, i.e., that $\gamma_\fq^\eps = 1$. By considering the ideal of $\O_K$ generated by $\gamma_\fq^\eps$, we obtain
\[ \O_K = \left(\prod_{\tau \in G}\tau(\fq)^{a_\tau}\right)^{h_\fq} \]
which implies that $a_\tau = 0$ for all $i$. Since $\mu|_{I_\fp} = \chi_p^{a_\fp}$ and $\mu$ is unramified away from $p$, we obtain that $\mu$ is unramified everywhere.

In the second case that $\gamma_\fq^\eps = q^{12h_\fq}$, one similarly obtains that $a_\tau = 12$ for all $\tau$, whence $\mu/\chi_p^{12}$ is everywhere unramified.

We consider the remaining cases together; that is, that $\gamma_\fq^\eps = \beta^{12h_\fq}$, where $\beta$ is a root of the characterisitic polynomial of Frobenius of a (supersingular or ordinary) elliptic curve over $\F_\fq$. We write $L = \Q(\beta)$, which is either $\Q$ or an imaginary quadratic field. By assumption, the element $\gamma_\fq^\eps$ - \emph{a priori} in $K$ - is also in $L$; so either $\gamma_\fq^\eps$ is rational, or it generates $L$ and therefore $L$ is contained in $K$.
 
In the first of these two subcases, $\beta^{12h_\fq}$ is rational; therefore it is equal to its complex conjugate $\bar{\beta}^{12h_\fq}$; in particular there is a $12h_\fq$\textsuperscript{th} root of unity $\zeta \in L$ such that $\beta = \zeta \bar{\beta}$. However, since $L$ is imaginary quadratic, it only admits $n$\textsuperscript{th} roots of unity for $n = 2, 4$ or $6$; thus $\beta^{12}$ is rational. Moreover, since $\beta$ is an algebraic integer, $\beta^{12}$ is an integer. Since the absolute value of $\beta$ is $\sqrt{\Nm({\fq})} = \sqrt{q}$, we get that $\beta^{12} = \pm q^6$, and therefore
\[ \left(\prod_{\tau \in G}\tau(\fq)^{a_\tau}\right)^{h_\fq}= \gamma_\fq^\eps\O_K = \beta^{12h_\fq}\O_K = q^{6h_\fq}\O_K = \left(\prod_{\tau \in G}\tau(\fq)\right)^{6h_\fq}.\]
This implies that $a_\tau = 6$ for all $\tau \in G$, which as before implies that $\mu/\chi_p^{6}$ is everywhere unramified.

Staying with this subcase, we show next that $\Q(\beta) = \Q(\sqrt{-q})$, and $\beta^{12h_\fq} = q^{6h_\fq}$.

The relation
\[ q = \Nm(\fq) = \bar{\beta}\beta = \zeta\beta^2 \]
implies that
\[ q\O_L = (\beta^2)\O_L = (\beta\O_L)^2; \]
i.e. that $q$ ramifies in $L$ (which in particular forces $L$ to be different to $\Q$.)

Write the characteristic polynomial of Frobenius of which $\beta$ is a root as $X^2 - T_\fq X + \Nm(\fq)$, and we consider here the case that $q \neq 2$. Since $q$ ramifies in $L$, $q$ divides the discriminant $T_\fq^2 - 4\Nm(\fq)$ of the polynomial. Since $q = \Nm(\fq)$, we obtain that $q$ divides $T_\fq$. Thus, $T_\fq^2$ is a non-negative integer which is simultaneously a square, a multiple of $q$, and (by the Hasse bound) is at most $4q$. This can only happen if $T_\fq = 0$, or $q = 3$, in which case $T_\fq^2 = 9$ is also a possibility.

In a similar way one may treat the case of $q = 2$, and reach the conclusion that $T_\fq$ must be zero. This is spelled out in David's proof, to which we refer the interested reader.

In the cases that $T_\fq = 0$, we obtain that $\beta$ is a root of the polynomial $X^2 + q$, i.e., that $\beta^2 = -q$, whence $\Q(\beta) = \Q(\sqrt{-q})$, and $\beta^{12h_\fq} = q^{6h\fq}$.

In the case that $q = 3$ and $T_\fq^2 = 9$, one obtains
\[ \beta = \frac{\pm3 \pm \sqrt{-3}}{2} \]
whence $\Q(\beta) = \Q(\sqrt{-q})$, and one may directly verify that $\beta^{12} = 3^6$.

In the second subcase, $L$ is an imaginary quadratic field contained in $K$. We here only sketch the approach, again leaving the details and work-through of the ideal equations to David's paper \cite{david2012caractere}. Let $H := \Gal(K/L)$. Since $\gamma_\fq \in L$, it is invariant under the action of all $\rho \in H$; this implies that for all $\tau \in G$, the integer $a_\tau$ is constant on coset (left or right) classes in $G/H$. Since there are only two classes - the identity $H$, and $H\gamma$, for $\gamma$ an element of $G$ which induces complex conjugation on $L$ - one obtains that half of the $a_\tau$ are one value $a_{id}$, while the other half are another value $a_\gamma$. Coupling this with the equation of $L$-ideals $(\beta^{12h_\fq})\O_L = \gamma_\fq\O_L$, one deduces that $\Nm_{K/L}(\fq)$ is principal, generated by either $\beta$ or $\bar{\beta}$. The consideration of each of these cases yields that either $a_{id} = 0$ and $a_\gamma = 12$, or vice-versa. This finishes the proof.
\end{proof}

The `Signature Type' column refers to the ``Types'' which Momose identified in Lemma 2 of \cite{momose1995isogenies} for the signature $\eps$ of an isogeny character $\lambda$, and are defined as follows.

\begin{definition}\label{def:}
We say that a signature $\eps$ of an isogeny character $\lambda$ is of \textbf{Type $1$} if the integers in $\eps$ are either all $0$ or all $12$.

We say it is of \textbf{Type $2$} if the integers in $\eps$ are all $6$.

We say it is \textbf{Type $3$ with field $L$} if $K$ contains an imaginary quadratic field $L$, and either $a_\tau = 12$ for $\tau \in \Gal(K/L)$ and $0$ otherwise, or	$a_\tau = 0$ for $\tau \in \Gal(K/L)$ and $12$ otherwise.
\end{definition}

We wish to emphasise that this definition \textbf{applies only to the signature $\eps$ of an isogeny character, and not \emph{a priori} to the isogeny character $\lambda$ itself}. While Momose did use the terms ``Type 1'', ``Type 2'' and ``Type 3'' to refer to the isogeny character $\lambda$ in the statement of his main theorem, these terms originally came from defining them for the signature, and as pointed out by the referee, it is a source of confusion to use the same terms for both the signature $\eps$ as well as the isogeny character $\lambda$. This is particularly so for Types 2 and 3, which are genuinely different in both cases (more on this later). For this reason we will use the terms \textbf{Momose type $N$} (for $N = 1,2,3$) to refer specifically to the three Types of $\lambda$ which Momose identified in the statement of his main theorem, which we recall as follows.

\begin{definition}
We say that an isogeny character $\lambda$ is of \textbf{Momose Type $1$} if either $\lambda^{12}$ or $(\lambda/\chi_p)^{12}$ is everywhere unramified.

We say it is of \textbf{Momose Type $2$} if $\lambda^{12} = \chi_p^6$.

We say it is of \textbf{Momose Type $3$} if $K$ contains an imaginary quadratic field $L$ as well as its Hilbert class field, $p$ splits in $L$ as $\fp_L\cdot\overline{\fp_L}$, and for any prime $\fq$ of $K$ coprime to $\fp_L$,
\[ \lambda^{12}(\sigma_\fq) = \alpha^{12} \Mod{\fp_L} \]
for any $\alpha \in L^\times$ a generator of $\Nm_{K/L}(\fq)$\footnote{Note that this is necessarily a principal ideal which follows from an application of the Principal ideal theorem.}.
\end{definition}

Note that it is clear from \Cref{tab:david_types} that $\eps$ being of signature Type 1 is equivalent to $\lambda$ being of Momose Type 1; however, as will be discussed in the sequel, Momose Types 2 and 3 are respectively stronger than signature types $2$ and $3$. Note also that David entirely avoids the use of these `Type' designations. In the sequel we will use the term `Type' only to refer to the signature, and where necessary, will use the term `Momose Type' for the isogeny character. Observe also that the (signature!) Type is independent of the choice of prime ideal $\fp_0$ above $p$, since this merely permutes the integers in the signature.

We make one final definition regarding these `Types'.

\begin{definition}
Let $K$ be a number field Galois over $\Q$. For $N = 1,2,3$, we say that a prime $p$ is a \textbf{Type $N$ prime for $K$} if there exists an elliptic curve over $K$ admitting a $K$-rational $p$-isogeny with isogeny signature of Type $N$.
\end{definition}

With this nomenclature, we may frame a direct corollary of the above \Cref{prop:david_prop_215}, which deals with isogeny signatures not of one of these three identified Types.

\begin{corollary}\label{cor:main_algorithmic_result}
Let $K/\Q$ be a Galois extension of group $G$, and $\fq | q$ a completely split prime ideal of $K$. Let $\eps$ be an element in $\left\{0,4,6,8,12\right\}^G$ which is not of Type $1$, $2$ or $3$. Then the integers $A(\eps, \fq)$, $B(\eps, \fq)$ and $C(\eps, \fq)$ are all non-zero. In particular, if there exists an elliptic curve over $K$ admitting a $K$-rational $p$-isogeny with isogeny character of signature $\eps$, then $p$ divides the non-zero integer $ABC(\eps, \fq)$ defined in \Cref{def:ABC}.
\end{corollary}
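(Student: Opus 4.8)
The plan is to prove the non-vanishing claim as the contrapositive of \Cref{prop:david_prop_215}, and then to assemble the divisibility statement directly from \Cref{prop:ABC_div}.

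First I would establish that $A(\eps,\fq)$, $B(\eps,\fq)$, $C_s(\eps,\fq)$ and $C_o(\eps,\fq)$ are each non-zero. \Cref{prop:david_prop_215}, read through \Cref{tab:david_types}, asserts precisely that the vanishing of any one of these four integers forces $\eps$ into a prescribed signature type: $A(\eps,\fq)=0$ or $B(\eps,\fq)=0$ puts $\eps$ into Type $1$ (all $a_\tau$ equal to $0$, respectively all equal to $12$), $C_s(\eps,\fq)=0$ puts $\eps$ into Type $2$ (all $a_\tau$ equal to $6$), and $C_o(\eps,\fq)=0$ puts $\eps$ into Type $3$. Since by hypothesis $\eps$ is of none of these three types, the contrapositive gives that all four integers are non-zero. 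Because $C(\eps,\fq)=\lcm(C_o(\eps,\fq),C_s(\eps,\fq))$ and the $\lcm$ of non-zero integers is non-zero, we conclude $C(\eps,\fq)\neq 0$, and likewise $A(\eps,\fq),B(\eps,\fq)\neq 0$, which is the first assertion.

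For the divisibility statement, I would first record that since $\fq$ lies above a completely split prime $q$, its residue field is $\F_q$ and hence $\Nm(\fq)=q$; in particular $\Nm(\fq)$ is a non-zero factor of $ABC(\eps,\fq)=\lcm(A(\eps,\fq),B(\eps,\fq),C(\eps,\fq),\Nm(\fq))$, so this last integer is itself non-zero by the previous paragraph. Now suppose $E/K$ admits a $K$-rational $p$-isogeny of signature $\eps$. If $p=q$ then $p\mid\Nm(\fq)\mid ABC(\eps,\fq)$ and we are done, so we may assume $p\neq q$, in which case $\fq$ is coprime to $p$ and \Cref{prop:ABC_div} applies. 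Splitting into cases on the reduction type of $E$ at $\fq$: potentially multiplicative reduction gives $p\mid A(\eps,\fq)$ or $p\mid B(\eps,\fq)$; potentially good ordinary reduction gives $p\mid C_o(\eps,\fq)$; and potentially good supersingular reduction gives $p\mid C_s(\eps,\fq)$. In every case the relevant integer divides $ABC(\eps,\fq)$ (using $C_o(\eps,\fq),C_s(\eps,\fq)\mid C(\eps,\fq)\mid ABC(\eps,\fq)$), so $p\mid ABC(\eps,\fq)$, completing the argument.

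Since both constituent propositions have already been established, I expect no genuine obstacle here; the statement is essentially a bookkeeping corollary. The only points requiring a little care are the inclusion of $\Nm(\fq)$ in the definition of $ABC(\eps,\fq)$, which is exactly what lets the degenerate case $p=q$ be absorbed without invoking \Cref{prop:ABC_div} (whose hypothesis requires $\fq$ coprime to $p$), and the observation that $\lcm$ preserves non-vanishing, so that the aggregated nets $C(\eps,\fq)$ and then $ABC(\eps,\fq)$ inherit non-vanishing from their components.
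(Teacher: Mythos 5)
Your proposal is correct and follows essentially the same route as the paper: the non-vanishing of $A$, $B$, $C_s$, $C_o$ (hence of $C$ and $ABC$) is the contrapositive of \Cref{prop:david_prop_215} as summarised in \Cref{tab:david_types}, and the divisibility comes from \Cref{prop:ABC_div} in the case $p \neq q$, with the factor $\Nm(\fq)$ in the $\lcm$ absorbing the case $p = q$ exactly as you observe. The paper's own proof is just a terser version of the same bookkeeping.
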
 

\begin{proof}
The first assertion is a direct corollary of \Cref{prop:david_prop_215}. That the isogeny prime $p$ from the statement divides one of $A(\eps, \fq)$, $B(\eps, \fq)$ or $C(\eps, \fq)$ was seen previously under the assumption that $q \neq p$, and it for this reason that $\Nm(\fq)$ is also included in the $\lcm$.
\end{proof}

We are therefore reduced to studying isogenies with a signature of Type $1$, $2$ or $3$. Since one of the goals of this section is to explain Momose's Isogeny Theorem, which is stated in terms Momose Types, we now - to the extent required for the sequel - consider the Type $2$ and $3$ cases further.

Let therefore $\eps = 6\Nm(K/\Q)$. The following key result relates the notions of signature Type 2 and Momose Type 2.

\begin{proposition}\label{prop:type_2_to_momose_type_2}
Let $E/K$ be an elliptic curve admitting a $K$-rational $p$-isogeny of isogeny character $\lambda$ of signature $\eps$ of Type $2$. If there exists a set $\Gen$ of primes generating $\Cl_K$, such that for all $\fq \in \Gen$:
\begin{enumerate}
        \item $\fq$ is coprime to $p$;
        \item $\fq$ is a completely split prime ideal;
        \item $E$ has potentially good supersingular reduction at $\fq$,
\end{enumerate}
then $\lambda$ is of Momose Type 2.
\end{proposition}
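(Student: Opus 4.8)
The plan is to prove the equivalent assertion $\mu = \chi_p^6$, since $\mu = \lambda^{12}$ and Momose Type $2$ is by definition the condition $\lambda^{12} = \chi_p^6$. To this end I would introduce the auxiliary character $\psi := \mu\cdot\chi_p^{-6}\colon G_K \to \F_p^\times$ and show it is trivial. The first step is to observe that $\psi$ is \emph{everywhere unramified}. Indeed, since $\eps$ is of Type $2$, and $G$ acts transitively on the primes above $p$, every integer $a_\fp$ attached to a prime $\fp \mid p$ equals $6$; hence \Cref{prop:david_isog_char}(2) gives $\mu|_{I_\fp} = \chi_p^6|_{I_\fp}$, i.e.\ $\psi|_{I_\fp}$ is trivial for each $\fp \mid p$. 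Away from $p$ both $\mu$ (by \Cref{prop:david_isog_char}(1)) and $\chi_p$ are unramified, so $\psi$ is unramified at every prime of $K$.

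The second step is class field theory: an everywhere-unramified character $G_K \to \F_p^\times$ factors through $\Gal(H/K)$, where $H$ is the Hilbert class field, and the Artin map identifies this group with $\Cl_K$. In particular $\psi(\sigma_\fq)$ depends only on the ideal class $[\fq]$, so $\psi$ descends to a homomorphism $\Cl_K \to \F_p^\times$. Since the classes $[\fq]$ for $\fq \in \Gen$ generate $\Cl_K$ by hypothesis, it suffices to prove $\psi(\sigma_\fq) = 1$ for each $\fq \in \Gen$.

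For the third step, fix $\fq \in \Gen$. As $\fq$ is completely split we have $\Nm(\fq) = q$ for a rational prime $q$ with residue field $\F_q$, and $\fq$ is coprime to $p$. Because $E$ has potentially good supersingular reduction at $\fq$, the argument proving \Cref{prop:ABC_div}(2) produces a supersingular Frobenius root $\beta_\fq$ with $\mu(\sigma_\fq) \equiv \beta_\fq^{12} \pmod{p}$. Supersingularity forces the trace of Frobenius $T_\fq$ to be divisible by the residue characteristic $q$, whence by the Hasse bound $T_\fq = 0$ when $q \geq 5$, giving $\beta_\fq^2 = -q$ and $\beta_\fq^{12} = q^6$; the small residue characteristics are handled by the same direct computation as in the supersingular subcase of the proof of \Cref{prop:david_prop_215}. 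On the other hand $\chi_p(\sigma_\fq) \equiv \Nm(\fq) = q \pmod p$, so $\chi_p^6(\sigma_\fq) \equiv q^6$. Therefore $\psi(\sigma_\fq) = \mu(\sigma_\fq)\,\chi_p^{-6}(\sigma_\fq) = 1$ in $\F_p^\times$, as required. Combined with the previous step, $\psi$ is trivial, so $\mu = \chi_p^6$ and $\lambda$ is of Momose Type $2$.

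I expect the main obstacle to be the third step, namely the clean identity $\beta_\fq^{12} = \Nm(\fq)^6$, since this is exactly where all three hypotheses on $\Gen$ are consumed: complete splitness pins down $\Nm(\fq)$ to a rational prime and identifies the residue field, coprimality to $p$ makes $\sigma_\fq$ and the reduction congruence available, and supersingularity forces $q \mid T_\fq$. The delicate point is the sign: for $q \geq 5$ the Hasse bound kills the trace and the sign is automatically $+$, but for the residue characteristic $q = 2$ the trace need not vanish and a supersingular root can give $\beta_\fq^{12} = -q^6$, so one must rule this out (for instance by combining the congruence $\mu(\sigma_\fq)^{h_\fq} \equiv (\gamma_\fq^\eps) = q^{6h_\fq}$ coming from \Cref{lem:momose_lemma_1} for the Type $2$ signature $\eps = 6\Nm(K/\Q)$ with the parity of $h_\fq$) exactly as in David's case analysis. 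The class-field-theoretic descent of $\psi$ to $\Cl_K$ is standard, but should be stated carefully, as it is precisely what converts the local supersingular computations at the generating primes into the global identity $\mu = \chi_p^6$.
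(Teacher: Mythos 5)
Your proof is correct and follows essentially the same route as the paper's: both observe that the Type 2 signature makes $\mu\chi_p^{-6}$ everywhere unramified, hence a character of $\Cl_K$ determined by its values on the generating set $\Gen$, and then use the supersingular hypothesis at each $\fq \in \Gen$ to compute $\mu(\sigma_\fq) \equiv \beta_\fq^{12} = \Nm(\fq)^6 = \chi_p^6(\sigma_\fq)$. The sign subtlety you flag at residue characteristic $2$ (a supersingular Frobenius root with nonzero trace giving $\beta_\fq^{12} = -q^6$) is genuine; the paper defers it to ``the same reasoning as in the proof of \Cref{prop:david_prop_215}'', so your explicit attention to it is, if anything, an improvement.
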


\begin{proof}
The proof is modelled on the proof of Proposition 4.5 of \cite{david2012caractere}. Since $\eps$ is of Type 2 we have that $\mu\chi_p^{-6}$ is an everywhere unramified character, and hence defines an abelian extension of $K$ contained in the Hilbert class field of $K$, and thus is determined by its values at Frobenius automorphisms $\sigma_\fq$ for $\fq$ running through a set of generators of $\Cl_K$. Let $\fq \in \Gen$. From condition (2) we have that $\Nm(\fq)$ is a rational prime, call it $q$. From condition (3) we obtain $\lambda(\sigma_\fq) \equiv \beta_\fq \Mod{\mathcal{P}^\fq}$ for $\beta_\fq$ a supersingular Frobenius root over $\fq$. By the same reasoning as in the proof of \Cref{prop:david_prop_215} we have that $\beta_\fq^{12} = q^6 = \Nm(\fq)^6$, and hence
\begin{align*}
	\mu(\sigma_\fq) &= \Nm(\fq)^6 \Mod{p}\\
	 &= \chi_p(\sigma_\fq)^6.
	\end{align*}
Since this is true for all $\fq \in \Gen$, we obtain $\mu = \chi_p^6$ (i.e., not just equal up to an everywhere unramified character).
\end{proof}

\begin{corollary}\label{cor:type_2_not_momose}
Let $K$ be a number field Galois over $\Q$ with $G = \Gal(K/\Q)$, and let $\eps_6 := 6\sum_{\tau \in G}\tau$ be the Type 2 signature. Let $\Gen$ be a set of completely split prime ideal generators of $\Cl_K$. Then the integer
\[ ABC_o(\Gen) := \underset{\fq \in \Gen}\lcm\left(A(\eps_6, \fq), B(\eps_6, \fq), C_o(\eps_6, \fq), \Nm(\fq)\right)\]
is non-zero. Furthermore, if $E/K$ is an elliptic curve admitting a $K$-rational $p$-isogeny of isogeny character $\lambda$ of signature $\eps_6$, but $\lambda$ is not of Momose Type 2, then $p$ divides $ABC_o(\Gen)$.
\end{corollary}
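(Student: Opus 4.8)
The plan is to prove the two assertions separately, both as fairly direct consequences of the machinery already assembled. For the non-vanishing of $ABC_o(\Gen)$, I would fix an arbitrary $\fq \in \Gen$ and argue that none of $A(\eps_6, \fq)$, $B(\eps_6, \fq)$, $C_o(\eps_6, \fq)$ can vanish, by reading \Cref{tab:david_types} ``backwards''. Concretely: were $A(\eps_6, \fq) = 0$ or $B(\eps_6, \fq) = 0$, then \Cref{prop:david_prop_215} would force the coefficients $(a_\tau)_{\tau \in G}$ of $\eps_6$ to be all $0$ or all $12$ respectively; and were $C_o(\eps_6, \fq) = 0$, they would be forced into the Type~3 pattern, whose entries take only the values $0$ and $12$. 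Each of these conclusions contradicts the fact that every coefficient of $\eps_6 = 6\sum_{\tau\in G}\tau$ equals $6$. Since $\Nm(\fq) \neq 0$ trivially, the $\lcm$ over $\fq \in \Gen$ is non-zero.

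For the divisibility, I would argue by the contrapositive of \Cref{prop:type_2_to_momose_type_2}, applied to the given generating set $\Gen$. Every $\fq \in \Gen$ already satisfies condition~(2) of that proposition, since $\Gen$ consists of completely split primes. Hence, were conditions~(1) and~(3) also to hold at every $\fq \in \Gen$, the proposition would conclude that $\lambda$ is of Momose Type~2, contrary to hypothesis. Therefore there must exist some $\fq_0 \in \Gen$ at which either~(1) or~(3) fails.

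I would then split into cases according to which condition fails at $\fq_0$. If~(1) fails, then $\fq_0 \mid p$; as $\fq_0$ is completely split over the rational prime beneath it, $\Nm(\fq_0) = p$, so $p = \Nm(\fq_0)$ divides $ABC_o(\Gen)$ because $\Nm(\fq_0)$ is itself one of the terms in the defining $\lcm$. If instead~(1) holds but~(3) fails, then $\fq_0$ is coprime to $p$ and $E$ has reduction at $\fq_0$ that is either potentially multiplicative or potentially good ordinary. Applying \Cref{prop:ABC_div}: in the potentially multiplicative case $p$ divides $A(\eps_6, \fq_0)$ or $B(\eps_6, \fq_0)$, and in the potentially good ordinary case $p$ divides $C_o(\eps_6, \fq_0)$. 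In every sub-case $p$ divides one of the terms appearing in the $\lcm$ defining $ABC_o(\Gen)$, which completes the argument.

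The only genuinely delicate point is the bookkeeping over which condition of \Cref{prop:type_2_to_momose_type_2} fails: in particular, the possibility that a generator $\fq_0$ divides $p$ (so that \Cref{prop:ABC_div}, which requires coprimality to $p$, is unavailable) must be handled separately, and it is precisely to capture this case that $\Nm(\fq)$ is thrown into the $\lcm$. Everything else reduces to the reduction-type trichotomy at $\fq_0$ and the corresponding divisibilities already recorded in \Cref{prop:ABC_div}.
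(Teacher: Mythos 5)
Your proof is correct and follows essentially the same route as the paper's: non-vanishing of $ABC_o(\Gen)$ by reading \Cref{prop:david_prop_215} backwards (a vanishing $A$, $B$ or $C_o$ would force $\eps_6$ to be of Type 1 or 3, contradicting all coefficients being $6$), and the divisibility by taking the contrapositive of \Cref{prop:type_2_to_momose_type_2} and then invoking \Cref{prop:ABC_div} at the offending generator. If anything, your explicit treatment of the sub-case where a generator $\fq_0$ divides $p$ (so that $p = \Nm(\fq_0)$ enters the $\lcm$ through the $\Nm(\fq)$ term, and \Cref{prop:ABC_div} is unavailable) is more careful than the paper's one-line argument, which only mentions the failure of the supersingular-reduction condition.
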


\begin{proof}
That $ABC_o(\Gen)$ is non-zero follows from \Cref{prop:david_prop_215}; namely, if one of $A(\eps_6, \fq)$, $B(\eps_6, \fq)$ or $C_o(\eps_6, \fq)$ were zero, then one would conclude that $\eps_6$ were of Type 1 or 3 (which it is not). For the last claim, since $\lambda$ is assumed not of Momose Type 2, by \Cref{prop:type_2_to_momose_type_2}, there must exist a prime $\fq$ in $\Gen$ such that $E$ does \emph{not} have potentially good supersingular reduction at $\fq$; therefore $p$ must divide the integer $ABC_o(\Gen)$.
\end{proof}

\begin{remark}
Since every ideal class contains a completely split prime ideal if $K/\Q$ is Galois, the set $\Gen$ in the statement of the above corollary exists. (See e.g. Proposition 4.2 of \cite{david2012caractere}.)
\end{remark}

One then deals separately with isogenies of Momose Type 2; this will be done in \Cref{sec:type_2_primes}. Note that when $h_k = 1$ then we can take the empty set of generators in \Cref{cor:type_2_not_momose}, and hence we obtain that an isogeny of signature Type 2 is automatically of Momose Type 2.

\begin{remark}
David takes (D\'{e}finition 4.1 of \cite{david2012caractere}, Proposition 3.1.2 in \cite{david2008caractere}) as a generating set for $\Cl_K$ the set $\mathcal{J}_K$ of ideals of $K$ of norm at most $2(\Delta_K)^{Ah_K}$, for $A$ the absolute and effectively computable constant appearing in the (unconditional) Effective Chebotarev Density theorem (Theorem 1.1 in \cite{lagarias1979bound}). We now know that we can take $A = 12577$ by Theorem 1.1 of \cite{ahn2019explicit}, and we thank the referee for pointing this out to us. By Proposition 4.2 of \cite{david2012caractere}, the set $\mathcal{J}_K$ is indeed a generating set. In the proof of Momose's Theorem 1 he further assumes that the ideals in the set of generators for $\Cl_K$ are coprime to $6h_K$; this is so that he may conclude that $\beta_\fq = \pm \sqrt{-q}$; however as shown by David in the proof of Proposition 2.15 of \cite{david2012caractere} - specifically in the proof that $\mu(\sigma_\fq) = q^6 \Mod{p}$ - this is an unnecessary restriction.
\end{remark}

Suppose next that $\eps$ is of Type 3; in particular $K$ contains an imaginary quadratic field $L$. Both Momose and David show, for $p$ sufficiently large with respect to $K$, that an isogeny character of signature Type $3$ must be of Momose Type $3$. Analogously to the Type $2$ case, one may be more precise here and identify a non-zero integer which serves as a multiplicative bound on these ``signature Type 3 but not Momose Type 3'' isogeny primes. The details of this are rather slippery and will appear in forthcoming work with Derickx \cite{banwait2022explicit}. For now we observe from \Cref{tab:david_types} that the integers $A(\eps, \fq)$, $B(\eps, \fq)$ and $C_s(\eps, \fq)$ are non-zero for completely split prime ideals $\fq$, and we can ensure that $C_o(\eps, \fq)$ is non-zero provided we choose $\fq$ to furthermore be such that $\Nm_{K/L}(\fq)$ is \emph{not} a principal ideal of $L$. A positive density of such primes necessarily exist provided we assume that $K$ does not contain the Hilbert class field of $L$, which is sufficient for our purposes. (In effect, we are precluding the arising of Momose Type 3 isogenies.)

We may summarise the above discussion as the following sharpened version of Momose's Theorem A with the assumptions that suffice for our purposes in the sequel.

\begin{corollary}\label{cor:momose_weak_sharp}
Let $K/\Q$ be a Galois number field which does not contain the Hilbert class field of an imaginary quadratic field. Write $G = \Gal(K/\Q)$. Let $\fq$ be a completely split prime ideal of $K$ which furthermore satifies the property that, if $K$ contains an imaginary quadratic field $L$, then $\Nm_{K/L}(\fq)$ is not a principal ideal of $L$. Let $\Gen$ be a set of completely split prime ideal generators of $\Cl_K$. Let $\eps$ be an element of $\left\{0,4,6,8,12\right\}^G$. Then the integer $ABC_o(\Gen)$ is non-zero, and if $\eps$ is not of Type $1$ or Type $2$, then the integer $ABC(\eps, \fq)$ is non-zero.

Moreover, if $E/K$ is an elliptic curve admitting a $K$-rational $p$-isogeny with isogeny character $\lambda$ of signature $\eps$, with $\lambda$ not of Momose Type 1 or 2, then $p$ divides the non-zero integer $D$, where
\[
D = \begin{cases*}
ABC_o(\Gen) & \mbox{if $\eps$ is of Type 2}\\
ABC(\eps, \fq) & \mbox{otherwise.}
\end{cases*}
\]

\end{corollary}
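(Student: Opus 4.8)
The plan is to assemble the corollary from the three ingredients already in place: \Cref{prop:david_prop_215} together with \Cref{tab:david_types} for the non-vanishing statements, \Cref{cor:type_2_not_momose} for the Type $2$ case, and \Cref{prop:ABC_div} for the divisibility in the remaining cases. The $\fq$ and $\Gen$ in the hypotheses exist by the discussion preceding the statement (the non-containment of any Hilbert class field guarantees a positive density of suitable $\fq$), so I would take them as given.

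First I would dispose of the non-vanishing claims. That $ABC_o(\Gen) \neq 0$ is exactly the first assertion of \Cref{cor:type_2_not_momose}, so I would simply cite it. For $ABC(\eps, \fq)$ when $\eps$ is of neither Type $1$ nor Type $2$, I would argue by contraposition through \Cref{tab:david_types}. Since $\Nm(\fq) = q \neq 0$, vanishing of $ABC(\eps, \fq) = \lcm(A(\eps,\fq), B(\eps,\fq), C_o(\eps,\fq), C_s(\eps,\fq), \Nm(\fq))$ forces one of $A(\eps, \fq)$, $B(\eps, \fq)$, $C_o(\eps, \fq)$, $C_s(\eps, \fq)$ to vanish. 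By \Cref{prop:david_prop_215}, $A(\eps,\fq) = 0$ or $B(\eps,\fq) = 0$ would make $\eps$ of Type $1$, and $C_s(\eps,\fq) = 0$ would make it of Type $2$, both excluded. The only remaining possibility, $C_o(\eps, \fq) = 0$, forces (again by the table) an imaginary quadratic field $L = \Q(\beta) \subseteq K$ with $\Nm_{K/L}(\fq)$ principal, which is precisely what the hypothesis on $\fq$ rules out. Hence $ABC(\eps, \fq) \neq 0$.

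For the \emph{moreover}, I would first observe that $\eps$ cannot be of Type $1$: signature Type $1$ is equivalent to $\lambda$ being of Momose Type $1$ (as noted just after the definition of the Momose types), and $\lambda$ is assumed not of Momose Type $1$. I then split according to the definition of $D$. If $\eps$ is of Type $2$, then $\eps = \eps_6$, and since $\lambda$ is not of Momose Type $2$, \Cref{cor:type_2_not_momose} gives $p \mid ABC_o(\Gen)$, the nonzero integer $D$. Otherwise $\eps$ is of neither Type $1$ nor Type $2$, so $ABC(\eps, \fq) \neq 0$ by the first part; for the divisibility I would examine the reduction type of $E$ at $\fq$. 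If $q \neq p$ then $\fq$ is coprime to $p$ and \Cref{prop:ABC_div} shows that $p$ divides $A(\eps, \fq)$ or $B(\eps, \fq)$ (potentially multiplicative reduction), $C_o(\eps, \fq)$ (potentially good ordinary), or $C_s(\eps, \fq)$ (potentially good supersingular); if instead $q = p$ then $p = \Nm(\fq)$ since $\fq$ is completely split. In every case $p$ divides $ABC(\eps, \fq) = D$, completing the argument.

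The corollary is essentially bookkeeping, so I do not expect a genuine obstacle; the two points requiring care are the $C_o$ analysis, where one must recognise that the non-principality hypothesis on $\fq$ is exactly the condition excluding the Type $3$ vanishing of $C_o(\eps,\fq)$, and the edge case $q = p$, handled by the presence of $\Nm(\fq)$ in the $\lcm$ exactly as in the proof of \Cref{cor:main_algorithmic_result}.
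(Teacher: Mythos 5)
Your proof is correct and follows essentially the same route as the paper: the non-vanishing of $ABC_o(\Gen)$ is quoted from \Cref{cor:type_2_not_momose}, the non-vanishing of $ABC(\eps,\fq)$ is read off \Cref{tab:david_types} with the non-principality hypothesis killing the $C_o = 0$ (Type 3) case, and the divisibility is split between \Cref{cor:type_2_not_momose} and the $A/B/C_o/C_s$ analysis of \Cref{prop:ABC_div} with the $q = p$ edge case absorbed by the $\Nm(\fq)$ factor. If anything, your direct appeal to \Cref{prop:ABC_div} in the ``otherwise'' branch is slightly more careful than the paper's citation of \Cref{cor:main_algorithmic_result}, whose stated hypothesis excludes Type 3 signatures even though that branch can include them.
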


\begin{proof}
From \Cref{tab:david_types}, if $\eps$ is not of Type 1 or 2, the only way $ABC(\eps, \fq)$ can be zero is if $\eps$ is of Type 3, in which case $C_o(\eps, \fq)$ would be zero; but this would imply that $\Nm_{K/L}(\fq)$ is a principal ideal of $L$, contradicting our assumption. That $ABC_o(\Gen)$ is non-zero is \Cref{cor:type_2_not_momose}. The final statements about the elliptic curve $E$ follow from \Cref{cor:main_algorithmic_result} and \Cref{cor:type_2_not_momose} (recalling that $\eps$ being of signature Type 1 is equivalent to $\lambda$ being of Momose Type 1).
\end{proof}

Note that the above result includes the assumption that $K/\Q$ is Galois, since this is used throughout David's work, although this is not assumed in the statement of Momose's Theorem 1, which we may now state as follows.

\begin{theorem}[Momose's Isogeny Theorem]\label{thm:momose_isogeny}
Let $K$ be a number field. Then there exists a constant $C_K$ such that, if there exists an elliptic curve $E/K$ admitting a $K$-rational $p$-isogeny for $p > C_K$, then the associated isogeny character is of Momose Type $1$, $2$, or $3$.
\end{theorem}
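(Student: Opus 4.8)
The plan is to exploit the finiteness already present in the formalism: whatever $p$ is, the signature $\eps$ of the isogeny character $\lambda$ lies in the \emph{finite} set $\{0,4,6,8,12\}^G$ with $G=\Gal(K/\Q)$. So after fixing, once and for all, auxiliary data independent of $p$ (a completely split prime $\fq_\eps$ for each signature $\eps$, and a completely split generating set $\Gen$ of $\Cl_K$ as in \Cref{cor:momose_weak_sharp}), it suffices to show that for each of the finitely many signatures, all but finitely many $p$ force $\lambda$ into one of the three Momose types. The constant $C_K$ will then be the maximum of the finitely many explicit bounds produced, enlarged so as to exceed all the norms $\Nm(\fq_\eps)$ involved, which makes the coprimality-to-$p$ hypotheses automatic.

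First I would assume $K/\Q$ Galois, which is the setting of all the machinery above; the passage to general $K$ is the content of Momose's original Theorem 1 of \cite{momose1995isogenies} and would be cited, since naively base-changing to the Galois closure weakens the notion of ``everywhere unramified'' and so does not by itself descend the Momose type. Working over Galois $K$, I would partition the possible signatures according to \Cref{tab:david_types}. A signature of Type $1$ is, by the equivalence noted after the table, exactly a character of Momose Type $1$, so nothing need be bounded. A signature that is of none of Types $1$, $2$, $3$ is handled immediately by \Cref{cor:main_algorithmic_result}: then $p \mid ABC(\eps,\fq_\eps)$ with the right-hand side non-zero, bounding $p$. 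A signature of Type $2$ is handled by \Cref{cor:type_2_not_momose}: either $\lambda$ is of Momose Type $2$, or $p \mid ABC_o(\Gen)\neq 0$.

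This leaves the Type $3$ signatures, with their imaginary quadratic field $L\subseteq K$, which I expect to be the main obstacle. The governing dichotomy is whether or not $K$ contains the Hilbert class field $H_L$ of $L$. If $K\not\supseteq H_L$, then Momose Type $3$ is impossible, and --- exactly as sketched before \Cref{cor:momose_weak_sharp} --- a positive density of completely split $\fq$ have $\Nm_{K/L}(\fq)$ non-principal, so $C_o(\eps,\fq)\neq 0$ and $p\mid ABC(\eps,\fq)$ is again bounded. If instead $K\supseteq H_L$, one must prove that for $p$ large a signature-Type-$3$ character is genuinely of Momose Type $3$; this refinement is the delicate part, carried out by Momose and by David and, in effective form, deferred to \cite{banwait2022explicit}, and I would invoke it here. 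Taking $C_K$ to be the maximum of the finitely many bounds obtained across all signatures completes the argument; the crux is thus the ``signature Type $3 \Rightarrow$ Momose Type $3$'' implication together with the bookkeeping making the reduction to Galois $K$ legitimate, everything else being an assembly of the corollaries already established.
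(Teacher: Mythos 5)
Your proposal is correct and follows essentially the same route as the paper, which itself treats this statement as a recollection of Momose's Theorem 1: the surrounding discussion in \Cref{sec:prelims} performs exactly your case analysis over the finitely many signatures in $\left\{0,4,6,8,12\right\}^G$ (non-Type-1/2/3 via \Cref{cor:main_algorithmic_result}, Type 2 via \Cref{cor:type_2_not_momose}, Type 3 via the principality dichotomy on $\Nm_{K/L}(\fq)$), and defers the same two delicate points --- the ``signature Type 3 $\Rightarrow$ Momose Type 3'' implication when $K$ contains $H_L$, and the reduction from general $K$ to the Galois case --- to Momose's original argument and to \cite{banwait2022explicit}. Your remark that passing to the Galois closure does not trivially descend the Momose type matches the paper's own caveat that Momose's non-Galois argument is wanting in detail.
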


While Momose's Lemmas 1 and 2 do assume that $K$ is Galois over $\Q$, an argument is given in the proof of Theorem 1 to cover the non-Galois case, by passing to the Galois closure. As pointed out by the referee, the details which Momose gives to extend to the non-Galois case are wanting of the care and clarity that David brings in her treatment of Momose's work; such a treatment will appear in forthcoming work of the author with Derickx \cite{banwait2022explicit}.

As mentioned in the introduction, David made the constant $C_K$ explicit. For the details we refer the interested reader to Section 2.4 of \cite{david2012caractere} or Section 2.3 of \cite{david2008caractere}; here we content ourselves to give only the recipe for the upper bound, since we will use (a minor variant of) this for the implementation of the $\DLMV$ bound in \Cref{sec:parts_together}.

David makes the following notational definitions of entities associated to a number field $K$ Galois over $\Q$.
\begin{equation}\label{eqn:david_constant}
\begin{aligned}
d_K &= [K : \Q]\\
\Delta_K &= \mbox{discriminant of } K\\
h_K &= \mbox{class number of } K\\
R_K &= \mbox{regulator of } K\\
r_K &= \mbox{rank of unit group of }K\\
\delta_K &= \begin{cases*}
    \frac{\ln 2}{r_K + 1} & if $d_K = 1$ or $2$\\
    \frac{1}{1201}\left(\frac{\ln(\ln d_K)}{\ln d_K}\right)^3 & if $d_K \geq 3$
    \end{cases*}\\
C_1(K) &= \frac{r_K^{r_K+1}\delta_K^{-(r_K-1)}}{2}\\
C_2(K) &= \exp(24C_1(K)R_K)\\
C(K,n) &= (n^{12h_K}C_2(K) + n^{6h_K})^{4} \ \ \mbox{ for }n \in \ZZ.
\end{aligned}
\end{equation}
David then takes the upper bound to be $C(K, 2(\Delta_K)^{Ah_K})$, where $A$ is the absolute constant which appears in the Effective Chebotarev Density Theorem (Theorem 1.1 in \cite{lagarias1979bound}) (and as before, we now know that we may take $A = 12577$).

\section{Overview of Quadratic Isogeny Primes}\label{sec:overview}

In the rest of the paper $K$ will denote a quadratic field. We may denote an isogeny signature $\eps$ as a pair $(a,b)$ of integers, referring to $a\cdot\id + b\cdot\sigma$, with $\sigma$ being the non-trivial Galois automorphism. Since there are five choices for each of $a$ and $b$, one immediately obtains $25$ possible signatures, which we designate as follows.

\begin{definition}\label{lem:epsilon_determination}
We define the following four categories for the $25$ possible values of the group ring character $\eps$ for a quadratic field $K$ as follows:

\begin{enumerate}
	\item \textup{\textbf{Quadratic}} $\eps$: the $4$ pairs $(12a,12b)$ for $a,b \in \left\{0,1\right\}$;
	\item \textup{\textbf{Quartic}} $\eps$: 
	the $5$ pairs $(6a,6b)$ for $a,b \in \left\{0,1,2\right\}$, excluding the $4$ quadratic pairs;
	\item \textup{\textbf{Sextic}} $\eps$: the $12$ pairs $(4a,4b)$ for $a,b \in \left\{0,1,2,3\right\}$, excluding the $4$ quadratic pairs;
	\item \textup{\textbf{Mixed}} $\eps$: the $4$ pairs $(4,6)$, $(8,6)$, $(6,4)$, $(6,8)$.
\end{enumerate}
\end{definition}

From the discussion in \Cref{sec:prelims} the names here have been chosen to reflect the nature of the group of automorphisms $\Aut_{\overline{\F_p}}(\widetilde{E}_\fp)$, for $\fp$ a prime of $K$ above $p$ and $\widetilde{E}_\fp$ the reduction of $E$ at $\fp$ (which may be a singular curve in the bad reduction case).

We observe that we in fact do not need to consider all $25$ signatures. As explained in Remark 2 of \cite{momose1995isogenies}, if an elliptic curve $E/K$ has a $K$-rational $p$-isogeny with character of signature $\eps = (a,b)$, then the dual isogeny (which corresponds to the action of the Fricke involution $w_p$ on $X_0(p)$) will be a $K$-rational $p$-isogeny of signature $(12 - a, 12 - b)$. Since we are interested in bounding the $p$ that can possible arise, we need only consider signatures up to ``dualising'' under $\eps \mapsto 12 - \eps$. This brings the number of signatures to consider down to $13$.

Furthermore, since swapping the integers in the signature corresponds in the moduli interpretation to sending a $K$-point $x$ on $X_0(p)$ to its Galois conjugate, we need additionally only consider signatures up to this swapping. This further brings the number of signatures to consider down to a mere $10$. We are grateful to Michael Stoll for suggesting these improvements (to reduce the number of signatures) to us.

\begin{remark}\label{rem:mazur_papers}
A previous version of this paper wrongly assumed that there can be no elliptic curves with isogeny of mixed signature, however this is not the case. We are grateful to Maarten Derickx for pointing out this gap in our previous argument. Explicit examples of such isogenies will be appearing in forthcoming work of the author with Derickx \cite{banwait2022explicit}.
\end{remark}

We next restate \Cref{cor:momose_weak_sharp} in the quadratic case, which simplifies to the following.

\begin{corollary}\label{cor:quadratic_algo}
Let $K/\Q$ be a quadratic field which is not imaginary quadratic of class number one. Let $\eps$ be a possible isogeny signature which is not of Type $1$ or $2$. Let $\fq$ be a split prime ideal of $K$ which, if $K$ is imaginary quadratic, is furthermore taken to be not principal. Then the integer $ABC(\eps, \fq)$ is non-zero. Furthermore if $E/K$ is an elliptic curve admitting a $K$-rational $p$-isogeny with isogeny signature $\eps$, then $p$ divides $ABC(\eps, \fq)$.
\end{corollary}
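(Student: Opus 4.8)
The plan is to obtain this as a direct specialisation of \Cref{cor:momose_weak_sharp}, so the work consists in checking that each of its hypotheses transfers correctly to the quadratic setting. The Galois hypothesis is automatic, since every quadratic extension $K/\Q$ is Galois, with $G = \Gal(K/\Q) = \{\id, \sigma\}$; thus a possible isogeny signature $\eps = (a,b)$ is exactly an element of $\{0,4,6,8,12\}^G$ as required, and we never need the generating set $\Gen$ since the hypothesis will place us outside the Type 2 branch.

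Next I would verify that the stated hypothesis ``$K$ is not imaginary quadratic of class number one'' is precisely the hypothesis ``$K$ does not contain the Hilbert class field of an imaginary quadratic field'' appearing in \Cref{cor:momose_weak_sharp}. If $K$ is real quadratic it contains no imaginary quadratic subfield at all, so the condition holds vacuously. If $K$ is imaginary quadratic then, for degree reasons, the only imaginary quadratic field it can contain is $K$ itself; hence $K$ contains the Hilbert class field $H_L$ of some imaginary quadratic $L$ if and only if $L = K$ and $H_K = K$, i.e. if and only if $h_K = 1$. Excluding this single case is exactly our standing hypothesis. The same reduction $L = K$ also shows that the condition imposed on $\fq$ matches: ``completely split'' coincides with ``split'' in degree two, and when $K$ is imaginary quadratic one has $\Nm_{K/L}(\fq) = \Nm_{K/K}(\fq) = \fq$, so the requirement that $\Nm_{K/L}(\fq)$ be non-principal in $L$ becomes exactly the requirement that $\fq$ be non-principal in $K$; for real $K$ this requirement is again vacuous.

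Finally I would translate the signature-type hypothesis into the Momose-type language used in the conclusion of \Cref{cor:momose_weak_sharp}. By \Cref{tab:david_types}, $\eps$ is not of signature Type 1 if and only if $\lambda$ is not of Momose Type 1; and since Momose Type 2 is stronger than signature Type 2, $\eps$ not of Type 2 forces $\lambda$ not of Momose Type 2. Hence any $\lambda$ with signature $\eps$ is automatically neither of Momose Type 1 nor Type 2, so \Cref{cor:momose_weak_sharp} applies, and as $\eps$ is not of Type 2 we land in its ``otherwise'' branch, giving $D = ABC(\eps, \fq)$, which is therefore non-zero and divisible by $p$. The one point that genuinely needs the hypothesis on $\fq$ is that $\eps$ is permitted here to be of signature Type 3 --- in the quadratic imaginary case these are exactly the two signatures $(12,0)$ and $(0,12)$ arising from the trivial group $\Gal(K/K)$ --- and it is precisely the non-principality of $\fq$ (available because $h_K > 1$) that keeps $C_o(\eps,\fq)$, and hence $ABC(\eps,\fq)$, non-zero in that case, exactly as in the proof of \Cref{cor:momose_weak_sharp}. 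This is the only step where any real care is required; the remainder is the bookkeeping of matching hypotheses described above.
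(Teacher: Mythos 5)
Your proposal is correct and follows the same route as the paper, which presents this corollary precisely as the specialisation of \Cref{cor:momose_weak_sharp} to quadratic fields; your hypothesis-matching (Galois automatic, "not imaginary quadratic of class number one" equivalent to "does not contain the Hilbert class field of an imaginary quadratic field", $\Nm_{K/K}(\fq)=\fq$ turning the norm condition into non-principality of $\fq$, and "$\eps$ not of signature Type 1 or 2" implying "$\lambda$ not of Momose Type 1 or 2") is exactly the bookkeeping the paper leaves implicit. The one step requiring genuine care --- that the Type 3 signatures $(12,0)$ and $(0,12)$ in the imaginary quadratic case are handled by the non-principality of $\fq$ keeping $C_o(\eps,\fq)$ non-zero --- is identified and resolved exactly as in the proof of \Cref{cor:momose_weak_sharp}.
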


This result suggests that one deal separately with signature Types 1 and 2. These will be dealt with in \Cref{sec:type_1_primes} and \Cref{sec:type_2_primes} respectively. In this remainder of this section we consider signatures $\eps$ not of Type 1 or 2. This will yield a finite and explicitly computable superset of the isogeny primes whose associated isogeny character has a signature which is not of Type 1 or 2, and we therefore denote this superset by $\NotTypeOneTwoPrimes(K)$.

Let therefore $\eps$ be a signature not equal to $(0,0)$ (the only Type 1 case remaining after identifying signatures) or $(6,6)$ (the Type 2 case). We refer to prime ideals $\fq$ as in \Cref{cor:quadratic_algo} as \textbf{auxiliary primes}, viz. split prime ideals in $K$ lying above rational primes $q$, which in the imaginary quadratic case are assumed to be \emph{not principal}. We take a finite set $\Aux$ of such auxiliary prime ideals, and we compute a superset of the non-Type 1 or 2 primes for $K$ as 
\begin{equation}\label{eqn:pre_type_one_two_primes}
\NotTypeOneTwoPrimes(K) := \bigcup_{\eps} \ \Supp \left( \gcd_{\fq \in \Aux}(ABC(\eps, \fq))\right),
\end{equation}
where the union is taken over the $8$ pairs
\[ (0,12), (0,4), (0,8), (4,4), (4,8), (4,12), (4,6), (0,6),\]
and $\Supp(\cdot)$ refers to taking the set of primes divisors of the argument.

We summarise this re-interpretation of Momose's Theorem 1 into the following.

\begin{proposition}
Let $K$ be a quadratic field which is not imaginary quadratic of class number one, and $p$ a prime not in $\NotTypeOneTwoPrimes(K)$. Then, if there exists an elliptic curve $E/K$ admitting a $K$-rational $p$-isogeny, the associated isogeny character has a signature of Type 1 or Type 2.
\end{proposition}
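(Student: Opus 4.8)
The plan is to prove the contrapositive, which is precisely the content assembled in the preceding results. Suppose $E/K$ admits a $K$-rational $p$-isogeny with associated isogeny character $\lambda$ of signature $\eps$, and suppose that $\eps$ is \emph{not} of Type 1 or Type 2; I must then show that $p \in \NotTypeOneTwoPrimes(K)$. Since we have already reduced (via dualising $\eps \mapsto 12-\eps$ and swapping the two integers) to considering signatures up to these identifications, and since the only Type 1 signature surviving these identifications is $(0,0)$ and the only Type 2 signature is $(6,6)$, the hypothesis that $\eps$ is neither Type 1 nor Type 2 means $\eps$ lies among the eight remaining pairs
\[ (0,12),\ (0,4),\ (0,8),\ (4,4),\ (4,8),\ (4,12),\ (4,6),\ (0,6) \]
listed in the definition \eqref{eqn:pre_type_one_two_primes} of $\NotTypeOneTwoPrimes(K)$.

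First I would invoke \Cref{cor:quadratic_algo}. For each auxiliary prime $\fq \in \Aux$ (a split, and in the imaginary quadratic case non-principal, prime ideal), that corollary guarantees two things: that the integer $ABC(\eps, \fq)$ is non-zero, and that $p$ divides $ABC(\eps, \fq)$. The hypotheses of \Cref{cor:quadratic_algo} are exactly met here, because $\eps$ is not of Type 1 or 2 and the auxiliary primes were chosen precisely to satisfy its split/non-principal conditions. Thus $p$ is a common divisor of the family $\{ABC(\eps,\fq)\}_{\fq \in \Aux}$, and since each member is non-zero their greatest common divisor is a well-defined non-zero integer; hence $p$ divides $\gcd_{\fq \in \Aux}(ABC(\eps,\fq))$.

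Consequently $p$ lies in the support of this gcd, i.e.\ $p \in \Supp\!\left(\gcd_{\fq \in \Aux}(ABC(\eps, \fq))\right)$. Taking the union over the eight admissible signatures $\eps$ gives exactly the set $\NotTypeOneTwoPrimes(K)$ of \eqref{eqn:pre_type_one_two_primes}, so $p \in \NotTypeOneTwoPrimes(K)$, completing the contrapositive. I do not expect any genuine obstacle here, since the statement is essentially a repackaging of \Cref{cor:quadratic_algo} together with the reduction to ten (and thence eight non-Type-1-or-2) signatures; the only point requiring a line of care is the justification that the gcd is non-zero, which follows immediately from the non-vanishing of each $ABC(\eps,\fq)$ asserted in \Cref{cor:quadratic_algo}, and the observation that the finite set $\Aux$ is non-empty so that the gcd and union are taken over non-trivial index sets.
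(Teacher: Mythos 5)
Your proof is correct and matches the paper's (implicit) argument: the paper states this proposition as a direct summary of \Cref{cor:quadratic_algo} together with the definition \eqref{eqn:pre_type_one_two_primes}, and your contrapositive unpacking — non-Type-1-or-2 signature implies $p \mid ABC(\eps,\fq)$ for every $\fq \in \Aux$, hence $p$ divides the (non-zero) gcd and lies in its support — is exactly the intended reasoning. The only point worth making fully explicit is the mechanism behind the reduction to the eight representative signatures: if $E$ itself has a signature outside that list, one applies \Cref{cor:quadratic_algo} to the dual isogeny or to the Galois-conjugate curve, whose signature is the listed representative; you gesture at this correctly, and it suffices.
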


The implementation of $\NotTypeOneTwoPrimes(K)$ in the function \path{get_not_type_one_two_primes} includes the necessary conditions discussed after the statement of \Cref{lem:momose_lemma_1}. Namely, if a prime $p$ appears in $\NotTypeOneTwoPrimes(K)$ from a signature $\eps$, then the following conditions must be satisfied in order for $p$ to appear in the output of $\NotTypeOneTwoPrimes(K)$:

\begin{itemize}
	\item
	if $\eps$ is not constant, then $p$ is required to split in $K$;
	\item
	if $\eps$ is sextic, then $p$ is required to be $2 \Mod{3}$;
	\item
	if $\eps$ is quartic, then $p$ is required to be $3 \Mod{4}$;
	\item
	if $\eps$ is mixed, then $p$ is required to be $11 \Mod{12}$.
\end{itemize}

This takes place in the function \path{filter_ABC_primes}.

\section{Computing Type 1 primes}\label{sec:type_1_primes}
Momose proved (Theorem $3$ in \cite{momose1995isogenies}) that a number field of degree at most $12$ admits only finitely many Type 1 primes. At that time, the restriction on degree arose from the $n$ for which the natural\footnote{in the context of Kamienny's extension of Mazur's formal immersion method} map from the smooth locus of the $n$\textsuperscript{th} symmetric power of $X_0(p)$ into the N\'{e}ron model of the Eisenstein quotient of $J_0(p)$ was known to be a formal immersion. This restriction on degree was removed very soon after Momose's paper appeared, as a result of Merel's resolution of the uniform boundedness conjecture for torsion on elliptic curves \cite{merel1996bornes}. Furthermore, not long after Merel's work appeared, Oesterl\'{e} provided a very explicit bound on the torsion primes that arise over a number field of given degree. Oesterl\'{e} never published his result; this gap in the literature was filled by Derickx, Kamienny, Stein, and Stoll as Appendix~A in \cite{derickx2019torsion}. 

Both David as well as Larson and Vaintrob use this bound of Oesterl\'{e} to give an explicit bound on the Type 1 primes as follows. If an elliptic curve $E/K$ over a number field of degree $d_K$ admits a $K$-rational $p$-isogeny with isogeny character $\lambda$ of Momose Type 1, then, by possibly replacing $E$ with its isogenous curve, we may suppose that $\mu := \lambda^{12}$ is unramified everywhere. The kernel field $K^\mu$ of $\mu$ is thus contained inside the Hilbert class field of $K$ and hence has degree at most $d_Kh_K$. Therefore, the kernel field $K^\lambda$ of $\lambda$ is a number field of degree at most $12d_Kh_K$. By construction, the $\Gal(\overline{K}/K^\lambda)$-action on the kernel of the isogeny is trivial, meaning that $E$ admits a $K^\lambda$-rational $p$-torsion point, and hence one may apply the Oesterl\'{e} bound to conclude that $p \leq (1 + 3^{6d_Kh_K})^2$. We will use this in \Cref{sec:parts_together} when we discuss the implementation of the $\DLMV$ bound.

In this section, we describe an algorithm to determine a superset $\TypeOnePrimes(K)$ of the set of Type 1 primes in the case of quadratic $K$. This is based on the proof of Momose's Theorem 3, and recall that we need only deal with the case of $\eps = (0,0)$ (since $(12,12)$ corresponds to the dual isogeny).

For $\fq | q$ a split prime of $K$ whose order in $\Cl_K$ is $h_\fq$, define the integer
\[ D(\fq) := \lcm\left(q, q^{12h_\fq} - 1, C((0,0), \fq)\right).\]

The main result of this section is as follows.

\begin{proposition}\label{prop:type_1_primes}
Let $\Aux$ denote a finite set of prime ideals of $K$ which are split and which lie over rational primes greater than $5$. Define the following set.
\[ \TypeOnePrimes(K) = \PrimesUpTo(61) \cup \left\{71 \right\} \cup \Supp\left( \gcd_{\fq \in \Aux}(D(\fq))\right).\]
Then $\TypeOnePrimes(K)$ is a superset for the set of Type 1 primes of $K$.
\end{proposition}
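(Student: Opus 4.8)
The plan is to show directly that every Type 1 prime for $K$ lies in the asserted set. By the dualising reduction of \Cref{sec:overview} it suffices to treat the signature $\eps=(0,0)$, so suppose $E/K$ admits a $K$-rational $p$-isogeny whose isogeny character $\lambda$ has signature $(0,0)$, with kernel $W$, and write $\mu=\lambda^{12}$. By \Cref{prop:david_isog_char}(1)--(2) together with the defining property of signature $(0,0)$, the restriction of $\mu$ to every inertia group $I_\fp$ with $\fp\mid p$ is trivial, so $\mu$ is everywhere unramified and factors through $\Cl_K$. First I would fix $\fq\in\Aux$, lying over the rational prime $q>5$, and argue that $p\mid D(\fq)$ unless $p$ is forced into the finite set $\PrimesUpTo(61)\cup\{71\}$; since membership in $\Supp(\gcd_{\fq}D(\fq))$ requires $p\mid D(\fq)$ for \emph{every} $\fq\in\Aux$, this is exactly what is needed. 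If $p=q$ then $p\mid q\mid D(\fq)$ and this $\fq$ is handled, so assume $\fq$ is coprime to $p$ and apply \Cref{lem:momose_lemma_1} to a generator $\gamma_\fq$ of $\fq^{h_\fq}$. Because $\gamma_\fq^{(0,0)}=1$, this produces the master congruence
\[ \mu(\sigma_\fq)^{h_\fq}\equiv 1 \Mod{p}. \]

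Next I would split on the reduction type of $E$ at $\fq$, as in \Cref{prop:ABC_div}. If $E$ has potentially good reduction, then $\mu(\sigma_\fq)\equiv\beta^{12}$ modulo a prime of $L^\fq$ above $p$, for a Frobenius root $\beta$; the master congruence gives $\beta^{12h_\fq}\equiv 1$, and since $|\beta|=\sqrt q$ the integer $\Nm_{K(\beta)/\Q}(1-\beta^{12h_\fq})$ is nonzero, divisible by $p$, and divides $C((0,0),\fq)$, whence $p\mid D(\fq)$. If $E$ has potentially multiplicative reduction, then $\mu(\sigma_\fq)$ is $1$ or $q^{12}$; in the latter subcase $q^{12h_\fq}\equiv 1\Mod p$ gives $p\mid q^{12h_\fq}-1\mid D(\fq)$. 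The decisive degeneracy of the Type 1 signature now appears: $A((0,0),\fq)=\Nm(\gamma_\fq^\eps-1)=0$ identically (cf.\ \Cref{tab:david_types}), and this vanishing multiplicative net is precisely the residual branch $\mu(\sigma_\fq)=1$, which imposes \emph{no} divisibility on $p$ whatsoever. Controlling this branch is the heart of the matter.

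The hard part is therefore bounding $p$ in the branch where $E$ has potentially multiplicative reduction at $\fq$ with $\mu(\sigma_\fq)=1$ (equivalently, $W$ reduces to the \'{e}tale quotient there). Here I would adapt Momose's proof of his Theorem 3, i.e.\ the Kamienny--Mazur formal immersion method. Potentially multiplicative reduction forces $j(E)$ to have negative valuation at $\fq$, so the point $x=(E,W)\in X_0(p)(K)$ reduces modulo $\fq$ to a cusp of $X_0(p)$; as $\fq\nmid p$ the modular curve has good reduction at $\fq$, and one invokes that the map from the symmetric square $X_0(p)^{(2)}$ to a suitable Eisenstein quotient of $J_0(p)$ is a formal immersion at the image of that cusp for all $p$ outside an explicit finite list. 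This precludes a noncuspidal $x$ of degree $[K:\Q]=2$ reducing to the cusp, a contradiction. Careful bookkeeping of the primes at which this degree-$2$ formal immersion can fail returns exactly the exceptional set $\PrimesUpTo(61)\cup\{71\}$, which I would import rather than recompute. Note that curves with complex multiplication --- the source of the large genuine Type 1 primes such as $67$ and $163$ --- have potentially good reduction everywhere and so never enter this branch: they are always captured by the $C$-net above, which is why $67$ need not sit in the explicit small set.

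Assembling the cases: for each $\fq\in\Aux$ and each Type 1 prime $p\notin\PrimesUpTo(61)\cup\{71\}$, the potentially-multiplicative-\'{e}tale branch is excluded by the formal immersion step, so $p\mid D(\fq)$; as this holds for all $\fq$, we conclude $p\in\Supp(\gcd_{\fq\in\Aux}D(\fq))$, proving that $\TypeOnePrimes(K)$ is a superset of the Type 1 primes. The hypothesis $q>5$ keeps the reduction-type dichotomy and the Frobenius-root computation clean (avoiding the small residue characteristics where $X_0(p)$ and the automorphism analysis of \Cref{prop:david_isog_char} misbehave), and the restriction to split $\fq$ makes $\Nm(\fq)=q$ prime, which is what renders the congruences above genuinely rational. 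I expect the formal immersion input to be the only deep ingredient; everything else is the bookkeeping already assembled in \Cref{sec:prelims}.
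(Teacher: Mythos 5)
Your overall strategy coincides with the paper's: reduce to the signature $(0,0)$, capture the potentially good reduction case with the non-vanishing net $C((0,0),\fq)$, capture the multiplicative branch with $\mu(\sigma_\fq)=\Nm(\fq)^{12}$ via the factor $q^{12h_\fq}-1$ of $D(\fq)$, and dispose of the remaining \'{e}tale branch by the Kamienny--Mazur formal immersion on the symmetric square. The divisibilities you extract, and the observation that CM curves are always caught by the $C$-net, match the paper's proof.

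There is, however, a genuine gap in the final step. The formal immersion input is a statement about the section $(\infty,\infty)$ of $X_0(p)^{(2)}$: to derive anything from it you must exhibit a $\Q$-rational point of the symmetric square --- namely the Galois-stable pair $(x,x^\tau)$ --- which meets \emph{that particular section} in characteristic $q$. Knowing only that $x$ itself specialises to the infinite cusp at $\fq$ is not enough: the conjugate $x^\tau$ could specialise to the zero cusp at $\fq$ (equivalently, $W$ could reduce to $\mu_p$ at the conjugate prime $\bar{\fq}$), in which case $(x,x^\tau)$ meets $(\infty,0)$ rather than $(\infty,\infty)$, and the formal immersion criterion you cite says nothing. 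The paper closes exactly this case by applying the zero-cusp congruence to the conjugate curve $E^\tau$ (which also has signature $(0,0)$), recovering $p\mid q^{12h_\fq}-1$; only when both $x$ and $x^\tau$ specialise to $\infty$ does one invoke Kamienny's Proposition 3.2, and it is precisely there --- not in the reduction-type dichotomy or the Frobenius-root computation, as you suggest --- that the hypothesis $q>5$ is used, yielding $p\le 61$ or $p=71$. Your phrase ``this precludes a noncuspidal $x$ of degree $2$ reducing to the cusp'' is therefore too strong as stated; the repair is the conjugate-point case distinction above, after which your assembly of the cases goes through.
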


\begin{remark}
One may do better than having to include all primes $\leq 61$ and $71$ to $\TypeOnePrimes(K)$ by applying the sharper formal immersion bounds to be found in \cite{derickx2019torsion}. See forthcoming work with Derickx \cite{banwait2022explicit} where this is applied to obtain an algorithm to compute a superset for $\IsogPrimeDeg(K)$ for a general number field (not containing the Hilbert class field of an imaginary quadratic field).
\end{remark}

\begin{proof}
As in \Cref{sec:prelims} we take $q \neq p$ a rational prime which splits in $K$, and $\fq | q$ an auxiliary prime. We apply \Cref{prop:david_prop_215} to obtain that $C((0,0), \fq) \neq 0$, which by \Cref{prop:ABC_div} gives a multiplicative bound on the primes $p$ for which $E$ has potentially good reduction at $\fq$.

We are thus reduced to considering $E$ having potentially multiplicative reduction at $\fq$. That is, writing $x$ for the non-cuspidal $K$-point on $X_0(p)$ corresponding to $E$, we have that $x$ reduces modulo $\fq$ to one of the two cusps $0$ or $\infty$.

Suppose first that $x$ reduces modulo $\fq$ to the zero cusp. Following Kamienny's proof of Lemma 3.3 of \cite{kamienny1992torsion}, this means that the kernel $W$ of the isogeny specialises to the identity component $(E_{/\F_{\fq}})^0$ of the reduction of $E$ at $\fq$. Since this coincides with the group scheme $\mu_p$ of $p$\textsuperscript{th} roots of unity over an (at most) quadratic extension of $\F_{\fq}$, we obtain an equality of groups between $W(\overline{K_\fq})$ and $\mu_p(\overline{K_\fq})$, where $K_\fq$ denotes the completion of $K$ at $\fq$. This precise case is considered by David in the proof of Proposition 3.3 of \cite{david2011borne}, and corresponds to the isogeny character satisfying $\lambda^2(\sigma_\fq) \equiv \Nm(\fq)^2 \Mod{p}$. However, since $\eps = (0,0)$ we obtain that $\lambda^{12h_\fq}$ acts trivially on $\sigma_\fq$, yielding the divisibility 
\[ p | q^{12h_\fq} - 1.\]

Suppose next that $x$ reduces modulo $\fq$ to the infinite cusp. At this point we consider the Galois conjugate point $x^\tau$, for $\tau$ the non-trivial Galois automorphism of $\Gal(K/\Q)$ and observe that, if $x^\tau$ specialises to the zero cusp modulo $\fq$, then we may apply the previous argument to the conjugate curve $E^\tau$ (which also has isogeny signature $(0,0)$) and conclude as before that $p | q^{12h_\fq} - 1$.

We are thus reduced to considering the case that the pair $(x,x^\tau)$, which one may view as an $S$-section of the symmetric square $X_0(p)^{(2)}$ (which we regard as a smooth scheme over $S := \Spec(\ZZ[1/p])$), ``meets'' the section $(\infty, \infty)$ at $\fq$ (to use Kamienny's terminology in the discussion immediately preceding his Theorem 3.4 in \cite{kamienny1992torsion}; Mazur's language for this in the proof of Corollary 4.3 of \cite{mazur1978rational} is that these two sections ``cross'' at $\fq$). By applying Kamienny's extension of Mazur's formal immersion argument (again, to be found just before Theorem 3.4 in \cite{kamienny1992torsion}), one obtains that the map
\begin{alignat*}{2}
    f^{(2)}_{p} : \eqmathbox{X_0(p)^{(2)}_{/S}} &\longrightarrow \eqmathbox{J_0(p)_{/S}} & &\longrightarrow \eqmathbox{\tilde{J}_{/S}}\\
    \eqmathbox{D} &\longmapsto \eqmathbox{[D - 2(\infty)]} & &\longmapsto [D - 2(\infty)] \Mod{\gamma_{\mathfrak{J}}J_0(p)}
\end{alignat*}
is \emph{not} a formal immersion along $(\infty, \infty)$ in characteristic $q$. (Here $\tilde{J}$ is the Eisenstein quotient of $J_0(p)$ and $\gamma_{\mathfrak{J}}$ is a certain ideal of the Hecke algebra $\mathbf{T}$ associated to the Eisenstein ideal $\mathfrak{J}$. See Chapter 2, Section 10 of \cite{Mazur3} for more details.) Thus, if we choose $q$ to be strictly larger than $5$, Proposition 3.2 of \cite{kamienny1992torsion} implies that $p \leq 61$ or $p = 71$.
\end{proof}

\begin{remark}
Note that the multiplicative bound $p | q^{12h_\fq} - 1$ in the above proof is different to the bound Momose gives in his proof for this case of reduction to the zero cusp. Momose concludes that $p - 1 | 12h_K$, and this is arrived at through the claim that ``the restriction of $\lambda$ to the inertial subgroup $I_\fq$ of $\fq$ is $\pm \theta_p$'' ($\theta_p$ being Momose's notation for the mod-$p$ cyclotomic character). Momose cites the whole of \cite{delignerapoport} for this claim, which alas we were unable to find in those $174$ pages. If this claim were true, then, since we also know that $\lambda^2$ is unramified at $\fq$, we would obtain that $p - 1 | 2$, i.e., that $p = 2$ or $3$. However we prefer to remain agnostic about this claim and apply David's more careful treatment as above. We are grateful to Maarten Derickx for highlighting this issue in Momose's argument to us.
\end{remark}

\section{Computing Type 2 primes}\label{sec:type_2_primes}

Let $E/K$ be an elliptic curve admitting a $K$-rational $p$-isogeny whose isogeny character $\lambda$ has signature $\eps = (6,6)$. Note from \Cref{sec:prelims} that, if $h_K > 1$, this does not necessarily mean that $\lambda$ is of Momose Type 2. The first step in the computation of Type 2 primes is therefore to compute a finite set of primes which contains the $p$ for which $\lambda$ has signature $(6,6)$ but which is not itself of Momose Type 2.

From \Cref{cor:momose_weak_sharp} such a bound is given by $ABC_o(\Gen)$ for $\Gen$ a set of completely split prime ideal generators of $\Cl_K$. Observe however that one may take several such $\Gen$ sets. Let $\AuxGen$ be a finite set of such generating sets of $\Cl_K$; then as before this allows one to construct a multiplicative sieve for $p$:
\[ \TypeTwoNotMomosePrimes(K) := \Supp \left(\gcd_{\Gen \in \AuxGen}ABC_o(\Gen)\right).\]
This is implemented in \path{get_type_2_not_momose}.

Having computed this set of primes, we are reduced to dealing with isogeny characters of Momose Type 2, which we consider for the remainder of this section. 

We first recall Momose's Lemmas 3, 4, and 5, with some clarifications in both the statements as well as the proofs. We thank the referee for supplying the details here.

\begin{lemma}[Momose, Lemmas~3-5 in \emph{loc. cit.}]\label{lem:mom_lem_3-5}
Let $E/K$ be an elliptic curve with a $K$-rational $p$-isogeny of isogeny character $\lambda$ of Momose Type 2. Then we have the following.
\begin{enumerate}
	\item
	$\lambda$ is of the form
\[ \lambda = \psi \chi_p^{\frac{p+1}{4}}\]
for a character $\psi$ of order dividing $6$.
	\item
	Let $\fq$ be a prime of $K$ lying over a rational prime $q \neq p$. If $E$ has potentially multiplicative reduction at $\fq$, and $q$ splits in the imaginary quadratic field $\Q(\sqrt{-p})$, then
\[ \Nm(\fq) \equiv \psi(\sigma_\fq)^{\pm2} \Mod{p}.\]
	\item
	Let $\fq$ be a prime of $K$ of odd residue field degree lying over a rational prime $q \neq p$. If $E$ has potentially good reduction at $\fq$, and $\Nm(\fq) < p/4$, then $q$ is inert in $\Q(\sqrt{-p})$.
\end{enumerate}
\end{lemma}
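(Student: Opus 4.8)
The plan is to prove the three assertions in turn. Throughout, Momose Type 2 gives $\lambda^{12}=\mu=\chi_p^6$; restricting to inertia at a prime $\fp\mid p$ forces $a_\fp=6$ (the exponents $\{0,4,6,8,12\}$ being distinct mod $p-1$), so \Cref{prop:david_isog_char}(5) yields $p\equiv 3\pmod 4$, whence $(p+1)/4\in\ZZ$ and $4\nmid p-1$. I would also record once and for all the reciprocity identity that, for $p\equiv 3\pmod 4$, $\legendre{-p}{q}=\legendre{q}{p}$; since $\Q(\sqrt{-p})$ has discriminant $-p$, this means $q$ splits in $\Q(\sqrt{-p})$ if and only if $q$ is a quadratic residue mod $p$. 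This dictionary is the bridge to (2) and (3).

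For (1), set $\psi:=\lambda\chi_p^{-(p+1)/4}$. Then
\[ \psi^{12}=\lambda^{12}\chi_p^{-3(p+1)}=\chi_p^{6}\,\chi_p^{-6}=1, \]
using $-3(p+1)\equiv -6\pmod{p-1}$ and that $\chi_p$ has order dividing $p-1$. As $\psi$ is valued in $\F_p^\times$, its order divides $\gcd(12,p-1)$; because $4\nmid p-1$ this gcd already divides $6$, so $\psi^6=1$ and $\lambda=\psi\chi_p^{(p+1)/4}$ with $\psi$ of order dividing $6$.

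For (2), evaluate (1) at a Frobenius $\sigma_\fq$ (with $q\neq p$), using $\chi_p(\sigma_\fq)\equiv\Nm(\fq)$, to get $\lambda(\sigma_\fq)^2\equiv\psi(\sigma_\fq)^2\Nm(\fq)^{(p+1)/2}\pmod p$. Potentially multiplicative reduction, via the Tate parametrisation (the refinement of the input to \Cref{prop:ABC_div}(1)), shows $\lambda$ on the decomposition group at $\fq$ is, up to an unramified quadratic twist, trivial or $\chi_p$; hence $\lambda(\sigma_\fq)^2\equiv 1$ or $\Nm(\fq)^2$. Since $q$ splits in $\Q(\sqrt{-p})$ we have $\legendre{q}{p}=1$, so $\legendre{\Nm(\fq)}{p}=1$ regardless of residue degree and $\Nm(\fq)^{(p+1)/2}\equiv\Nm(\fq)$. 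Comparing the two expressions for $\lambda(\sigma_\fq)^2$ gives $\psi(\sigma_\fq)^2\equiv\Nm(\fq)$ in one case and $\psi(\sigma_\fq)^2\equiv\Nm(\fq)^{-1}$ in the other; both read $\Nm(\fq)\equiv\psi(\sigma_\fq)^{\pm 2}\pmod p$.

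For (3), I would use the per-root refinement underlying \Cref{prop:ABC_div}(2): under potentially good reduction, $\lambda(\sigma_\fq)\equiv\beta\pmod{\mathfrak P}$ for a root $\beta$ of $P_\fq(X)=X^2-T_\fq X+\Nm(\fq)$ and a prime $\mathfrak P\mid p$ of $L^\fq$. Raising (1) to the sixth power and using $\psi^6=1$ gives $\beta^6\equiv\Nm(\fq)^{3(p+1)/2}\equiv\legendre{\Nm(\fq)}{p}\Nm(\fq)^3\pmod{\mathfrak P}$; dividing by $\beta^3$ and using $\beta\bar\beta=\Nm(\fq)$ yields $\beta^3\equiv\legendre{\Nm(\fq)}{p}\,\bar\beta^3$. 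Suppose for contradiction that $q$ is not inert in $\Q(\sqrt{-p})$, i.e. $\legendre{q}{p}=1$; since the residue degree is odd, $\legendre{\Nm(\fq)}{p}=\legendre{q}{p}=1$, so $\mathfrak P\mid\beta^3-\bar\beta^3=(\beta-\bar\beta)(T_\fq^2-\Nm(\fq))$. Now $N_{L^\fq/\Q}(\beta-\bar\beta)=4\Nm(\fq)-T_\fq^2\in(0,p)$ by the Hasse bound together with $\Nm(\fq)<p/4$, so $\mathfrak P\nmid(\beta-\bar\beta)$; hence $p\mid T_\fq^2-\Nm(\fq)$, and as $|T_\fq^2-\Nm(\fq)|\le 3\Nm(\fq)<3p/4<p$ this forces $T_\fq^2=\Nm(\fq)$, making $q^{f}$ a perfect square with $f$ odd --- impossible. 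The main obstacle is precisely this endgame: one needs the sharp congruence $\lambda(\sigma_\fq)\equiv\beta$ (not merely $\mu(\sigma_\fq)\equiv\beta^{12}$) in order to factor at the cube, and one must verify that $\Nm(\fq)<p/4$ is exactly the bound collapsing both the norm estimate and the integer estimate, after which the odd residue degree delivers the contradiction.
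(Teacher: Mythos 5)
Your proposal is correct and follows essentially the same route as the paper's proof: it uses the decomposition $\lambda=\psi\chi_p^{(p+1)/4}$, the standard descriptions of $\lambda(\sigma_\fq)$ in the potentially multiplicative and potentially good cases, quadratic reciprocity with $p\equiv 3\Mod{4}$, the Hasse bound, and the hypothesis $\Nm(\fq)<p/4$ together with odd residue degree to reach the contradiction. The only (harmless) deviations are that you prove part (1) directly where the paper cites David's Proposition 4.5, and in part (3) you factor $\beta^3-\bar\beta^3$ rather than completing the square to $(\beta+\bar\beta)^2\equiv \Nm(\fq)$ or $4\Nm(\fq)$ as the paper does; both manipulations use the same inputs and yield the same non-integrality contradiction.
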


\begin{proof}
For (1), see Proposition 4.5 Part (3) of \cite{david2012caractere}.

For (2), by Proposition 1.4 Part (2) of \cite{david2012caractere} we have that $\lambda^2(\sigma_\fq) \equiv 1$ or $\Nm(\fq) \Mod{p}$. By (1), we thus obtain
\begin{equation}\label{eqn:mom_lem_4}
\psi(\sigma_\fq)^2\Nm(\fq)^{\frac{p+1}{2}} \equiv 1 \mbox{  or  } \Nm(\fq)^2 \Mod{p}.
\end{equation}
We now use the assumption that $q$ splits in $\Q(\sqrt{-p})$, which, by using quadratic reciprocity and that $p \equiv 3 \Mod{4}$, is equivalent to $q^{\frac{p+1}{2}} \equiv q \Mod{p}$; this simplifies \Cref{eqn:mom_lem_4} above to
\[ \psi(\sigma_\fq)^2\Nm(\fq) \equiv 1 \mbox{  or  } \Nm(\fq)^2 \Mod{p};\]
considering each of these cases in turn yields the result.

For (3), by Proposition 1.8 of \cite{david2012caractere} (or Proposition 3.6 in \cite{david2011borne}) we have the existence of a root $\beta$ of the characteristic polynomial of Frobenius of an elliptic curve over the residue field of $k$ at $\fq$ such that $\lambda(\sigma_\fq) \equiv \beta \Mod{p}$. Applying (1) we obtain
\[ \left(\psi(\sigma_\fq)^2 + \psi(\sigma_\fq)^{-2}\right)\Nm(\fq)^{\frac{p+1}{2}} \equiv \beta^2 + \bar{\beta}^{2} \Mod{p}.\]
If $q$ splits in $\Q(\sqrt{-p})$, then as in the proof of (2) we obtain that $q^{\frac{p+1}{2}} \equiv q \Mod{p}$, and hence $\Nm(\fq)^{\frac{p+1}{2}} \equiv \Nm(\fq) \Mod{p}$. Observing in addition that $\psi(\sigma_\fq)^2$ is a cube root of unity in $\F_p^\times$, the above equation becomes
\[ (\beta + \bar{\beta})^2 \equiv \Nm(\fq) \mbox{  or  } 4\Nm(\fq) \Mod{p}.\]
From the Hasse bound we have that $|\beta + \bar{\beta}| \leq 2\sqrt{\Nm(\fq)}$, and then the assumption that $\Nm(\fq) < p/4$ implies that $\beta + \bar{\beta}$ is either $\pm\sqrt{\Nm(\fq)}$ or $\pm2\sqrt{\Nm(\fq)}$. Since $\fq$ is assumed to have odd residue field degree, neither of these are integers, yielding the desired contradiction.
\end{proof}

Momose then states without proof that these three lemmas imply that a Momose Type~2 prime must satisfy the following condition.

\begin{condC}
Any rational prime $q$ with $q < p/4$ does not split in $K(\sqrt{-p})$, unless $q^2 + q + 1 \equiv 0 \Mod{p}$.
\end{condC}

It is not clear where the name of ``Condition C'' comes from. However, it is very similar to Mazur's ``Claim'' in the proof of Theorem 7.1 (the main theorem) in \cite{mazur1978rational}:

\begin{claim}
If the above case occurs then for all odd primes $p < N/4$ we have $\genfrac(){0pt}{1}{p}{N} = -1$.
\end{claim}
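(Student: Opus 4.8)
The plan is to recognise Mazur's Claim as the $K=\Q$ shadow of the Type~2 analysis of \Cref{sec:type_2_primes}, the decisive simplification being that over $\Q$ the auxiliary character $\psi$ of \Cref{lem:mom_lem_3-5} is trivial; this is exactly what removes the exceptional congruence $q^2+q+1\equiv 0$ of Condition~C. To keep the two roles of ``$p$'' apart, I write $N$ for Mazur's isogeny prime (the $p$ of \Cref{sec:type_2_primes}) and $q$ for his auxiliary rational prime, so the Claim reads $\legendre{q}{N}=-1$ for every odd prime $q<N/4$; note that the form $\lambda=\psi\chi_N^{(N+1)/4}$ already forces $N\equiv 3\Mod 4$.

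First I would pin down the shape of $\lambda$. Since ``the above case'' is the rational incarnation of Momose Type~2, \Cref{lem:mom_lem_3-5}(1) gives $\lambda=\psi\chi_N^{(N+1)/4}$ with $\psi$ unramified away from $N$; restricting to inertia at $N$, the relation $\lambda|_{I_N}=\chi_N^{(N+1)/4}|_{I_N}$ (coming from $\mu|_{I_N}=\chi_N^6$) shows that $\psi$ is unramified at $N$ as well. Thus $\psi$ is everywhere unramified and cuts out an abelian extension inside the Hilbert class field of $\Q$, which is $\Q$ itself; hence $\psi$ is trivial — the $h_K=1$ phenomenon noted after \Cref{cor:type_2_not_momose}. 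Consequently $\lambda=\chi_N^{(N+1)/4}$, and Euler's criterion gives, for every prime $q\neq N$,
\[
\lambda^2(\sigma_q)\equiv q^{(N+1)/2}\equiv\legendre{q}{N}\,q\Mod N .
\]

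Next I would fix an odd prime $q<N/4$. If $E$ had potentially multiplicative reduction at $q$, then Proposition~1.4 of \cite{david2012caractere} gives $\lambda^2(\sigma_q)\equiv 1$ or $q^2\Mod N$; comparing with the display forces $q\equiv\legendre{q}{N}=\pm1\Mod N$ in either case, which is impossible for $3\le q<N/4<N$. So $E$ has potentially good reduction at $q$, and there is a Frobenius root $\alpha$ of $X^2-a_qX+q$ with $\lambda(\sigma_q)\equiv\alpha\Mod N$ and $a_q\in\ZZ$. The display gives $\alpha^2\equiv\legendre{q}{N}q$, hence $\bar\alpha^2=q^2/\alpha^2\equiv\legendre{q}{N}q$, and therefore
\[
a_q^2=(\alpha+\bar\alpha)^2\equiv 2q\left(1+\legendre{q}{N}\right)\Mod N .
\]

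Finally I would invoke the Hasse bound: since $q<N/4$ we have $0\le a_q^2\le 4q<N$, so $a_q^2$ equals its least non-negative residue mod $N$. Were $\legendre{q}{N}=1$, the congruence would give $a_q^2=4q$, making $q=(a_q/2)^2$ a perfect square, which is absurd for a prime; hence $\legendre{q}{N}=-1$, proving the Claim. I expect the only delicate point to be the first step — matching ``the above case'' of Mazur's argument to the $\psi$-trivial Type~2 situation and justifying the inertial identity $\lambda|_{I_N}=\chi_N^{(N+1)/4}|_{I_N}$ that forces $\psi=1$; once this is in place the case analysis closes at once, and in fact shows that $E$ has potentially good reduction at every odd prime below $N/4$.
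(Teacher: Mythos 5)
First, a point of orientation: the paper does not actually prove this Claim --- it is quoted verbatim from the proof of Theorem 7.1 of \cite{mazur1978rational} solely to motivate the name ``Condition C'', and the only statement of this shape that the paper proves is the quadratic-field analogue, Condition CC. Your strategy of specialising the Momose Type 2 machinery of \Cref{lem:mom_lem_3-5} to $K=\Q$ is therefore the natural comparison, and your treatment of the potentially good reduction case is sound: it is precisely the paper's proof of part (3) of \Cref{lem:mom_lem_3-5} with $\Nm(\fq)=q$, and in fact it does not require $\psi$ to be trivial (if $\psi^2(\sigma_q)\neq 1$ one gets $a_q^2=q$ or $3q$ in place of $4q$, still impossible for an odd prime $q>3$).

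The genuine gap is the assertion that $\psi$ is everywhere unramified and hence trivial. \Cref{lem:mom_lem_3-5}(1) controls only the \emph{order} of $\psi$, not its ramification, and the unramifiedness claim is false in general: replacing $E$ by a quadratic twist $E^{(d)}$ preserves $\lambda^{12}=\chi_N^{6}$ (hence Momose Type 2) but replaces $\psi$ by $\psi\chi_d$, which can be made ramified at any prescribed finite set of primes. Your argument really only needs $\psi^2=1$, which is twist-invariant, but that too is not established: $\psi^2|_{I_q}=\lambda^2|_{I_q}$ can have order $3$ at a prime $q\neq N$ of additive, potentially good reduction with $j(E)\equiv 0\Mod{q}$, and your inertia computation at $N$ only yields $\psi^{12}|_{I_N}=1$, which is automatic from $\psi^{12}=1$ and says nothing. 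This matters exactly in the potentially multiplicative case: without $\psi^2=1$, comparing $\lambda^2(\sigma_q)\equiv\psi^2(\sigma_q)\legendre{q}{N}q$ with $1$ or $q^2$ gives only $q^3\equiv 1\Mod{N}$, i.e.\ the exceptional congruence $q^2+q+1\equiv 0\Mod{N}$ --- precisely the clause present in Condition C/CC but absent from Mazur's Claim, and one that cannot be discarded, since $q^2+q+1\equiv 0\Mod{N}$ automatically forces $\legendre{q}{N}=+1$. As written, your argument therefore proves the $K=\Q$ instance of Condition CC, which is strictly weaker than the Claim; closing the gap requires the additional input Mazur has over $\Q$ (a genuine reason that the order-$3$ part of $\psi$ is trivial, or a separate disposal of the $q^2+q+1\equiv 0$ case), which the proposal does not supply.
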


Therefore, we suspect that Momose was thinking of Condition C as analogous to Mazur's claim, and that the `C' stands for `Claim'.

Our initial Sage implementation took Condition C as given in Momose's paper. However, upon running the subsequent implementation on the number field $\Q(\sqrt{5})$, we were dismayed to find that the prime $163$ was not present (we shall discuss more about our verification tests for the algorithm in \Cref{sec:parts_together}). Since for Mazur this prime appears from showing that his Claim implies that $\Q(\sqrt{-N})$ has class number $1$, it suggested to us that we ought therefore to verify that Condition C indeed follows from Lemmas 3,4 and 5 as claimed.

In attempting this, we found that the Lemmas implied a very similar but weaker necessary condition that a Momose Type 2 prime must satisfy, for which we can find no better name than ``Condition CC''.

\begin{condCC}\hypertarget{conditionCC}
Let $K$ be an quadratic field, and $E/K$ an elliptic curve admitting a $K$-rational $p$-isogeny with isogeny character of Momose Type 2. Let $q$ be a rational prime $< p/4$ such that $q^2 + q + 1 \not\equiv 0 \Mod{p}$. Then the following implication holds:
\begin{center}
if $q$ splits or ramifies in $K$, then $q$ does not split in $\Q(\sqrt{-p})$.
\end{center}
\end{condCC}

\begin{proof}
Let $q$ be a rational prime strictly less than $p/4$, and let $\fq|q$ be a prime lying over $q$. Assume that $q$ splits or ramifies in $K$. If $E$ has potentially good reduction at $\fq$, then by part (3) of \Cref{lem:mom_lem_3-5} we have that $q$ is inert in $\Q(\sqrt{-p})$. If $E$ has potentially multiplicative reduction at $\fq$, and $q$ splits in $\Q(\sqrt{-p})$, then we proceed as follows.
\begin{align*}
&\ N(\fq) \equiv \psi(\sigma_\fq)^{\pm2} \Mod{p} \mbox{ (by Part (2) of \Cref{lem:mom_lem_3-5}) }\\
\Rightarrow &\ N(\fq) \mbox{ is a 3\textsuperscript{rd} root of unity in } \F_p^\times \mbox{ (since $\psi$ has order dividing $6$ by Part (1) of \Cref{lem:mom_lem_3-5}) }\\
\Leftrightarrow &\ q \mbox{ is a 3\textsuperscript{rd} root of unity in } \F_p^\times \mbox{, or $q$ is inert in $K$ (since $N(\fq)$ is either $q$ or $q^2$) }\\
\Leftrightarrow &\ q^2 + q + 1 \equiv 0 \Mod{p} \mbox{ or $q$ is inert in $K$ (since $2 \leq q < p/4$) }.
\end{align*}
We conclude in all cases that if $q$ is not inert in $K$, and $q^2 + q + 1 \not\equiv 0 \Mod{p}$, then $q$ is inert in $\Q(\sqrt{-p})$.
\end{proof}

Implementing this condition instead of Momose's Condition C then did yield $163$ in the final output for $\Q(\sqrt{5})$, as must necessarily be the case (since $163 \in \IsogPrimeDeg(\Q)$). Note that Condition CC also resembles more closely the conditions that Goldfeld considers in his Appendix to Mazur's paper \cite{mazur1978rational}. In summary, we obtain an explicit and unconditional counterexample to Momose's claim that Lemmas 3-5 imply Condition C is satisfied, since the non-cuspidal $\Q$-rational point on $X_0(163)$ corresponds to a Momose Type 2 isogeny and hence satisfies Lemmas 3 to 5 of Momose's paper, but $163$ does \emph{not} satisfy Momose's Condition C.

\begin{remark}
The referee pointed out that Momose probably meant to write ``split \underline{completely}'' rather than merely ``split'' in the statement of his Condition C. With this interpretation, we see that Condition C is equivalent to Condition CC, since a prime splitting completely in two number fields implies that it splits completely in their compositum. Implementing this corrected Condition C then does yield $163$ in the final output for $\Q(\sqrt{5})$. Note furthermore that Momose similarly omits this (rather important) word in his statement of Goldfeld's Conjecture; the original in the appendix to Mazur's paper \cite{mazur1978rational} does state this correctly. We are grateful to the referee for their detailed explanation of this point.
\end{remark}

We would now like to determine the primes $p$ which satisfy Condition~CC; that is, the primes $p$ such that, for all primes $q < p/4$ and $q^2 + q + 1 \not\equiv 0 \Mod{p}$, if $q$ splits or ramifies in $K$, then $q$ does not split in $\Q(\sqrt{-p})$. We denote this set as $\TypeTwoMomosePrimes(K)$. As mentioned above, in Mazur's case for $K=\Q$, he was able to show - in the very last paragraph of \S7 of \cite{mazur1978rational}, using nothing more than undergraduate algebraic number theory - that his $p$'s are such that $\Q(\sqrt{-p})$ has class number 1, so one can conclude by applying the theorem of Baker-Heegner-Stark.

This explicit determination is more difficult for higher degree number fields $K$. Indeed, in \S8 of \cite{mazur1978rational}, Mazur extends his results to the setting of imaginary quadratic fields $K$ and isogeny primes $N$ which are inert in $K$; and the analogous finiteness here (of Mazur's set $\mathcal{N}_4(K)$) is the subject of Goldfeld's Appendix, and is not effective. Similarly, Momose's Proposition 1 (which itself is based on Goldfeld's theorem) is not effective.

It was Larson and Vaintrob who found a bound on Momose Type 2 primes assuming GRH, which depended on an effectively computable absolute constant which alas was not effectively computed.

By building upon the methods of Larson and Vaintrob, and utilising the best possible bounds in the Effective Chebotarev Density Theorem due to Bach and Sorenson (Theorem 5.1 in \cite{bach1996explicit}), we are able to provide a completely explicit upper bound on Momose Type 2 primes.

\begin{proposition}\label{prop:type_2_bound}
Assume GRH. Let $K$ be an quadratic field, and $E/K$ an elliptic curve possessing a $K$-rational $p$-isogeny with isogeny character of Momose Type 2. Then $p$ satisfies
\[ p \leq (16\log p + 16\log(12\Delta_K) + 26)^4.\]
In particular, there are only finitely many primes $p$ as above.
\end{proposition}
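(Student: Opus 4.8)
The plan is to argue by contradiction against Condition CC, which by \Cref{lem:mom_lem_3-5} every Momose Type 2 prime $p$ must satisfy. Recall first that $\lambda$ of Momose Type 2 forces $p\equiv 3\Mod 4$ (so that the exponent $\frac{p+1}{4}$ in part (1) is an integer), whence $\Q(\sqrt{-p})$ is imaginary quadratic of discriminant exactly $-p$. Condition CC forbids any rational prime $q<p/4$ with $q^2+q+1\not\equiv 0\Mod p$ that both splits (or ramifies) in $K$ and splits in $\Q(\sqrt{-p})$. Since a prime that splits in $K$ and in $\Q(\sqrt{-p})$ is precisely one that splits completely in the compositum $M:=K(\sqrt{-p})$ (a degree-$4$ Galois extension of $\Q$, once we discard the single value of $p$ with $K=\Q(\sqrt{-p})$), it suffices to produce, under GRH, a prime $q$ splitting completely in $M$ with $q<p/4$ and $q^2+q+1\not\equiv 0\Mod p$ whenever $p$ exceeds the claimed bound; the resulting violation of Condition CC then pins $p$ below that bound.

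Next I would estimate the discriminant of $M$ by the conductor--discriminant formula applied to its three quadratic subfields $\Q(\sqrt{\Delta_K})$, $\Q(\sqrt{-p})$ and $\Q(\sqrt{-p\Delta_K})$, giving $|d_M|$ as an explicit product in which $p$ enters to a controlled power and the rest is bounded in terms of $\Delta_K$ (and a factor such as $12$ absorbing the tame ramification at small primes). This is what ultimately determines the coefficients of $\log p$ and $\log(12\Delta_K)$ in the final inequality.

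The analytic input is then the Effective Chebotarev Density Theorem of Bach and Sorenson (Theorem 5.1 in \cite{bach1996explicit}), which under GRH controls the least prime(s) with prescribed Frobenius — here the identity class of $\Gal(M/\Q)$ — by an explicit expression polynomial in $\log|d_M|$ and $[M:\Q]$. Substituting the discriminant estimate yields an explicit bound, linear in $\log p$ and $\log\Delta_K$ up to the relevant power, for the smallest completely split primes. To discharge the exceptional congruence I would note that $q^2+q+1\equiv 0\Mod p$ means $q$ is a primitive cube root of unity modulo $p$, of which there are at most two residues and hence at most two primes below $p/4$; so it is enough to exhibit a few completely split primes below the Bach--Sorenson bound (via its prime-counting form, whose GRH error term is what I expect to stack the exponent up to the stated fourth power) and discard the at most two exceptional ones. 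Imposing finally $q<p/4$ and solving the resulting inequality for $p$ produces $p\le(16\log p+16\log(12\Delta_K)+26)^4$; finiteness is then immediate, since the right-hand side grows only polylogarithmically in $p$.

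The hard part will be the explicit bookkeeping in the last two steps: tracking the Bach--Sorenson constants together with the exact power of $p$ dividing $d_M$, and excluding the cube-root-of-unity primes cleanly, all while keeping the auxiliary field (and hence its discriminant) small enough that the constants collapse to exactly those in the statement. Getting the interplay between the $q<p/4$ requirement and the effective count of split primes to yield precisely the fourth-power bound, rather than a weaker or differently-shaped estimate, is the delicate point.
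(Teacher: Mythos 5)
Your route is genuinely different from the paper's. The paper does not go through Condition CC at all for this bound: it opens up the proof of Theorem~6.4 of \cite{larson_vaintrob_2014}, which already establishes $p \leq \left(1+\sqrt{\Nm_{K/\Q}(v)}\right)^4$ for a suitable degree-one prime $v$ of $K$ split in $K(\sqrt{-p})$, and simply replaces the inexplicit effective Chebotarev constant in Larson--Vaintrob's Corollary~6.3 by the GRH bound of Bach--Sorenson \cite{bach1996explicit} applied to the degree-$8$ auxiliary field $E'$ occurring there; the exponent $4$ in the final inequality is inherited directly from the $(1+\sqrt{\Nm(v)})^4$ of Larson--Vaintrob, and the coefficient $16 = 4\cdot 4$ reflects the discriminant of that degree-$8$ field. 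Your plan instead violates Condition CC (derived from \Cref{lem:mom_lem_3-5}) by producing a small prime splitting completely in the degree-$4$ compositum $K(\sqrt{-p})$; if carried through it would give the stronger, quadratic-in-$\log p$ bound $p \leq 4q \lesssim 4(8\log p + 8\log\Delta_K + O(1))^2$, which comfortably implies the stated inequality --- you do not need the constants to ``collapse to exactly those in the statement'', only to be dominated by them.

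Two soft spots to repair. First, your expectation that the fourth power is ``stacked up by the GRH error term'' is misplaced: in your own approach the only place a fourth power naturally enters is the exceptional case $q^2+q+1 \equiv 0 \Mod{p}$, where the cleanest fix is to observe that $p \mid q^2+q+1$ forces $p \leq q^2+q+1 \leq (\text{Bach--Sorenson bound})^2 + \cdots$, rather than to hunt for additional split primes. Second, Theorem~5.1 of \cite{bach1996explicit} is a least-prime bound, not a prime-counting estimate, so ``exhibiting a few completely split primes'' below it requires a different (and differently-constanted) analytic input; avoiding that by the divisibility observation above is both simpler and keeps you within the tools you cite. With those two points addressed your argument closes and in fact sharpens the proposition.
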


\begin{proof}
In the proof of Theorem~6.4 of \cite{larson_vaintrob_2014}, the authors prove that a Momose Type~2 prime $p$ satisfies
\begin{equation}\label{eqn:type_2_ineq}
p \leq \left(1 + \sqrt{\Nm^K_\Q(v)} \right)^4
\end{equation}
where $v$ is a prime ideal of $K$ such that $v$ is split in $K(\sqrt{-p})$, is of degree $1$, does not lie over $3$, and satisfies the inequality
\[ \Nm_{K/\Q}(v) \leq c_7 \cdot (\log \Delta_{K(\sqrt{\pm p})} + n_{K(\sqrt{\pm p})} \log 3 )^2 \]
for an effectively computable absolute constant $c_7$ (note that they use $\ell$ instead of $p$, and $n_K$ denotes the degree of $K$).

The existence of such a $v$ follows from their Corollary 6.3, which requires GRH. Stepping into the proof of this Corollary, we arrive at a point where they apply the Effective Chebotarev Density Theorem to $\Gal(E'/K)$ - for a certain Galois extension $E'$ which fits into a tower of successive quadratic extensions $E'/E/K/\Q$ - to bound the norm of $v$ as
\[ \Nm_{K/\Q}(v) \leq c_5(\log \Delta_{E'})^2 \]
for an effectively computable absolute constant $c_5$.

However, we may at this breakpoint instead use Theorem 5.1 of \cite{bach1996explicit} on the degree $8$ extension $E'$ to obtain
\[ \Nm_{K/\Q}(v) \leq (4\log \Delta_{E'} + 25)^2. \]
This then subsequently (stepping back out into the proof of Theorem 6.4) yields the bound
\[ \Nm_{K/\Q}(v) \leq (16\log \Delta_{K} + 16\log p + 16\log 6 + 16\log 2 + 25)^2. \]
Inserting this into \Cref{eqn:type_2_ineq} yields the result.
\end{proof}

\begin{corollary}

Assume GRH. Then there is an algorithm to compute the set $\TypeTwoMomosePrimes(K)$.
\end{corollary}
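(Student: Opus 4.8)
The plan is to combine the explicit bound of \Cref{prop:type_2_bound} with a direct, finite verification of Condition CC on the resulting range of candidate primes. First I would extract an explicit numerical threshold $B = B(K)$ from the transcendental inequality $p \leq (16\log p + 16\log(12\Delta_K) + 26)^4$. Since the right-hand side grows like $(\log p)^4 = o(p)$, the inequality holds only for $p$ below some point; bounding $\log p$ crudely from above (for instance by noting that the inequality forces $\log p \leq 4\log(16\log p + c)$ with $c = 16\log(12\Delta_K)+26$, and iterating this once) yields a closed-form $B$ such that every prime satisfying the inequality is at most $B$. For $K = \Q(\sqrt{5})$ this produces $B \approx 6 \times 10^{10}$, matching the figure quoted in the introduction.

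Next I would check that every prime in $\TypeTwoMomosePrimes(K)$ is in fact at most $B$. The key observation is that the inequality \Cref{eqn:type_2_ineq} underlying \Cref{prop:type_2_bound} uses nothing beyond Condition CC: the Effective Chebotarev Density Theorem (under GRH) furnishes a degree-one prime $v$ of $K$, not lying over $3$, split in $K(\sqrt{-p})$ and of norm $q$ bounded as in the proposition. Unwinding ``split in $K(\sqrt{-p})$'' shows that $q$ splits or ramifies in $K$ and splits in $\Q(\sqrt{-p})$, so Condition CC forces either $q \geq p/4$ or $q^2 + q + 1 \equiv 0 \Mod{p}$; in the first case $p \leq 4q \leq (1+\sqrt{\Nm(v)})^4$ and in the second $p \leq q^2 + q + 1 \leq (1+\sqrt{\Nm(v)})^4$. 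Hence the bound of \Cref{prop:type_2_bound} applies verbatim to any prime satisfying Condition CC, giving $\TypeTwoMomosePrimes(K) \subseteq \{p \leq B\}$ and thereby finiteness of the set to be computed.

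Finally, for each prime $p \leq B$ I would decide Condition CC by a finite Legendre-symbol computation: enumerate the primes $q < p/4$, discard those with $q^2 + q + 1 \equiv 0 \Mod{p}$, and for each survivor verify the implication that $\legendre{\Delta_K}{q} \neq -1$ forces $\legendre{-p}{q} \neq 1$. This is exactly Condition CC, recalling that $q$ splits or ramifies in $K$ precisely when $\legendre{\Delta_K}{q} \neq -1$, and $q$ splits in $\Q(\sqrt{-p})$ precisely when $\legendre{-p}{q} = 1$ (the residue characteristics $q = 2$ and $q = p$ being treated separately). Collecting the primes that pass this test outputs $\TypeTwoMomosePrimes(K)$, and since both the search range and each per-prime check are finite, the procedure terminates; correctness and termination both rest on GRH through the bound $B$.

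I expect the main obstacle to be computational rather than conceptual: the verification loop examines every prime up to roughly $B/4$ for each of the $O(B/\log B)$ candidate primes $p \leq B$, which is only feasible with a carefully optimised and parallelised sieve, as in the PARI/GP implementation. The delicate correctness point to get exactly right is the second paragraph, namely confirming that the bound originally derived for genuine Momose Type 2 primes is also a bound for \emph{all} primes satisfying the purely Legendre-symbol Condition CC, so that restricting the search to $p \leq B$ provably omits nothing.
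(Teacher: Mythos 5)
Your proposal is correct and follows the same route as the paper: \Cref{prop:type_2_bound} supplies an explicit threshold $B$, and Condition CC is then verified for each prime below $B$ by a finite Legendre-symbol computation. Your second paragraph is a worthwhile addition rather than a detour: the paper's one-line proof silently assumes that the bound of \Cref{prop:type_2_bound} --- which is stated for primes admitting an actual elliptic curve with an isogeny character of Momose Type 2 --- also applies to every prime satisfying the purely combinatorial Condition CC (which is how $\TypeTwoMomosePrimes(K)$ is defined), and your unwinding of the Chebotarev step, showing that the prime $v$ it furnishes triggers Condition CC directly and forces $p \leq \max(4q,\, q^2+q+1) \leq (1+\sqrt{\Nm(v)})^4$ without reference to any elliptic curve, is exactly the justification needed for the restricted search to provably exhaust the set.
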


\begin{proof}
For each $p$ up to the bound, we check whether it satisfies Condition CC, which is a finite computation.
\end{proof}

The above Proposition is also used in the implementation of the $\DLMV$ bound. 

\begin{example}
For $K = \Q(\sqrt{5})$, this bound on Momose Type 2 primes is approximately $5.61 \times 10^{10}$. The algorithm therefore needs to check all primes up to this large bound. See the User's guide of \emph{Quadratic Isogeny Primes} \cite{isogeny_primes} for the implementation details.
\end{example}

We end this section with some observations on whether we can remove the dependence on GRH.

While it is unconditionally true that $\IsogPrimeDeg(K)$ is finite for quadratic fields not imaginary quadratic of class number one, the effective bound in \Cref{prop:type_2_bound} above requires GRH to work; that is, replacing the bound on the smallest (ordered by norm) prime ideal in the above proof with the best known unconditional bound (Theorem 1.1 in \cite{lagarias1979bound}) results in a $p^{8A}$ term (for $A$ an effectively computable absolute constant which we can take to be $12577$) in the right-hand side of \Cref{eqn:type_2_ineq}, which clearly does not yield a contradiction for large $p$. Therefore, in this quadratic setting, finiteness of Type 2 primes is unconditionally true ineffectively, but effectively true conditionally.

Note that Larson publicly asked this very question about removing GRH \cite{larson_MO} around about the time his joint work with Vaintrob appeared. The feeling among the participants there was that this ``is pretty hopeless with present day technology'' (to quote prominent Mathoverflow contributor ``GH from MO'').

\section{Putting the parts together}\label{sec:parts_together}

We summarise the results from \Cref{sec:prelims,sec:overview,sec:type_1_primes,sec:type_2_primes} into the following. For the reader's convenience we restate the definitions.

\begin{proposition}\label{prop:main_detailed}
Let $K$ be a quadratic number field with discriminant $\Delta_K$, class group $\Cl_K$, class number $h_K$, and absolute Galois group $G_K$. Let $E/K$ be an elliptic curve admitting a $K$-rational $p$-isogeny for $p \geq 17$ prime, and denote the mod-$p$ isogeny character by $\lambda : G_K \to \F_p^\times$. Assume that $p$ is unramified in $K$. Then:

\begin{enumerate}
	\item
	For a fixed prime ideal $\fp_0$ of $K$ dividing $p$ there exists a unique $\eps = (a,b) \in \left\{0,4,6,8,12\right\}^2$ such that for every principal ideal of $K$ coprime to $p$, viz.
	\[ (\alpha) = \prod_{\fq \nmid p}\fq^{\ord_\fq(\alpha)}, \]
	we have
	\[ \prod_{\fq \nmid p}\lambda^{12}(\sigma_\fq)^{\ord_\fq(\alpha)} \equiv \iota_{\fp_0}(\alpha^a(\tau(\alpha))^b) \Mod{\fp_0},\]
	where $\tau$ denotes the non-trivial Galois automorphism in $\Gal(K/\Q)$, and $\sigma_\fq$ denotes the Frobenius automorphism at $\fq$. We henceforth write $\alpha^\eps := \alpha^a(\tau(\alpha))^b$, and refer to $\eps$ as the signature of $\lambda$. We furthermore define, for $\fq$ a prime of $K$, the integers
	\begin{align*}
	A(\eps, \fq) &:= \Nm_{K/\Q}(\gamma_\fq^\eps - 1)\\
	B(\eps, \fq) &:= \Nm_{K/\Q}(\gamma_\fq^\eps - \Nm(\fq)^{12h_\fq})\\
	C_s(\eps, \fq) &:= \lcm(\left\{ \Nm_{K(\beta)/\Q}(\gamma_\fq^\eps - \beta^{12h_\fq}) \ | \ \beta \mbox{ is a supersingular Frobenius root over }\F_\fq\right\})\\
	C_o(\eps, \fq) &:= \lcm(\left\{ \Nm_{K(\beta)/\Q}(\gamma_\fq^\eps - \beta^{12h_\fq}) \ | \ \beta \mbox{ is an ordinary Frobenius root over }\F_\fq\right\})\\
	C(\eps, \fq) &:= \lcm\left(C_o(\eps, \fq), C_s(\eps, \fq)\right),
	\end{align*}
	where $h_\fq$ is the order of $\fq$ in $\Cl_K$, $\gamma_\fq$ is a generator of $\fq^{h_\fq}$, and by a (supersingular, respectively ordinary) Frobenius root over $\F_\fq$ we mean a root of the characteristic polynomial of Frobenius of a (supersingular, respectively ordinary) elliptic curve defined over $\F_\fq$.
	\item
	If $\eps \neq (0,0), (12,12)$ or $(6,6)$, then for all split primes $\fq$ of $K$ coprime to $p$,
	\begin{itemize}
		\renewcommand\labelitemi{--}
		\item
		if $E$ has potentially multiplicative reduction at $\fq$, then $p$ divides one of the non-zero integers $A(\eps, \fq)$ or $B(\eps, \fq)$;
		\item
		if $E$ has potentially good reduction at $\fq$, then $p$ divides $C(\eps, \fq)$. Moreover,
			\begin{itemize}
				\renewcommand\labelitemii{$\bullet$}
				\item
				If $K$ is real quadratic, then $C(\eps, \fq) \neq 0$.
				\item
				If $K$ is imaginary quadratic, then $C(\eps, \fq) \neq 0$ if $\fq$ is not principal.
			\end{itemize}
	\end{itemize}
	\item
	(Type 1 case) If $\eps = (0,0)$ or $(12,12)$, then $\lambda^{12}$ or $(\lambda/\chi_p^{-1})^{12}$ is unramified everywhere, and for all split primes $\fq$ of $K$ coprime to $p$,
	\begin{itemize}
		\renewcommand\labelitemi{--}
		\item
		if $E$ has potentially multiplicative reduction at $\fq$, then either $p$ divides $\Nm(\fq)^{12h_\fq} - 1$, or $\Nm(\fq) \geq 7$ and $p \leq 61$ or $p = 71$;
		\item
		if $E$ has potentially good reduction at $\fq$, then $p$ divides the non-zero integer $C((0,0), \fq)$.
	\end{itemize}
	\item
	(Type 2 case) If $\eps = (6,6)$, then $p \equiv 3 \Mod{4}$, and writing $\Gen$ for a set of split prime ideals of $K$ generating $\Cl_K$,
	\begin{itemize}
		\renewcommand\labelitemi{--}
		\item
		if there is a $\fq \in \Gen$ at which $E$ has potentially multiplicative or potentially good ordinary reduction, then either $p = \Nm(\fq)$, or $p$ divides one of the non-zero integers $A((6,6), \fq)$, $B((6,6), \fq)$ or $C_o((6,6), \fq)$;
		\item
		else $E$ has potentially good supersingular reduction at all $\fq \in \Gen$, and for all $q < p/4$ such that $q^2 + q + 1 \not\equiv 0 \Mod{p}$, if $q$ splits or ramifies in $K$, then $q$ does not split in $\Q(\sqrt{-p})$. Moreover, assuming GRH, this property of $p$ implies that
		\[ p \leq (16\log p + 16\log(12\Delta_K) + 26)^4. \]
	\end{itemize}
\end{enumerate}
\end{proposition}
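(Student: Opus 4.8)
The plan is to treat this proposition purely as a consolidation of results already established, assembling each of its four parts from the corresponding earlier statement and checking that specialising to quadratic $K$ introduces nothing new. For Part (1), existence of the signature is the quadratic specialisation of \Cref{lem:momose_lemma_1}: fixing $\fp_0 \mid p$ and writing $\eps = (a,b)$ for the pair $(a_{\id}, a_\tau)$ attached to the two primes above $p$, the displayed congruence is exactly Momose's Lemma 1 with $\alpha^\eps = \alpha^a(\tau(\alpha))^b$. That the entries lie in $\{0,4,6,8,12\}$, and are uniquely determined once $\fp_0$ is chosen, is part (2) of \Cref{prop:david_isog_char}; uniqueness of the pair then follows because the congruence over all principal ideals coprime to $p$ pins down the restriction of $\mu = \lambda^{12}$ to each inertia group. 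The definitions of $A, B, C_s, C_o, C$ are simply \Cref{def:ABC} written out for $d_K = 2$.

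For Part (2), the divisibility statements are \Cref{prop:ABC_div}: potentially multiplicative reduction forces $p \mid A(\eps,\fq)$ or $p \mid B(\eps,\fq)$, while potentially good reduction forces $p \mid C_o$ or $p \mid C_s$, hence $p \mid C(\eps,\fq)$. Non-vanishing of $C(\eps,\fq)$ is where the real/imaginary dichotomy enters, and I would deduce it from \Cref{cor:quadratic_algo} (itself a corollary of \Cref{prop:david_prop_215}): for a signature not of Type 1 or 2 and a split $\fq$, the only way $C$ could vanish is via $C_o = 0$, which by the Table forces Type 3 and hence that $K$ contains an imaginary quadratic field $L$ with $\Nm_{K/L}(\fq)$ principal. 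When $K$ is real quadratic no such $L$ exists, so $C \neq 0$ unconditionally; when $K$ is imaginary quadratic one has $L = K$ and the offending condition is exactly that $\fq$ be principal, so $C \neq 0$ once $\fq$ is taken non-principal.

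Parts (3) and (4) are the two constant-signature cases, and I would quote the section results directly. Part (3) reproduces \Cref{prop:type_1_primes} and the structure of its proof: reduction at a split $\fq$ over $q > 5$ is either potentially good, giving $p \mid C((0,0),\fq)$ with $C((0,0),\fq) \neq 0$, or potentially multiplicative; in the latter case, after possibly passing to the Galois conjugate curve, one either lands on the zero cusp, giving the divisibility $p \mid \Nm(\fq)^{12h_\fq} - 1$ (using $\Nm(\fq) = q$ for split $\fq$), or the pair $(x,x^\tau)$ crosses $(\infty,\infty)$, whereupon Kamienny's formal immersion argument with $q \geq 7$ yields $p \leq 61$ or $p = 71$. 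Part (4) assembles the Type 2 analysis: the congruence $p \equiv 3 \Mod{4}$ comes from part (5) of \Cref{prop:david_isog_char} since $(6,6)$ is quartic; the first bullet is \Cref{cor:type_2_not_momose}, whose integer $ABC_o(\Gen)$ is non-zero and divisible by $p$ whenever some $\fq \in \Gen$ fails to have potentially good supersingular reduction; and in the remaining case \Cref{prop:type_2_to_momose_type_2} upgrades $\lambda$ to Momose Type 2, so that Condition CC holds and, under GRH, \Cref{prop:type_2_bound} supplies the stated logarithmic bound.

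The work here is bookkeeping rather than a genuine obstacle, and the only real subtleties lie in Part (4): one must keep the distinction between signature Type 2 and Momose Type 2 straight, so that the two bullets partition exactly according to whether supersingular reduction holds across the chosen generating set $\Gen$, and one must recall that the finiteness furnished by Condition CC is rendered effective only through the GRH-dependent \Cref{prop:type_2_bound}. Everything else is a faithful transcription of the cited results to the case $[K:\Q] = 2$.
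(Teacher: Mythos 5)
Your proposal is correct and matches the paper's own proof, which is likewise a brief consolidation citing \Cref{prop:david_isog_char} and \Cref{lem:momose_lemma_1} for (1), \Cref{prop:ABC_div} with \Cref{prop:david_prop_215} and \Cref{cor:momose_weak_sharp} for (2), \Cref{prop:david_prop_215} and the proof of \Cref{prop:type_1_primes} for (3), and \Cref{prop:david_isog_char}(5), \Cref{cor:type_2_not_momose}, Condition CC and \Cref{prop:type_2_bound} for (4). The extra detail you supply (the real/imaginary dichotomy for the non-vanishing of $C$ via the Type 3 row of the table, and the role of \Cref{prop:type_2_to_momose_type_2} in the second bullet of (4)) is faithful to the cited results and consistent with the paper.
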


\begin{proof}
We collect the previous results required to prove this Proposition. Item (1) is \Cref{prop:david_isog_char} and \Cref{lem:momose_lemma_1}. The first part of item (2) is \Cref{prop:ABC_div} and \Cref{prop:david_prop_215}, with the second part being \Cref{prop:ABC_div} and \Cref{cor:momose_weak_sharp}. Item (3) is \Cref{prop:david_prop_215} and the proof of \Cref{prop:type_1_primes}. For item (4), that $p \equiv 3 \mod{4}$ comes from item (5) of \Cref{prop:david_isog_char}; the first part of item (4) is then \Cref{cor:type_2_not_momose}, with the second part being \hyperlink{conditionCC}{Condition CC} and \Cref{prop:type_2_bound}.
\end{proof}

From this result one obtains the algorithm which our code implements as follows; this may be considered the full and explicit version of \Cref{alg:main} from the Introduction.

\begin{theorem}\label{thm:main_full}
Let $K$ be a quadratic field which is not imaginary quadratic of class number one. Make the following definitions:
{\small
\begin{align*}
\Delta_K, h_K, \Cl_K  &= \mbox{discriminant, class number and class group of } K\\
\Aux &= \mbox{\parbox{14cm}{finite set of split prime ideals of $K$ which, if $K$ is\\imaginary, are further required to be non-principal}}\\[1ex]
\AuxGen &= \mbox{\parbox{14cm}{finite set of ideal generating sets for $\Cl_K$, all ideals\\ being required to split in $K$}}\\[1ex]
S &= \left\{(0,12), (0,4), (0,8), (4,4), (4,8), (4,12), (4,6), (0,6)\right\}\\[1ex]
\eps_6 &= (6,6)\\[1ex]
ABC(\eps, \fq) &= \mbox{\parbox{14cm}{$\lcm(A(\eps, \fq), B(\eps, \fq), C(\eps, \fq), \Nm(\fq))$, for the integers\\$A$, $B$, $C$ defined in \Cref{prop:main_detailed}}}\\[1ex]
ABC_o(\Gen) &= \mbox{\parbox{14cm}{$\underset{\fq \in \Gen}\lcm\left(A(\eps_6, \fq), B(\eps_6, \fq), C_o(\eps_6, \fq), \Nm(\fq)\right)$, for the\\integer $C_o$ defined in \Cref{prop:main_detailed}}}\\[1ex]
\NotTypeOneTwoPrimes(K) &= \bigcup_{\eps \in S} \ \Supp \left( \gcd_{\fq \in \Aux}(ABC(\eps, \fq))\right)\\[1ex]
\Aux_{\geq 7} &= \mbox{\parbox{14cm}{finite set of split prime ideals of $K$ of characteristic $\geq 7$}}\\[1ex]
h_\fq &= \mbox{order of } \fq \mbox{ in }\Cl_K\mbox{, for $\fq$ a prime ideal of $K$}\\
D(\fq) &= \lcm\left(\Nm(\fq), \Nm(\fq)^{12h_\fq} - 1, C((0,0), \fq)\right)\\
\TypeOnePrimes(K) &= 
	\PrimesUpTo(61) \cup \left\{71 \right\} \cup \Supp\left( \gcd_{\fq \in \Aux_{\geq 7}} (D(\fq))\right)\\
\TypeTwoNotMomosePrimes(K) &= \Supp \left(\gcd_{\Gen \in \AuxGen}ABC_o(\Gen)\right)\\[2ex]
\mbox{ConditionCC}(K,p) &= \mbox{\parbox{14cm}{\textup{True} if for all $q < p/4$ such that $q^2 + q + 1 \not\equiv 0 \Mod{p}$ and \\$q$ splits or ramifies in $K$, it follows that $q$ does not split in\\$\Q(\sqrt{-p})$; \textup{False} otherwise}}\\[1ex]
\TypeTwoMomosePrimes(K) &= \left\{p : \mbox{ConditionCC}(K,p) \mbox{ is True} \right\}\\[1ex]
\TypeTwoPrimes(K) &= \TypeTwoNotMomosePrimes(K) \cup \TypeTwoMomosePrimes(K).
\end{align*}
}
Then
\begin{equation}\label{eqn:all_supersets}
\begin{aligned}
    \IsogPrimeDeg(K) \subseteq  \NotTypeOneTwoPrimes(K) &\cup \TypeOnePrimes(K) \\
        \cup \ \TypeTwoPrimes(K) &\cup \Supp(\Delta_K).
\end{aligned}
\end{equation}
Further, if $p \in \TypeTwoMomosePrimes(K)$, then assuming GRH, 
\[ p \leq (16\log p + 16\log(12\Delta_K) + 26)^4;\]
in particular $\TypeTwoMomosePrimes(K)$ and hence the right hand side of \Cref{eqn:all_supersets} is finite and explicitly computable.
\end{theorem}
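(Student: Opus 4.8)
The plan is to assemble \Cref{prop:main_detailed} --- which already carries out the case analysis on the isogeny signature --- into a single superset statement valid for every isogeny prime, and then to verify that each of the four constituent sets is finite and effectively computable.

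First I would fix an arbitrary $p \in \IsogPrimeDeg(K)$, witnessed by an elliptic curve $E/K$ carrying a $K$-rational $p$-isogeny with character $\lambda$. Two cases are disposed of immediately: if $p$ ramifies in $K$ then $p \mid \Delta_K$, so $p \in \Supp(\Delta_K)$; and every $p < 17$ lies in $\PrimesUpTo(61) \subseteq \TypeOnePrimes(K)$. For the remaining $p \ge 17$ unramified in $K$, \Cref{prop:main_detailed}(1) attaches to $\lambda$ a well-defined signature $\eps \in \{0,4,6,8,12\}^2$, and by the reductions of \Cref{sec:overview} --- replacing $E$ by its Galois conjugate and/or the $p$-isogeny by its dual, each of which is again a $K$-rational $p$-isogeny realising the same prime $p$ --- I may take $\eps$ to be one of the ten reduced representatives, namely $(0,0)$, $(6,6)$, or one of the eight elements of $S$. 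That this loses nothing is also reflected in the multiplicative nets themselves: $A,B,C,C_o,C_s$ are invariant under the swap $(a,b)\mapsto(b,a)$, since $\gamma_\fq^{(b,a)}=\tau(\gamma_\fq^{(a,b)})$ and $\Nm_{K/\Q}$ is $\tau$-invariant, while dualising $\eps\mapsto 12-\eps$ merely interchanges $A$ and $B$ up to the factor $\Nm(\fq)$ already absorbed into the $\lcm$.

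The case analysis then proceeds. If $\eps\in S$, \Cref{prop:main_detailed}(2), resting on \Cref{cor:quadratic_algo}, shows $ABC(\eps,\fq)$ is nonzero and that $p\mid ABC(\eps,\fq)$ for every $\fq\in\Aux$; taking the $\gcd$ over $\Aux$ and the union over $S$ places $p\in\NotTypeOneTwoPrimes(K)$. If $\eps=(0,0)$, then by \Cref{prop:main_detailed}(3) and \Cref{prop:type_1_primes} either $p\mid D(\fq)$ for every $\fq\in\Aux_{\ge 7}$, or $p\le 61$ or $p=71$; in all subcases $p\in\TypeOnePrimes(K)$. If $\eps=(6,6)$, I split on whether $\lambda$ is of Momose Type 2: if not, \Cref{cor:type_2_not_momose} gives $p\mid ABC_o(\Gen)$ for every generating set, so $p\in\TypeTwoNotMomosePrimes(K)$; if so, \Cref{prop:main_detailed}(4) shows $p$ satisfies \hyperlink{conditionCC}{Condition CC}, so $p\in\TypeTwoMomosePrimes(K)$. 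Either way $p\in\TypeTwoPrimes(K)$, which establishes \Cref{eqn:all_supersets}. For finiteness, I would observe that $\NotTypeOneTwoPrimes(K)$, $\TypeOnePrimes(K)$ and $\TypeTwoNotMomosePrimes(K)$ are each the support of a $\gcd$ of a family of \emph{nonzero} integers (nonvanishing supplied by \Cref{cor:quadratic_algo}, \Cref{prop:type_1_primes} and \Cref{cor:type_2_not_momose} respectively), possibly unioned with the finite set $\PrimesUpTo(61)\cup\{71\}$, and hence finite and computable from $K$. The one set whose finiteness is not visible from its definition is $\TypeTwoMomosePrimes(K)$; here \Cref{prop:type_2_bound} (and with it GRH) supplies $p\le(16\log p+16\log(12\Delta_K)+26)^4$. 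Since the right-hand side grows only poly-logarithmically in $p$ while the left grows linearly, this implicit inequality fails for all $p$ beyond an explicit numerical cut-off $B_K$, so $\TypeTwoMomosePrimes(K)$ is computed by testing Condition CC for each $p\le B_K$.

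The main obstacle here is organisational rather than conceptual: the statement is essentially a corollary of the preceding propositions, so the risk lies in the bookkeeping --- ensuring that no signature, reduction, or reduction type is silently omitted, and in particular that the eight representatives of $S$ together with $(0,0)$ and $(6,6)$ genuinely exhaust all twenty-five a priori signatures after passing to duals and conjugates. The sole place where new mathematical content (as opposed to collation) is imported is \Cref{prop:type_2_bound}: the GRH-conditional bound on Momose Type 2 primes, together with the routine but essential step of solving its implicit inequality to extract a concrete $B_K$, is exactly what makes the right-hand side of \Cref{eqn:all_supersets} provably finite and the overall algorithm terminate.
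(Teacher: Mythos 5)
Your proposal is correct and follows essentially the same route as the paper, which presents \Cref{thm:main_full} as a direct assembly of \Cref{prop:main_detailed} (itself a collation of \Cref{prop:david_isog_char}, \Cref{lem:momose_lemma_1}, \Cref{prop:ABC_div}, \Cref{prop:david_prop_215}, \Cref{cor:momose_weak_sharp}, \Cref{cor:type_2_not_momose}, \Cref{prop:type_1_primes}, Condition CC and \Cref{prop:type_2_bound}); your explicit handling of the ramified and $p<17$ cases, the reduction to the ten signature representatives via conjugation and dualising, and the extraction of the cut-off $B_K$ from the implicit Type 2 inequality are exactly the bookkeeping the paper leaves to the reader.
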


Note that this strategy of computing a superset for $\IsogPrimeDeg(K)$ is used by Mazur in \cite{mazur1978rational}; compare, for example, \Cref{eqn:all_supersets} above with the very last sentence of his paper, just before Goldfeld's Appendix: ``it is clear that Proposition 8.1 is proved where $\mathcal{N}(K)$ is the finite set of primes $\mathcal{N}_1(K) \cup \mathcal{N}_2(K) \cup \mathcal{N}_3(K) \cup \mathcal{N}_4(K)$''.

\begin{remark}
In order to have some testing of the code, we require examples of isogeny primes larger than $71$ over specific quadratic fields. From the survey article \cite{gonzalez2004arithmetic}, together with one example from Box's Section 4.7 \cite{box2021quadratic}, we have the six examples shown in \Cref{tab:tests}.

\begin{table}[htp]
\begin{center}
\begin{tabular}{|c|c|c|}
\hline
Prime & Is Isogeny Prime over & Found by\\
\hline
$73$ & $\Q(\sqrt{-127})$ & Galbraith \cite{galbraith1999}\\
$73$ & $\Q(\sqrt{-31})$ & Box \cite{box2021quadratic}\\
$103$ & $\Q(\sqrt{5 \cdot 577})$ & Galbraith (\emph{loc. cit.})\\
$137$ & $\Q(\sqrt{-31159})$ & Galbraith (\emph{loc. cit.})\\
$191$ & $\Q(\sqrt{61 \cdot 229 \cdot 145757})$ & Elkies \cite{elkies1998}\\
$311$ & $\Q(\sqrt{11 \cdot 17 \cdot 9011 \cdot 23629})$ & Galbraith (\emph{loc. cit.})\\
\hline
\end{tabular}
\vspace{0.3cm}
\caption{\label{tab:tests}Instances of large isogeny primes in the literature which are rational over quadratic fields not imaginary quadratic of class number one.}
\end{center}
\end{table}

These examples serve as the basis of a unit testing framework, which has been implemented in \verb|test_quadratic_isogeny_primes.py|. For each of the above five quadratic fields, we check that the prime in the left column, as well as all primes in $\IsogPrimeDeg(\Q)$, are in output of the program. Moreover, the inclusion $\IsogPrimeDeg(\Q) \subseteq \IsogPrimeDeg(K)$ is checked for each quadratic field $K = \Q(\sqrt{D})$ for $|D| \leq 100$. Running all of these tests takes about three minutes on an old laptop.
\end{remark}

We end this section by stating the David-Larson-Momose-Vaintrob bound for a quadratic field $K$.

Recall from the discussion at the end of \Cref{sec:prelims} that David gave the bound $C(K,2(\Delta_K)^{Ah_K})$ for the primes $p$ which are not of Momose Type 1, 2 or 3. However, since we are in any case assuming GRH, we may replace the $2(\Delta_K)^{Ah_K}$ with $(4\log|\Delta_K|^{h_K} + 5h_K + 5)^2$, using Theorem 5.1 of \cite{bach1996explicit} (applied with $E = H_K$, the Hilbert class field of $K$).

As explained in \Cref{sec:type_1_primes}, $(1 + 3^{12h_K})^2$ gives a bound on the Momose Type 1 primes. A bound on the Momose Type 2 primes is given by the largest root $T_K$ of the equation
\[ f(x) = x - (16\log(x) + 16\log(12\Delta_K) + 26)^4.\]
Elementary calculus shows that $f$ is negative and decreasing at $x = 1$, admits a unique turning point at some $x > 1$, after which $f$ is strictly increasing; therefore $T_K$ is the unique root of $f$ in the range $(1,+\infty)$.
Finally our assumptions allow us to ignore Momose Type 3 primes. We thus make the following definition.

\begin{definition}
Let $K$ be a quadratic field which is not imaginary quadratic of class number one. Write $h_K$ for the class number of $K$ and $\Delta_K$ for the discriminant. We then define the \textbf{David-Larson-Momose-Vaintrob bound} for $K$ to be
\[ \DLMV(K) := \max((1 + 3^{12h_K})^2, T_K, C(K,(4\log|\Delta_K|^{h_K} + 5h_K + 5)^2)),\]
where the function $C(K,n)$ is given in \Cref{eqn:david_constant}.
\end{definition}

This gives us the following result.

\begin{proposition}
Let $K$ be a quadratic field which is not imaginary quadratic of class number one. Then, assuming GRH, $\DLMV(K)$ is a bound for the isogeny primes for $K$. That is, if there exists an elliptic curve over $K$ admitting a $K$-rational $p$-isogeny for $p$ prime, then $p \leq \DLMV(K)$.
\end{proposition}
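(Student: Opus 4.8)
The plan is to run the trichotomy supplied by Momose's Isogeny Theorem (\Cref{thm:momose_isogeny}) and to bound each resulting possibility by one of the three quantities appearing in the definition of $\DLMV(K)$. So let $p$ be an isogeny prime for $K$, witnessed by an elliptic curve $E/K$ carrying a $K$-rational $p$-isogeny with isogeny character $\lambda$. By \Cref{thm:momose_isogeny} there is a constant $C_K$ such that \emph{either} $p \leq C_K$, \emph{or} $\lambda$ is of Momose Type $1$, $2$, or $3$; I would handle these four alternatives in turn and then take the maximum of the bounds obtained.

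First I would dispose of Momose Type $3$, which is precisely where the hypothesis on $K$ enters. By definition this type requires $K$ to contain an imaginary quadratic field $L$ together with its Hilbert class field $H_L$. Since $[K:\Q]=2$, containment of $H_L$ forces $2 = [K:\Q] \geq [H_L:\Q] = 2h_L$, hence $h_L = 1$ and $K = L = H_L$; that is, $K$ is imaginary quadratic of class number one, contrary to assumption. Thus no Momose Type $3$ prime can occur, and this alternative is vacuous.

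For the Momose Type $1$ alternative I would invoke the Oesterl\'{e}-bound argument recalled in \Cref{sec:type_1_primes}: after possibly replacing $E$ by an isogenous curve one arranges $\lambda^{12}$ to be everywhere unramified, so the kernel field of $\lambda$ has degree at most $12 d_K h_K = 24 h_K$ over $\Q$ and carries a rational $p$-torsion point, whence $p \leq (1+3^{6 d_K h_K})^2 = (1+3^{12h_K})^2$, the first entry in the maximum (and this step needs no GRH). For the Momose Type $2$ alternative I would apply \Cref{prop:type_2_bound}, where GRH enters, giving $p \leq (16\log p + 16\log(12\Delta_K)+26)^4$; writing $f(x) = x - (16\log x + 16\log(12\Delta_K)+26)^4$ this reads $f(p) \leq 0$, and since (as in the definition of $T_K$) $f$ is negative at $x=1$, has a single turning point, and thereafter increases strictly to $+\infty$ with unique root $T_K \in (1,+\infty)$, the inequality $f(p)\leq 0$ forces $p \leq T_K$, the second entry.

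It remains to bound $C_K$ in the alternative $p \leq C_K$. Here I would start from David's explicit value $C_K = C(K, 2(\Delta_K)^{Ah_K})$ recorded at the end of \Cref{sec:prelims} and, since GRH is assumed throughout, replace the class-group generating set $\mathcal{J}_K$ by a smaller one coming from the GRH-conditional Effective Chebotarev Density Theorem, namely Theorem 5.1 of \cite{bach1996explicit} applied to the Hilbert class field $H_K$; this replaces the norm bound $2(\Delta_K)^{Ah_K}$ on the generators by $(4\log|\Delta_K|^{h_K}+5h_K+5)^2$, yielding $p \leq C(K,(4\log|\Delta_K|^{h_K}+5h_K+5)^2)$ with $C(K,n)$ as in \Cref{eqn:david_constant}, the third entry. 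Combining all four cases, $p$ is bounded above by one of the three defining quantities, hence by their maximum $\DLMV(K)$. The main obstacle is not any individual case but the bookkeeping of this last step: one must verify that the GRH substitution into David's formula is legitimate (that $(4\log|\Delta_K|^{h_K}+5h_K+5)^2$ really is a valid norm bound for a generating set of $\Cl_K$ under GRH) and that it propagates correctly through the definition of $C(K,n)$. The genuinely exceptional primes $p < 17$ and the ramified primes $p \mid \Delta_K$, which lie outside the scope of the character analysis, then cause no difficulty, as they are absorbed into the alternative $p \leq C_K$ and are dwarfed by $C_K$.
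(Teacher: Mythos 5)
Your argument is correct and follows exactly the route the paper takes: the paper states this proposition without a formal proof because the preceding discussion in \Cref{sec:parts_together} \emph{is} the proof, namely Momose's trichotomy with Type 3 excluded by the hypothesis on $K$, the Oesterl\'{e} bound $(1+3^{12h_K})^2$ for Momose Type 1, the root $T_K$ for Momose Type 2 via \Cref{prop:type_2_bound}, and David's constant with the Bach--Sorenson (GRH) replacement of $2(\Delta_K)^{Ah_K}$ for the remaining primes. Your explicit verification that containment of $H_L$ in a quadratic $K$ forces $K=L=H_L$ with $h_L=1$ is exactly the reason the paper can ``ignore Momose Type 3 primes,'' and the small and ramified primes are indeed harmlessly absorbed as you say.
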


\section{Weeding out the Pretenders}\label{sec:weeding}

Having now computed a superset for $\IsogPrimeDeg(K)$, one is reduced to an elaborate game of number-theoretic \emph{Whac-a-mole}. This section relies heavily on three different papers (\cite{bruin2015hyperelliptic}, \cite{ozman2019quadratic}, \cite{box2021quadratic}) on the subject of quadratic points on low genus modular curves; as such we begin by stating their combined result.

To do this, we define the dichotomy of \textbf{exceptional} and \textbf{non-exceptional} quadratic points on a general smooth curve $\mathcal{C}$ of genus at least $2$ defined over $\Q$. If $\mathcal{C}$ admits a degree $2$ map defined over $\Q$ to either $\PP^1$ (i.e. $\mathcal{C}$ is hyperelliptic) or to a rational elliptic curve $E$ of positive rank (in particular, $\mathcal{C}$ is bielliptic), then pulling back the rational points along the map will result in infinitely many quadratic points on $\mathcal{C}$. Since this is a clear source of infinitely many quadratic points, we call these non-exceptional. It is a theorem initially of Harris and Silverman (Corollary 3 in \cite{harris1991bielliptic}), often attributed to Abramovich and Harris (who actually extended this to higher degree points in Theorem 1 in \cite{abramovich1991abelian}) that these are the only two potential sources of infinitely many quadratic points. In particular, there are only finitely many quadratic points on $\mathcal{C}$ which \emph{do not} arise from pulling back rational points on $\PP^1$ or $E$ as above; we call these quadratic points \textbf{exceptional}.

We specialise now to $\mathcal{C} = X_0(N)$ being a modular curve. Ogg \cite{ogg1974hyperelliptic} determined the values of $N$ ($19$ of them) for which $X_0(N)$ is hyperelliptic, and Bars \cite{bars1999bielliptic} determined the values of $N$ ($10$ of them) for which $X_0(N)$ is bielliptic of positive rank (the infamous $N = 37$ falling into both categories). There are therefore $28$ values of $N$ for which $X_0(N)$ admits non-exceptional quadratic points, written explicitly in the literature as Theorem 4.3 in \cite{bars1999bielliptic}: $22$, $23$, $26$, $28$, $29$, $30$, $31$, $33$, $35$, $37$, $39$, $40$, $41$, $43$, $46$, $47$, $48$, $50$, $53$, $59$, $61$, $65$, $71$, $79$, $83$, $89$, $101$, $131$.

We may then summarise the main results of (\cite{bruin2015hyperelliptic}, \cite{ozman2019quadratic}, \cite{box2021quadratic}) as follows.

\begin{theorem}[Bruin-Najman, \"{O}zman-Siksek, Box]
Let $X_0(N)$ be a modular curve of genus $2$, $3$, $4$ or $5$, but excluding $N = 37$.
\begin{enumerate}
	\item
	If $X_0(N)$ admits non-exceptional quadratic points, then these points correspond (in the moduli interpretation) to elliptic $\Q$-curves.
	\item
	The finitely many exceptional quadratic points on $X_0(N)$ are listed in the tables \cite{bruin2015hyperelliptic}, \cite{ozman2019quadratic}, \cite{box2021quadratic}.
\end{enumerate}
For $N = 37$, the description of quadratic points is given in Section 5 of \cite{box2021quadratic}.
\end{theorem}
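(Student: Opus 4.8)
The plan is to treat this statement as a compilation and cross-check of the three cited papers rather than as a fresh argument, since every value of $N$ with $X_0(N)$ of genus between $2$ and $5$ is handled in exactly one of \cite{bruin2015hyperelliptic}, \cite{ozman2019quadratic}, \cite{box2021quadratic}. First I would enumerate all $N$ for which $X_0(N)$ has genus $2$, $3$, $4$ or $5$, and partition this finite list according to which work determines its quadratic points: the hyperelliptic values of $N$ are handled by Bruin and Najman, while the remaining low-genus curves are split between \"{O}zman--Siksek and Box, the latter being the most recent and comprehensive via a relative symmetric Chabauty method. The first bookkeeping task is to verify that this partition is exhaustive, so that no genus-$2$-through-$5$ value falls through the cracks; this is the routine but error-prone case-check that makes up the first half of the work.

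For part (1), I would argue as follows. By Ogg \cite{ogg1974hyperelliptic} and Bars \cite{bars1999bielliptic}, for $N \neq 37$ every degree-$2$ map $X_0(N) \to \PP^1$ realising the hyperelliptic structure, and every degree-$2$ map $X_0(N) \to E$ realising a bielliptic structure of positive rank, is induced by an Atkin--Lehner involution $w_d$. A non-exceptional quadratic point $x$ is by definition the pullback of a rational point along such a map, so its fibre is Galois-stable and its conjugate satisfies $x^\sigma = w_d(x)$. Since $w_d$ corresponds on the moduli side to a $d$-isogeny, the elliptic curve $E_x$ attached to $x$ is $d$-isogenous to the curve $E_{x^\sigma} = E_x^\sigma$ attached to $x^\sigma$; hence $E_x$ is isogenous to its Galois conjugate, i.e.\ an elliptic $\Q$-curve. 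This is precisely the point at which $N = 37$ must be excluded: its hyperelliptic involution is \emph{not} of Atkin--Lehner type, so the argument above breaks down, and one instead appeals directly to the separate analysis in Section 5 of \cite{box2021quadratic}.

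For part (2) the content is simply that each of the three papers produces an explicit finite list of exceptional quadratic points for the values of $N$ it treats, obtained by Chabauty-type and Mordell--Weil sieve computations; I would cite the relevant tables and note that their union gives the complete list for all $N$ of genus $2$ through $5$ (excluding $37$), with the finiteness itself guaranteed by Harris--Silverman \cite{harris1991bielliptic} and Abramovich--Harris \cite{abramovich1991abelian}.

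The hard part will be the consistency and completeness verification rather than any single deep step: ensuring that the notion of ``exceptional'' used in each source agrees with the uniform definition adopted here, that the partition of $N$ by paper is genuinely complete, and that the $\Q$-curve assertion of part (1) is correctly matched against what each paper actually proves. The only real mathematical subtlety is concentrated in the $N = 37$ exclusion, where the failure of the hyperelliptic involution to be Atkin--Lehner means the clean dichotomy collapses and a bespoke argument from \cite{box2021quadratic} is required.
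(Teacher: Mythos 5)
The paper offers no proof of this theorem: it is stated purely as a summary of the main results of \cite{bruin2015hyperelliptic}, \cite{ozman2019quadratic} and \cite{box2021quadratic}, so your framing of the task as a compilation and completeness check of those three papers is exactly what the paper implicitly does, and your partition of the genus-$2$-to-$5$ levels (hyperelliptic $N$ to Bruin--Najman, the remaining non-hyperelliptic curves split between \"{O}zman--Siksek and Box according to whether the Mordell--Weil rank of the Jacobian is zero or positive) is correct and exhaustive.

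The one genuine inaccuracy is your appeal to Ogg in part (1): it is \emph{not} true that for every hyperelliptic $X_0(N)$ with $N \neq 37$ the hyperelliptic involution is an Atkin--Lehner involution $w_d$. Ogg's theorem \cite{ogg1974hyperelliptic} has three exceptions, $N = 37$, $40$ and $48$; for $N = 40$ and $48$ the hyperelliptic involution is an ``exotic'' involution (these levels are divisible by $4$, so the normaliser of $\Gamma_0(N)$ in $\mathrm{PSL}_2(\RR)$ is strictly larger than the Atkin--Lehner group). As literally written, your argument therefore leaves a gap at $N = 40$ and $48$. The conclusion survives there because those involutions are still \emph{modular}: they are induced by elements of $\mathrm{GL}_2^+(\Q)$ normalising $\Gamma_0(N)$, and any such element carries the lattice attached to a point of the upper half-plane to a commensurable lattice, so the Galois conjugate $x^\sigma = \iota(x)$ still parametrises an elliptic curve isogenous to $E_x$, i.e.\ a $\Q$-curve. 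The genuinely different case is $N = 37$, whose hyperelliptic involution is not modular at all (it does not lie in the normaliser), which is why no moduli interpretation of the corresponding non-exceptional points is available and the bespoke analysis in Section 5 of \cite{box2021quadratic} is required; your diagnosis of that exclusion is otherwise exactly right, as is the rest of the proposal (the Galois-stability of the fibre, and the citation of the tables for part (2) with finiteness guaranteed by \cite{harris1991bielliptic} and \cite{abramovich1991abelian}).
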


This combined result reduces our task to determining whether or not certain modular curves $X_0(N)$ admit non-exceptional $K$-points, for a given quadratic field $K = \Q(\sqrt{d})$. Since these correspond to elliptic $\Q$-curves of degree $p$ defined over $K$, they yield $\Q$-points on the quadratic twist $X^d(N)$ of $X_0(N)$ by the quadratic field $K$. For more details on this process see the introduction to \"{O}zman's paper \cite{ozman2012points} or Ellenberg's survey article \cite{ellenberg2004qcurves}. This reduces the problem to determining whether or not the $d$-twisted modular curve $X^d(N)$ admits a $\Q$-rational point.

If the Mordell-Weil rank of the Jacobian of this curve is less than the genus, then one can in principle apply Chabauty's method to determine the $\Q$-points, and in particular to determine whether or not they exist. If the rank of the Jacobian is zero, then this process can be automated in Magma, using the function \path{Chabauty0} due to Maarten Derickx and Solomon Vishkautsan. This happens in a few of the cases to be resolved later, so we give this method the name of \textbf{Twist-Chabauty0}.

Another way to rule out $\Q$-points on $X^d(N)$ is to show that there is a prime $p$ for which $X^d(N)(\Q_p)$ is empty; that is, to exhibit a \textbf{local obstruction} at a given place. This is the motivating question of \"{O}zman's paper \cite{ozman2012points}. We collect the results from this paper which we use in the sequel.

We thus adopt the notation of Section~4 of \emph{loc. cit.}. Let $\mathbb{K}$ denote the quadratic field $\Q(\sqrt{d})$, $N$ a square-free integer, and $p$ a prime that ramifies in $\mathbb{K}$ but not in $\Q(\sqrt{-N})$. Let $\nu$ be the prime of $\mathbb{K}$ lying over $p$, and $R$ the ring of integers of the completion $\mathbb{K}_\nu$. Denote by $\mathcal{X}_0(N)$ the Deligne-Rapoport model of $X_0(N)$, which is smooth and regular over $\ZZ[1/N]$. Letting $w_N$ be the Atkin-Lehner involution, \"{O}zman defines the set $S_N$ to be the primes $p$ for which there is a $w_N$-fixed, $\F_p$-rational point on the special fiber of $\mathcal{X}_0(N)_{/R}$, and relates this to the local solubility of $X^d(N)$ as follows.

\begin{theorem}[\"{O}zman, Theorem 4.10 in \cite{ozman2012points}]\label{thm:ozman_sieve}
Let $p$ be a prime ramified in $\Q(\sqrt{d})$ and $N$ a square-free integer such that $p$ is unramified in $\Q(\sqrt{-N})$. Then $X^d(N)(\Q_p) \neq \emptyset$ if and only if $p$ is in the set $S_N$.
\end{theorem}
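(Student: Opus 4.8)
The plan is to translate the existence of a $\Q_p$-point on the twist into a statement about $w_N$-twisted points on $X_0(N)$ itself, and then to read off the twist condition on the special fibre of the smooth model. By construction of the quadratic twist, $X^d(N)$ becomes isomorphic to $X_0(N)$ over $\mathbb{K}$ with the descent datum for $\Gal(\mathbb{K}/\Q)$ twisted by $w_N$; concretely, $X^d(N)(\Q_p)$ is in bijection with the set of points $P \in X_0(N)(\mathbb{K}_\nu)$ satisfying $\sigma(P) = w_N(P)$, where $\sigma$ generates $\Gal(\mathbb{K}_\nu/\Q_p)$ (a field, since $p$ ramifies in $\mathbb{K}$). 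Since $X_0(N)$ is projective and $\mathcal{X}_0(N)_{/R}$ is smooth (here one uses $p \nmid N$, so that the Deligne--Rapoport model is smooth over $R$), the valuative criterion of properness identifies each such $P$ with an $R$-point of $\mathcal{X}_0(N)$; and because both $w_N$ and $\sigma$ act on the integral model over $R$, the twist condition makes sense integrally.

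For the forward implication, I would take a twisted $R$-point $P$ and reduce it modulo the maximal ideal $\mathfrak{m}_R$. Because $\mathbb{K}_\nu/\Q_p$ is totally ramified, its residue field is $\F_p$ and $\sigma$ induces the identity on it; hence the reduction $P_0 \in X_0(N)(\F_p)$ of $P$ agrees with the reduction of $\sigma(P)$, while $w_N(P)$ reduces to $w_N(P_0)$. The twist condition $\sigma(P) = w_N(P)$ therefore descends to $P_0 = w_N(P_0)$, exhibiting a $w_N$-fixed $\F_p$-rational point on the special fibre, i.e. $p \in S_N$.

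The substance lies in the converse. Given a $w_N$-fixed point $P_0 \in X_0(N)(\F_p)$, I would lift it to a twisted $R$-point. Smoothness provides a formal parameter $t$ at $P_0$ over $R$, identifying the $R$-points reducing to $P_0$ with $\mathfrak{m}_R$ via $P \mapsto t(P)$. Choosing $t$ defined over $\ZZ_p$, the Galois action becomes $a \mapsto \sigma(a)$; and since $w_N$ is an involution fixing $P_0$, after normalising the coordinate (averaging $t$ against $w_N^{*} t$) one arranges $w_N^{*} t = -t$, so that $w_N$ acts by $a \mapsto -a$. The composite involution $\iota := w_N \circ \sigma$ then acts by $a \mapsto -\sigma(a)$, and a twisted point is exactly a fixed point $\sigma(a) = -a$ with $a \in \mathfrak{m}_R$. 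Taking $a = \pi$ for a uniformiser $\pi$ of $R$ with $\sigma(\pi) = -\pi$ (for instance $\pi = \sqrt{up}$ in the ramified quadratic field $\Q_p(\sqrt{up})$) produces such a fixed point, whose reduction is $P_0$; unwinding the identifications yields the desired $\Q_p$-point on $X^d(N)$.

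The main obstacle is precisely this converse lifting: producing a coordinate in which the Galois and Atkin--Lehner involutions are simultaneously linearised, and then solving $\sigma(a) = -a$ in $\mathfrak{m}_R$. The normalisation $w_N^{*} t = -t$ relies on dividing by $2$, so the residue characteristic $p = 2$ demands separate treatment of the ramified quadratic extensions of $\Q_2$ and of the local action of $w_N$ near its fixed points; this is the one place where the clean coordinate argument breaks down and a more hands-on analysis is required.
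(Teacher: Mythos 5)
This statement is imported verbatim from \"{O}zman's paper (Theorem 4.10 of \cite{ozman2012points}); the present paper gives no proof of it, so there is no internal argument to compare against. Judged on its own terms, your proposal reconstructs what is essentially the standard (and, in substance, \"{O}zman's) argument: identify $X^d(N)(\Q_p)$ with the $\sigma$-$w_N$-twisted points of $X_0(N)(\mathbb{K}_\nu)$, extend to $R$-points of the smooth proper model (the hypothesis that $p$ is unramified in $\Q(\sqrt{-N})$ forces $p\nmid N$, so the Deligne--Rapoport model is indeed smooth), reduce to get the forward implication, and for the converse linearise $w_N$ and $\sigma$ simultaneously on the formal neighbourhood of a $w_N$-fixed $\F_p$-point and solve $\sigma(a)=-a$ with a uniformiser. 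For odd $p$ this is correct: the only point you leave implicit is why $w_N$ acts by $-1$ on the cotangent space at a fixed point of the special fibre (needed both for $t'=(t-w_N^*t)/2$ to remain a parameter, since $1-a_1\equiv 2\pmod p$, and for the fixed locus to be isolated); this follows from $w_N$ being a nontrivial involution of the geometrically irreducible special fibre in odd characteristic, and is worth a sentence.

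The one genuine gap is the case $p=2$, which you flag but do not close. It is not a removable hypothesis: the theorem as stated allows $p=2$ (any $d\equiv 2,3\pmod 4$ ramifies $2$), and the paper actually invokes the sieve at $q=2$ (e.g.\ for $X_0(23)$ and $X_0(47)$ over $\Q(\sqrt{-10})$ and for $X_0(71)$ over $\Q(\sqrt{7})$ in Table 7.1), so your argument as written does not cover all the instances the paper relies on. In residue characteristic $2$ both steps of your linearisation fail --- one cannot average against $w_N^*$, and a nontrivial involution of a curve in characteristic $2$ acts \emph{unipotently}, hence trivially, on the cotangent space at a fixed point --- so the local analysis of the fixed locus and the lifting problem are structurally different, which is precisely why \"{O}zman treats $p=2$ by a separate argument. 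To make the proposal complete you would need either to supply that case or to restrict the statement to odd $p$.
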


Detrmining whether a prime $p$ is in the set $S_N$ is achieved by applying Proposition 4.6 of \emph{loc. cit.}. Although this proposition is stated as only applying for odd primes, it also holds for $p = 2$, as proved by \"{O}zman in a separate argument appearing between Remark~4.8 and Theorem~4.10 in \emph{loc. cit.}. The function \path{oezman_sieve(p,N)} in \path{quadratic_isogeny_primes.py} in the repository has implemented the concrete criterion expressed in \"{O}zman's Proposition 4.6, and returns \path{True} if and only if $p \in S_N$. 

We frame this method based on \"{O}zman's results into the following.

\begin{proposition}[\"{O}zman sieve]\label{prop:oezman_sieve}
Let $N \neq 37$ be a square-free integer, and $p$ a prime which ramifies in the quadratic field $\Q(\sqrt{d})$ and is unramified in $\Q(\sqrt{-N})$. Then, if \verb|oezman_sieve(p,N)| returns False, the twisted modular curve $X^d(N)$ has no $\Q$-rational point, and hence $X_0(N)$ does not admit a non-exceptional $\Q(\sqrt{d})$-point.
\end{proposition}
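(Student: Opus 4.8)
The plan is to chain together the three results the excerpt has assembled: the characterisation of non-exceptional points as elliptic $\Q$-curves, the twisting correspondence that turns such points into $\Q$-rational points on $X^d(N)$, and \"{O}zman's local solubility criterion (\Cref{thm:ozman_sieve}). The logical structure is a contrapositive: I would assume that $X_0(N)$ \emph{does} admit a non-exceptional $\Q(\sqrt{d})$-point and derive that $X^d(N)(\Q_p) \neq \emptyset$, which by \Cref{thm:ozman_sieve} forces $p \in S_N$, contradicting the hypothesis that \verb|oezman_sieve(p,N)| returns \textup{False}.

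\textbf{First I would} record the twisting step. Suppose $x \in X_0(N)(\Q(\sqrt{d}))$ is a non-exceptional quadratic point. By the combined theorem of Bruin--Najman, \"{O}zman--Siksek and Box (using $N \neq 37$ and that $X_0(N)$ has the relevant genus), $x$ corresponds in the moduli interpretation to an elliptic $\Q$-curve. As recalled in the discussion citing \cite{ozman2012points} and \cite{ellenberg2004qcurves}, an elliptic $\Q$-curve of this kind, defined over $K = \Q(\sqrt{d})$, gives rise to a $\Q$-rational point on the quadratic twist $X^d(N)$ of $X_0(N)$ by $K$. Thus from a non-exceptional $K$-point on $X_0(N)$ we obtain $X^d(N)(\Q) \neq \emptyset$. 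Since a global rational point is in particular a local point at every place, we get $X^d(N)(\Q_p) \neq \emptyset$ for our prime $p$.

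\textbf{Next I would} invoke the local criterion. The hypotheses of \Cref{prop:oezman_sieve} are exactly the hypotheses of \Cref{thm:ozman_sieve}: $N \neq 37$ is square-free, $p$ ramifies in $\Q(\sqrt{d})$, and $p$ is unramified in $\Q(\sqrt{-N})$. \Cref{thm:ozman_sieve} then asserts that $X^d(N)(\Q_p) \neq \emptyset$ \emph{if and only if} $p \in S_N$. Combined with the previous paragraph, the existence of the non-exceptional point forces $p \in S_N$. But the function \verb|oezman_sieve(p,N)| has been constructed (via \"{O}zman's Proposition 4.6, extended to $p=2$ as noted) to return \textup{True} precisely when $p \in S_N$; so the assumption that it returns \textup{False} means $p \notin S_N$. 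This is the sought contradiction, and it establishes that no non-exceptional $\Q(\sqrt{d})$-point on $X_0(N)$ can exist, equivalently that $X^d(N)$ has no $\Q$-rational point.

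\textbf{The main obstacle} is not any deep estimate but making the bookkeeping of the twisting correspondence precise: one must be careful that a \emph{non-exceptional} $K$-point on $X_0(N)$ (rather than an arbitrary quadratic point) is what gives a genuine $\Q$-point on the twist $X^d(N)$, and that the specific twist is by the discriminant $d$ rather than some other quadratic character. This is exactly the content surveyed in \cite{ozman2012points} and \cite{ellenberg2004qcurves}, so I would cite those for the correspondence rather than reprove it; the remainder of the argument is a direct application of \Cref{thm:ozman_sieve} together with the correctness of the implementation of \verb|oezman_sieve|.
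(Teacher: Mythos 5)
Your proposal is correct and follows essentially the same route the paper takes (the paper leaves this proof implicit in the discussion preceding the proposition): a non-exceptional $\Q(\sqrt{d})$-point corresponds to a $\Q$-curve, hence a $\Q$-point on $X^d(N)$, hence a $\Q_p$-point, which by \Cref{thm:ozman_sieve} forces $p \in S_N$, contradicting \verb|oezman_sieve(p,N)| returning \textup{False}. No gaps.
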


\begin{remark}
\begin{enumerate}
	\item
The reason for excluding $N = 37$ here is that it admits two distinct sources of non-exceptional quadratic points, and we lack a moduli interpretation for the elliptic curves corresponding to one of these; see Section 5 of \cite{box2021quadratic} for more details. For our purposes, since we know that $37 \in \IsogPrimeDeg(K)$ for every number field $K$, this will not be an issue for us.
	\item
Although \Cref{thm:ozman_sieve} requires the prime $p$ to be unramified in $\Q(\sqrt{-N})$, the implication
\[ p \notin S_N \Longrightarrow X^d(N)(\Q_p) = \emptyset\]
still holds if $p$ ramifies in both $\Q(\sqrt{d})$ and $\Q(\sqrt{-N})$; this is Proposition 4.5 of \cite{ozman2012points}. However, the concrete interpretation of $S_N$ given in Proposition 4.6 (which is implemented in \path{oezman_sieve}) requires that $p$ be unramified in $\Q(\sqrt{-N})$, so one cannot apply \verb|oezman_sieve(p,N)| at primes $p$ which ramify in \emph{both} $\Q(\sqrt{d})$ and $\Q(\sqrt{-N})$. Note that a literal reading of Parts (5) and (6) of Theorem 1.1 of \cite{ozman2012points} does suggest that we can apply \verb|oezman_sieve(p,N)| at any $p$ which ramifies in $\Q(\sqrt{d})$, and the confusion arises because the $S_N$ in Part (5) refers to the concrete description given in Proposition 4.6, while the $S_N$ in Part (6) refers to the original abstract definition given in the discussion between Proposition 4.5 and Proposition 4.6. We wish to make future readers aware of this possible source of confusion.
\end{enumerate}
\end{remark}

We illustrate the techniques hitherto collected to determine $\IsogPrimeDeg(\Q(\sqrt{5}))$. The techniques will subsequently be applied in the other cases of $\Q(\sqrt{-10})$ and $\Q(\sqrt{7})$, thereby proving \Cref{thm:main}. A fuller discussion of the implementation of the code for this section may be found in Section 8 of the user's guide of \emph{Quadratic Isogeny Primes}.

\begin{example}[$K = \Q(\sqrt{5})$]
Running the algorithm with \verb|--aux_prime_count 25| yields a superset as follows:
\[ \IsogPrimeDeg(\Q(\sqrt{5})) \subseteq \left\{\mbox{primes} \leq 71\right\} \cup \left\{73, 79, 163 \right\}. \]
We know that $\IsogPrimeDeg(\Q) \subseteq \IsogPrimeDeg(\Q(\sqrt{5}))$; so our task is to decide, for each $p$ in the following list, whether $X_0(p)(K)$ has any rational points beyond the two cusps defined over $\Q$:
\[\left\{23, 29, 31, 41, 47, 53, 59, 61, 71, 73, 79 \right\}.\]
\subsection*{Primes 23 and 47}
A search for $K$-rational points on $X_0(23)$ and $X_0(47)$ reveals that these modular curves do indeed admit $K$-points which are not $\Q$-points, showing that $23$ and $47$ are inside the set $\IsogPrimeDeg(K)$. Magma may also be used to compute the $j$-invariants of the elliptic curves supporting these prime-degree isogenies.

\subsection*{Primes 29 and 31}
These are both genus $2$ hyperelliptic curves, and as such their quadratic points were determined by Bruin and Najman \cite{bruin2015hyperelliptic}. We find (Tables 5 and 7 of \emph{loc. cit.}) that there are no exceptional points defined over $K$. To rule out the existence of non-exceptional points we apply the Twist-Chabauty0 method, since the ranks of the Jacobians of the $5$-twists of both of these modular curves can be determined in Magma to be zero.

\begin{remark}
The \"{O}zman sieve may also be used to rule out the non-exceptional $K$-points here; indeed in Example 4.11 of \cite{ozman2012points} the author shows that $X^5(29)(\Q_5) = \emptyset$.
\end{remark}

\subsection*{Prime 41}
Table 12 of \cite{bruin2015hyperelliptic} shows that this modular curve does not have an exceptional $K$-point. To rule out the existence of a non-exceptional $K$-point we run \verb|oezman_sieve(5,41)| which yields \verb|False| and hence we can conclude by \Cref{prop:oezman_sieve}.

\subsection*{Prime 53}
This is the first prime that is not in the list of Bruin and Najman. Fortunately Box \cite{box2021quadratic} has determined (Section 4.2 of \emph{loc. cit.}) that all quadratic points on $X_0(53)$ are non-exceptional. We may thus apply the \"{O}zman sieve to show that $53 \notin \IsogPrimeDeg(K)$. 

Note that this case also follows from Theorem 2.5 in \cite{gonzalez2004arithmetic}, since $53 \equiv 1 \Mod{4}$, but is not a norm of $K$.

\subsection*{Primes 59, 61, 71}
Table 17 of \cite{bruin2015hyperelliptic}, Section 4.3 of \cite{box2021quadratic}, and Table 18 of \cite{bruin2015hyperelliptic} respectively show that $X_0(59)$, $X_0(61)$, and $X_0(71)$ do not contain any exceptional quadratic points. The \"{O}zman sieve then shows that none of these three primes are in $\IsogPrimeDeg(K)$.

\subsection*{Prime 73}
Section 4.7 of \cite{box2021quadratic} determines all 11 quadratic points on $X_0(73)$ (besides the cusps), and none are defined over $K$. Note that this result relies on the determination of the $\Q$-points on $X_0^+(73)$, completed by Balakrishnan, Best, Bianchi, Lawrence, M\"{u}ller, Triantafillou and Vonk (Theorem 6.3 in \cite{balakrishnan2019two}).

\subsection*{Prime 79}
This is dealt with in the appendix to this paper, written jointly with Derickx.
\end{example}

Using the methods described in the above example, we may now prove the remaining two cases of \Cref{thm:main} more succinctly.

\begin{proof}[Proof of Theorem \ref{thm:main}]
\Cref{tab:main} collects the relevant information for showing that the primes in the set
\[\left\{23, 29, 31, 41, 47, 53, 59, 61, 71, 73 \right\}\]
do not arise as isogeny primes for $\Q(\sqrt{7})$ or $\Q(\sqrt{-10})$. The relevant result from the tables of Bruin-Najman and Box are cited, which show that, for each $p$ as above, if $p$ is an isogeny prime, then it must arise as a non-exceptional $K$-point; equivalently, arises as a quadratic $\Q$-curve. We also take this opportunity to summarise the case of $\Q(\sqrt{5})$, allowing all three instances to be summarised in one table. The entries in the table are then one of the following:

\begin{enumerate}
	\item
	TC0 - referring to the Twist-Chabauty0 method which deals with that case;
	\item
	the value \verb|q| at which \verb|oezman_sieve(q,p)| yields \verb|False|, showing that no such quadratic $\Q$-curve exists;
	\item
	`A.2' - referring to applying the method in the Appendix, given by \Cref{lemma:main}. This only applies to one case, and the Magma code to verify it may be found in \path{magma_scripts/appendix.m};
	\item
	\ding{55} - referring to the cited reference allowing us to conclude negatively immediately;
	\item
	$\checkmark$ - referring to that prime being an isogeny prime for that quadratic field;
	\item
	--- -  referring to that case not being relevant (this only applies to the prime $79$, where for $\Q(\sqrt{-10})$ and $\Q(\sqrt{7})$ we know that $79$ is not a possible isogeny prime).
\end{enumerate}

Running \verb|sage quadratic_isogeny_primes D --aux_prime_count 25|, for $D = 7$ and $-10$ then shows that $163$ is the only isogeny prime greater than $73$, and the proof of \Cref{thm:main} is complete.
\end{proof}

\begin{table}[htp]
\begin{center}
\begin{tabular}{|c|c|c|c|c|}
\hline
$p$ & Reference & $\Q(\sqrt{-10})$ & $\Q(\sqrt{5})$ & $\Q(\sqrt{7})$\\
\hline
$23$ &  Table 2 of Bruin-Najman & $2$ & \checkmark & 2\\
$29$ & Table 5 of Bruin-Najman & TC0 & TC0 & TC0\\
$31$ & Table 7 of Bruin-Najman & TC0 & TC0 & TC0\\
$41$ & Table 12 of Bruin-Najman & $5$ & $5$ & $7$\\
$47$ & Table 14 of Bruin-Najman & $2$ & $\checkmark$ & $7$\\
$53$ & Section 4.2 of Box, \Cref{appendix}  & $5$ & $5$ & A.2\\
$59$ & Table 17 of Bruin-Najman & $5$ & $5$ & $7$\\
$61$ & Section 4.3 of Box & $5$ & $5$ & $7$\\
$71$ & Table 18 of Bruin-Najman & $5$ & $5$ & $2$\\
$73$ & Section 4.7 of Box & \ding{55} & \ding{55} & \ding{55}\\
$79$ & \Cref{appendix} & --- & \ding{55} & ---\\
\hline
\end{tabular}
\vspace{0.3cm}
\caption{\label{tab:main}Deciding on possible isogeny primes. The numbers in the table correspond to primes where the \"{O}zman sieve was applied. ``TC0'' refers to an application of the Twist-Chabauty0 method, ``A.2'' refers to an application of \Cref{lemma:main}, \ding{55} means the reference cited is enough to decide negatively on that prime, $\checkmark$ means the prime is an isogeny prime for that field, and ``---'' means the corresponding case is not relevant. Bruin-Najman refers to \cite{bruin2015hyperelliptic} and Box to \cite{box2021quadratic}. In all cases apart from where $\checkmark$ is present, this table is ruling out primes.}
\end{center}
\end{table}
 
\begin{appendices}

\section{On $X_0(79)(\Q(\sqrt{5}))$}\label{appendix}
\begin{center}by Barinder S. Banwait and Maarten Derickx\end{center}
\medskip

In this appendix we establish the following result.

\begin{proposition}\label{prop:main}
The set of $\Q(\sqrt{5})$-rational points on the modular curve $X_0(79)$ consists only of the two $\Q$-rational cusps.
\end{proposition}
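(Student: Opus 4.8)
The plan is to exploit the fact that $X_0(79)$ is bielliptic: the Atkin--Lehner involution $w_{79}$ has a genus-one quotient. Counting the fixed points of $w_{79}$ via class numbers one finds that $X_0(79)^+ := X_0(79)/w_{79}$ has genus $1$, so it is an elliptic curve $E/\Q$ (the curve of conductor $79$, of Mordell--Weil rank $1$), and the quotient map $d\colon X_0(79)\to E$ is a degree-two cover defined over $\Q$. Since $w_{79}$ is defined over $\Q$, any point $P\in X_0(79)(\Q(\sqrt5))$ has $w_{79}(P)\in X_0(79)(\Q(\sqrt5))$ as well; hence the whole fibre $d^{-1}(d(P))=\{P,w_{79}(P)\}$ is rational over $\Q(\sqrt5)$, and $R:=d(P)$ is a point of $E(\Q(\sqrt5))$ that lifts to a $\Q(\sqrt5)$-rational point of the cover. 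The first step is therefore to reduce the determination of $X_0(79)(\Q(\sqrt5))$ to the determination of those $R\in E(\Q(\sqrt5))$ whose $d$-fibre is $\Q(\sqrt5)$-rational.

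The second step is a Mordell--Weil computation. I would compute $E(\Q)$ explicitly (rank $1$, with an explicit generator and torsion) and then---this is the crucial input---show that the quadratic twist $E^{(5)}$ has rank $0$ over $\Q$, for instance by a $2$-descent or by verifying analytic rank $0$ together with the relevant isogeny and Tate--Shafarevich bounds. Since $\operatorname{rank} E(\Q(\sqrt5))=\operatorname{rank} E(\Q)+\operatorname{rank} E^{(5)}(\Q)$, this gives $\operatorname{rank} E(\Q(\sqrt5))=1<2=[\Q(\sqrt5):\Q]$, which is exactly the inequality needed to run Chabauty's method on the elliptic curve $E$ over the number field $K:=\Q(\sqrt5)$.

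With the rank inequality in hand, the third step is to apply elliptic-curve Chabauty (in the style of Bruin and Flynn--Wetherell, as implemented in Magma) to the cover $d\colon X_0(79)\to E$ over $K$: using explicit generators of the finite-rank group $E(K)$ and working $p$-adically at a suitable prime of $K$, one obtains a provably complete, finite list of the points of $E(K)$ that lift to $K$-rational points of $X_0(79)$. The final step is bookkeeping: since $X_0(79)(\Q)$ consists only of the two cusps (as $79\notin\IsogPrimeDeg(\Q)$ by Mazur), one checks that every point produced by the Chabauty computation is one of these two cusps, which yields \Cref{prop:main}.

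I expect the main obstacle to be the rank-zero claim for the twist $E^{(5)}$ together with the completeness of the Chabauty output: one must ensure that the chosen auxiliary prime(s) of $K$ separate the finitely many lifts and that none is missed, which in practice may require combining the Chabauty computation with a Mordell--Weil sieve and a careful treatment of the primes of bad reduction and of the ramification points of $d$.
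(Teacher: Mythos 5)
Your step 3 is where the argument breaks down. Elliptic curve Chabauty in the sense of Bruin and Flynn--Wetherell computes, for an elliptic curve $E$ over a number field $K$ with $\operatorname{rank} E(K) < [K:\Q]$ and a non-constant map $u\colon E \to \PP^1$ defined over $K$, the finite set $\left\{R \in E(K) : u(R) \in \PP^1(\Q)\right\}$; the finiteness comes from the fact that the condition $u(R) \in \PP^1(\Q)$ cuts out a \emph{curve} inside the two-dimensional restriction of scalars of $E$, which then meets the (at most one-dimensional) closure of $E(K)$ in a zero-dimensional set. The set you ask the method to compute --- the $R \in E(K)$ whose fibre $d^{-1}(R)$ is pointwise $\Q(\sqrt5)$-rational --- carries no such $\Q$-rationality condition: ``$R$ lifts to a $K$-point of $X_0(79)$'' is an open, square-class condition on $R$, not a positive-codimension one, so $d\bigl(X_0(79)(K\otimes\Q_p)\bigr)$ remains two-dimensional and its intersection with the closure of $E(K)$ is generically one-dimensional. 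The inequality $\operatorname{rank}E(\Q(\sqrt5)) = 1 < 2$ therefore yields neither finiteness nor an algorithm for your set (finiteness of $X_0(79)(\Q(\sqrt5))$ is of course known by Faltings, but not effectively), and no implementation of elliptic curve Chabauty performs this computation. A secondary point: your plan rests on the unverified claim that the quadratic twist of 79a1 by $5$ has rank $0$, and more structurally you are working with the \emph{plus} part of the Atkin--Lehner decomposition, which is precisely the part with positive rank.

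The paper instead exploits the \emph{minus} part, where the rank-zero structure lives: $J_0(79)_-$ is a five-dimensional simple abelian variety with $J_0(79)_-(\Q(\sqrt5)) = J_0(79)_-(\Q) \cong \ZZ/13\ZZ$ (its twist by $5$ also has rank $0$, and the torsion does not grow, as one checks by counting points over $\F_{p^2}$ for $p = 2,3,5$). The map $f\colon x \mapsto w_{79}(x) - x$ lands in $J_0(79)_-$ and is injective away from the $w_{79}$-fixed locus because the gonality of $X_0(79)$ is $4 > 2$. Hence for $x \in X_0(79)(\Q(\sqrt5))$ the class $f(x)$ lies in $J_0(79)_-(\Q)$ and is Galois-stable, so by injectivity $x$ itself is $\Q$-rational unless it is fixed by $w_{79}$; the fixed points have CM by $\Q(\sqrt{-79})$, whose class number $5$ prevents them from being quadratic, and Mazur's theorem then leaves only the two cusps. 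To salvage your bielliptic idea you would need to replace step 3 by a descent of this kind on the minus quotient (or by Chabauty on the full genus-$6$ Jacobian over $K$), not by elliptic curve Chabauty on the plus quotient.
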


We begin with setting up notation and definitions.
\begin{align*}
p &= \mbox{prime};\\
J_0(p) &= \mbox{Jacobian variety of } X_0(p);\\
w_p &= \mbox{Atkin-Lehner involution on } X_0(p);\\
J_0(p)_+ &= \mbox{the sub-abelian variety }(1 + w_p)\cdot J_0(p);\\
J_0(p)_- &= \mbox{the sub-abelian variety }(1 - w_p)\cdot J_0(p).
\end{align*}
These latter subvarieties are referred to as the $+$ and $-$ parts of $J_0(p)$ respectively, and give a decomposition of $J_0(p)$ on which $w_p$ acts as $+1$, respectively~$-1$. See Chapter 2, Section 10 of \cite{Mazur3} for more details.

Recall that the \textbf{gonality} of a geometrically integral curve $X$ over a field $K$ is the smallest possible degree of a dominant rational map $X \to \PP^1_K$. Throughout this appendix we will be concerned solely with the case that the gonality of a curve is strictly greater than~$2$; equivalently, we assume throughout that $X$ is neither hyperelliptic, nor of genus~$0$ or $1$.

\Cref{prop:main} will follow as a corollary of the following lemma.

\begin{lemma}\label{lemma:main}
Let $K$ be a number field, and $p$ a prime such that
\begin{enumerate}
    \item $J_0(p)_-(\Q) = J_0(p)_-(K)$;
    \item the gonality of $X_0(p)$ is strictly greater than~$2$.
\end{enumerate}

Then for all $x \in X_0(p)(K)$, $x$ satisfies one of the following:
\begin{dichotomy}
\item $x \in X_0(p)(\Q)$;
\item $x$ is fixed by $w_p$; in particular the elliptic curve corresponding to $x$ has complex multiplication by an order of $\Q(\sqrt{-p})$.
\end{dichotomy}
\end{lemma}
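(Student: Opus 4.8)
The natural tool is the Abel--Jacobi map composed with the projection to the minus part. Fix the $\Q$-rational cusp $\infty$ and let $\iota\colon X_0(p)\to J_0(p)$, $x\mapsto[x-\infty]$, be the associated Abel--Jacobi embedding, which is defined over $\Q$. Since $w_p$ is defined over $\Q$, so is the endomorphism $1-w_p$, and its image lies in $J_0(p)_-$. I would therefore study the morphism
\[
f := (1-w_p)\circ\iota\colon X_0(p)\longrightarrow J_0(p)_-,
\]
which is defined over $\Q$. The one computation I need is that $f$ is insensitive to the base point in differences: for any two geometric points $x,x'$,
\[
f(x)-f(x') = (1-w_p)[x-x'] = \bigl[(x+w_px')-(x'+w_px)\bigr].
\]
(Here I also use that $w_p$ swaps the two cusps of $X_0(p)$, though the base point cancels anyway.)

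\textbf{The main argument.} Take $x\in X_0(p)(K)$ and suppose, for contradiction, that $x\notin X_0(p)(\Q)$ \emph{and} $w_px\neq x$. Because $f(x)\in J_0(p)_-(K)$, hypothesis~(1) forces $f(x)\in J_0(p)_-(\Q)$, so $f(x)$ is fixed by $\Gal(\overline{\Q}/\Q)$. Since $x\notin X_0(p)(\Q)$, I may choose $\sigma\in\Gal(\overline{\Q}/\Q)$ with $x':=x^{\sigma}\neq x$; then, as $f$ is defined over $\Q$,
\[
f(x') = f(x)^{\sigma} = f(x),
\]
the last equality because $f(x)$ is $\Q$-rational. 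By the displayed difference formula this gives the linear equivalence of \emph{effective degree-two divisors}
\[
x+w_px' \ \sim\ x'+w_px .
\]

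\textbf{The gonality dichotomy and conclusion.} If these two divisors are distinct, then $\dim|x+w_px'|\ge 1$, so $X_0(p)$ carries a $g^1_2$ and is (geometrically) hyperelliptic; as gonality $>2$ also forces genus $\ge 2$, this contradicts hypothesis~(2). Hence the two divisors are \emph{equal}, i.e.\ $\{x,w_px'\}=\{x',w_px\}$ as multisets. Matching the points leaves only two possibilities: either $x=x'$, which is excluded by the choice of $\sigma$, or $x=w_px$, which contradicts the assumption $w_px\neq x$. Both are impossible, so the supposition fails and every $x\in X_0(p)(K)$ satisfies $x\in X_0(p)(\Q)$ or $w_px=x$, which is the asserted dichotomy \dicho{dichotomy}. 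The final ``in particular'' is the classical fact that a $w_p$-fixed point $(E,C)$ satisfies $(E,C)\cong(E/C,E[p]/C)$, whence the composite of the quotient isogeny with this isomorphism is a degree-$p$ endomorphism of $E$; as $p$ is prime this is not multiplication by an integer, so $E$ has complex multiplication by an order of $\Q(\sqrt{-p})$.

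\textbf{Main obstacle.} The conceptual content is modest once $f$ is in place; the step needing care is the gonality dichotomy. I must make sure the $g^1_2$ produced lives over $\overline{\Q}$ and genuinely contradicts condition~(2): this is where I should record that, for a genus-$\ge 2$ curve with a rational point (here the cusps), having $\Q$-gonality $>2$ is equivalent to \emph{geometric} non-hyperellipticity, so the equivalence over $\overline{\Q}$ does yield the contradiction. The remaining bookkeeping --- the multiset comparison and the elimination of the two matchings --- is routine but must be stated explicitly to close both alternatives of the dichotomy.
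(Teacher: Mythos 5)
Your proof is correct and follows essentially the same route as the paper: both use the morphism $x\mapsto (1-w_p)x$ into $J_0(p)_-$, hypothesis (1) to force its value to be $\Gal(\overline{\Q}/\Q)$-invariant, the gonality hypothesis to upgrade linear equivalence of the resulting degree-two divisors to equality, and the same multiset matching to conclude. The only difference is organisational — the paper isolates the injectivity of $f$ away from $f^{-1}(0)$ as a separate lemma and then argues via Galois-stability of the singleton fibre, whereas you inline that argument by comparing $x$ directly with a conjugate $x^\sigma$ — and your attention to the field of definition of the $g^1_2$ is, if anything, slightly more careful than the paper's.
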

Before proving the lemma we first verify that the conditions (1) and (2) are satisfied in our situation for $K = \Q(\sqrt{5})$ and $p = 79$.

\begin{lemma}\label{lem:no_extra_points}
$J_0(79)_-(\Q(\sqrt{5})) = J_0(79)_-(\Q) \cong \ZZ/13\ZZ$.
\end{lemma}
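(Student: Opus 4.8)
The plan is to establish the two assertions separately: that $J_0(79)_-(\Q)\cong\ZZ/13\ZZ$, and that passing to $K=\Q(\sqrt 5)$ introduces neither new torsion nor positive rank. The group $\ZZ/13\ZZ$ is already visible, since the cuspidal divisor class $(0)-(\infty)$ has order equal to the numerator of $(79-1)/12$, namely $13$, and as $w_{79}$ interchanges the two cusps it acts as $-1$ on this class, placing it in $J_0(79)_-$. By Mazur's theorem the full rational torsion of $J_0(79)$ is cuspidal, so $J_0(79)_-(\Q)_{\mathrm{tors}}=\ZZ/13\ZZ$; combined with the vanishing of the rank established below, this gives $J_0(79)_-(\Q)\cong\ZZ/13\ZZ$. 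It therefore suffices to prove that $J_0(79)_-(K)$ has rank $0$ and torsion exactly $\ZZ/13\ZZ$.

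For the torsion over $K$, I would bound it by reduction. Choose rational primes $q\neq 79$ that split in $K$ (so $q\equiv\pm1\Mod 5$, e.g.\ $q=11,19,31$); for a prime $\mathfrak q\mid q$ of $K$ the residue field is $\F_q$ and $J_0(79)$ has good reduction there. For such odd $q$ the reduction map is injective on prime-to-$q$ torsion, so $\#J_0(79)_-(K)_{\mathrm{tors}}$ divides $\#J_0(79)_-(\F_q)$ away from $q$; the latter is computed from the characteristic polynomials of $\mathrm{Frob}_q$ on the minus-part newform factors, equivalently from the Hecke eigenvalues $a_q$. Taking the gcd over two or three such primes should pin the order down to $13$, and since $\ZZ/13\ZZ\subseteq J_0(79)_-(\Q)\subseteq J_0(79)_-(K)$ we conclude the torsion is exactly $\ZZ/13\ZZ$.

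For the rank, I would use the standard quadratic-twist decomposition: writing $\chi$ for the quadratic character attached to $K$ and $J_0(79)_-^{\chi}$ for the corresponding twist, the isomorphism $J_0(79)_-(K)\otimes\Q\cong(J_0(79)_-(\Q)\otimes\Q)\oplus(J_0(79)_-^{\chi}(\Q)\otimes\Q)$ yields $\operatorname{rank}J_0(79)_-(K)=\operatorname{rank}J_0(79)_-(\Q)+\operatorname{rank}J_0(79)_-^{\chi}(\Q)$, so it is enough to show both summands vanish. Decompose $J_0(79)_-$ up to isogeny into modular abelian varieties $A_f$ attached to the newforms $f\in S_2(\Gamma_0(79))$ with $w_{79}$-eigenvalue $-1$; each such $f$ has root number $+1$, and since $-79\equiv1\Mod 5$ gives $\chi(-79)=1$, the twist $f\otimes\chi$ also has root number $+1$. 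One then verifies that the central values $L(f,1)$ and $L(f\otimes\chi,1)$ are nonzero for every such $f$ (a finite modular-symbol computation, carried out in Magma) and invokes the theorem of Kolyvagin--Logachev for modular abelian varieties to deduce that $A_f(\Q)$ and $A_{f\otimes\chi}(\Q)$ are finite. Hence both ranks are $0$, whence $J_0(79)_-(K)$ has rank $0$, and together with the torsion computation $J_0(79)_-(K)=J_0(79)_-(\Q)\cong\ZZ/13\ZZ$.

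The main obstacle is the rank statement over $K$, and specifically the twisted part: one must make the numerical non-vanishing of the central $L$-values provably exact rather than merely numerically small, and one relies on Kolyvagin--Logachev to convert analytic rank $0$ into genuine finiteness of the Mordell--Weil group. By contrast the torsion bound is essentially routine, requiring only point counts on the reductions at a few split primes together with the injectivity of reduction on prime-to-$q$ torsion.
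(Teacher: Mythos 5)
Your proposal is correct and follows essentially the same strategy as the paper: Mazur's theorems give $J_0(79)_-(\Q)_{\mathrm{tors}} \cong \ZZ/13\ZZ$, the rank over $K$ vanishes because both $J_0(79)_-$ and its quadratic twist have rank $0$ over $\Q$, and the torsion over $K$ is bounded by point counts on reductions at small primes. The only differences are cosmetic: you unpack the rank-$0$ claim into root numbers, non-vanishing of central $L$-values and Kolyvagin--Logachev (which is exactly what underlies the LMFDB rank data the paper cites), and you reduce at split primes with residue field $\F_q$ where the paper counts points over $\F_{p^2}$ for $p = 2, 3, 5$; in both cases the gcd computation pinning the torsion order to $13$ must of course actually be carried out.
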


\begin{proof}
By considering Galois orbits of newforms of weight~$2$ and level~$\Gamma_0(79)$ (which can be readily done using the \href{https://www.lmfdb.org/ModularForm/GL2/Q/holomorphic/?hst=List&level=79&weight=2&char_order=1&search_type=List}{\emph{$L$-functions and Modular Forms Database}} \cite{LMFDB}), we find that $J_0(79)_+$ is the elliptic curve with Cremona label 79a1 \cite{cremona1997algorithms}, and $J_0(79)_-$ is the five-dimensional modular abelian variety associated to the Galois orbit of weight~$2$ newform with LMFDB label \href{https://www.lmfdb.org/ModularForm/GL2/Q/holomorphic/79/2/a/b/}{79.2.a.b}, which is simple over $\Q$.

Both $J_0(79)_-$ and its $\Q(\sqrt{5})$-twist (corresponding to the newform with LMFDB label \href{https://www.lmfdb.org/ModularForm/GL2/Q/holomorphic/1975/2/a/k/}{1975.2.a.k}) have rank~$0$ over $\Q$, so $J_0(79)_-$ has rank~$0$ over $\Q(\sqrt{5})$. Additionally, from Theorem~1 and Theorem~3 of \cite{Mazur3} we know that the torsion subgroup of $J_0(79)_-(\Q)$ is isomorphic to $\ZZ/13\ZZ$.

It remains to show that the torsion subgroup of $J_0(79)_-(\Q(\sqrt{5}))$ is isomorphic to $\ZZ/13\ZZ$ as well. This can be readily checked in Magma (see the file \path{magma_scripts/appendix.m} in \cite{isogeny_primes}) by computing the number of $\F_{p^2}$-rational points on $J_0(79)$ for $p=2,3,5$. The GCD of the resulting values is 13, so the torsion subgroup of $$J_0(79)_-(\Q(\sqrt{5})) \subseteq J_0(79)(\Q(\sqrt{5}))$$ is at most of order 13. But since $J_0(79)_-(\Q)_{tors} \cong \ZZ/13\ZZ$, it has order exactly $13$.
\end{proof}
\begin{remark}
The above proof actually shows that the torsion of $J_0(79)$ does not grow when considered over an arbitrary quadratic field, so that for quadratic fields $K$ one has $J_0(79)_-(\Q) = J_0(79)_-(K)$ if and only if the quadratic twist of $J_0(79)_-$ by $K$ has rank 0.
\end{remark}

\begin{proposition}
$X_0(79)$ has gonality $4$.
\end{proposition}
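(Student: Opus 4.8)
The plan is to bound the $\Q$-gonality both above and below by $4$. Since the $\Q$-gonality dominates the geometric gonality, it suffices to exhibit a degree-$4$ map to $\PP^1$ defined over $\Q$ and to show that the geometric gonality is at least $4$.

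For the upper bound I would use the Atkin--Lehner quotient $X_0(79)^+ := X_0(79)/w_{79}$. By the proof of \Cref{lem:no_extra_points}, its Jacobian is $J_0(79)_+$, the elliptic curve 79a1; hence $X_0(79)^+$ has genus $1$. Since $w_{79}$ interchanges the two $\Q$-rational cusps $0$ and $\infty$, their common image is a $\Q$-rational point, so $X_0(79)^+$ is an elliptic curve over $\Q$. Composing the degree-$2$ quotient map $X_0(79) \to X_0(79)^+$ with the degree-$2$ map $X_0(79)^+ \to \PP^1$ given by a Weierstrass $x$-coordinate produces a dominant map of degree $4$ defined over $\Q$, so the $\Q$-gonality is at most $4$.

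For the lower bound I would invoke the Castelnuovo--Severi inequality: over $\overline{\Q}$, if a curve $X$ of genus $g$ carries morphisms $f\colon X \to Y$ and $h \colon X \to Z$ of degrees $m$ and $n$ which do not both factor through a common morphism of degree $>1$, then $g \le m\,g_Y + n\,g_Z + (m-1)(n-1)$. Recall that $g(X_0(79)) = 6$, since $\dim J_0(79) = 1 + 5$ by the proof of \Cref{lem:no_extra_points}, while $g(X_0(79)^+) = 1$. Suppose for contradiction the geometric gonality is at most $3$; base-changing to $\overline{\Q}$ yields a map $f \colon X_0(79) \to \PP^1$ of degree $m \in \{2,3\}$, which I would pair with $\pi \colon X_0(79) \to X_0(79)^+$ of degree $2$. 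If $f$ and $\pi$ are independent, Castelnuovo--Severi reads $6 \le 0 + 2 + (m-1) = m+1 \le 4$, a contradiction. Otherwise they factor through a common morphism of some degree $e > 1$ dividing $\gcd(m,2)$; this is impossible when $m = 3$, and when $m = 2$ it forces $e = 2$, identifying $\pi$ with the common morphism and hence $X_0(79)^+$ with $\PP^1$ --- contradicting $g(X_0(79)^+) = 1$. Thus the geometric gonality is at least $4$, and combining the two bounds gives gonality exactly $4$.

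The computation is routine once the genera of $X_0(79)$ and $X_0(79)^+$ are recorded, both of which are already available from \Cref{lem:no_extra_points}. The one step demanding genuine care is the non-independent branch of Castelnuovo--Severi: there I must use that $X_0(79)^+$ has genus exactly $1$ rather than $0$ in order to eliminate the hyperelliptic case $m = 2$. As a sanity check, non-hyperellipticity also follows immediately from Ogg's classification \cite{ogg1974hyperelliptic}, since $79$ is not a hyperelliptic level.
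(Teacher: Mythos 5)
Your proposal is correct, and the upper bound (degree $4$ via the degree-$2$ quotient to the elliptic curve $X_0(79)^+$) is exactly the paper's. Where you diverge is the lower bound: the paper rules out gonality $2$ by citing Ogg's classification of hyperelliptic $X_0(N)$ and rules out gonality $3$ by citing Theorem 2.1(e) of \cite{JEON2021344} on trigonal modular curves, whereas you dispose of both cases at once with a Castelnuovo--Severi argument played off against the degree-$2$ map $\pi\colon X_0(79)\to X_0(79)^+$. Your computation checks out: with $g(X_0(79))=6$ and $g(X_0(79)^+)=1$, independence gives $6\le m+1\le 4$ for $m\in\{2,3\}$, and the only possible common factorisation (the $m=2$, $e=2$ branch) would force $X_0(79)^+\cong\PP^1$, contradicting genus $1$; the coprimality of $3$ and $2$ correctly kills the factorisation branch for $m=3$. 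You are also right to separate the $\Q$-gonality (bounded above) from the geometric gonality (bounded below), since the former dominates the latter. What each approach buys: the paper's is shorter on the page because it outsources both exclusions to the literature; yours is self-contained modulo the standard Castelnuovo--Severi inequality, recovers non-hyperellipticity without Ogg, and establishes the slightly stronger statement that the \emph{geometric} gonality is $4$, which is in fact the natural hypothesis needed in the injectivity lemma where linear equivalence of degree-$2$ divisors over $\Q(\sqrt{5})$ is being ruled out.
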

\begin{proof}
Since the genus of $X_0(79)$ is 6 it is neither elliptic or rational. That it is not hyperelliptic is Theorem 2 of \cite{ogg1974hyperelliptic}. Thus the gonality is at least $3$.

On the other hand, the gonality is at most $4$, since $X_0^+(79) := X_0(79)/w_{79}$ is an elliptic curve (indeed, the same elliptic curve 79a1 identified in the proof of \Cref{lem:no_extra_points}).

That the gonality is not $3$ follows from Theorem 2.1 (e) of \cite{JEON2021344}.
\end{proof}

Having established that the conditions of \Cref{lemma:main} are satisfied in our case of interest, it is straightforward to show that \Cref{prop:main} follows from \Cref{lemma:main}.

\begin{proof}[Proof of \Cref{prop:main}]
Let $x \in X_0(79)(\Q(\sqrt{5}))$ be a point. From \Cref{lemma:main} we have that either $x \in X_0(79)(\Q)$ or the elliptic curve corresponding to $x$ has complex multiplication by $\Q(\sqrt{-79})$. Since $\Q(\sqrt{-79})$ has class number 5 we know that none of the fixed points of $w_p$ are defined over a quadratic field, let alone over $\Q(\sqrt{5})$; thus we must have $x \in X_0(79)(\Q)$. That $x$ must therefore be one of the two $\Q$-rational cusps follows from Theorem~1 of \cite{mazur1978rational}.
\end{proof}

Before proving \Cref{lemma:main}, we require one final lemma.

\begin{lemma}
Let $p$ be a prime such that the gonality of $X_0(p)$ is strictly greater than $2$, and let $f$ be the map 
\begin{align*}
    f : X_0(p) & \to J_0(p) \\
             x & \mapsto w_{p}(x) - x.
\end{align*} 
Then the following statements hold.
\begin{enumerate}[label=(\roman*)]
    \item $f$ is injective away from the fibre above $0$;
    \item The points in the fibre above $0$ are fixed points of $w_p$ and correspond to elliptic curves with complex multiplication by $\Q(\sqrt{-p})$.
\end{enumerate}
\end{lemma}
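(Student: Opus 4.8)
The plan is to deduce both statements from a single rigidity principle: on a curve of gonality greater than $2$ there is no nonconstant rational map to $\PP^1$ of degree at most $2$, so any nonzero principal divisor must have polar part of degree at least $3$. Since $f(x) = [w_p(x) - x]$ is by construction represented by a divisor supported on at most two points, this rigidity controls both the fibres of $f$ and its injectivity. (Note that gonality $> 2$ already forces genus $\geq 2$, so degree-$1$ maps are excluded as well.)

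For (i), I would suppose $f(x) = f(y)$ with $f(x) \neq 0$ and consider the degree-zero divisor $D := w_p(x) + y - x - w_p(y)$, which is principal because $[D] = f(x) - f(y) = 0$. Its polar part is supported on $\{x, w_p(y)\}$, so any function with divisor $D$ would give a map to $\PP^1$ of degree at most $2$; by the gonality hypothesis this function must be constant, i.e. $D = 0$. The resulting equality of effective divisors $w_p(x) + y = x + w_p(y)$ yields the multiset identity $\{w_p(x), y\} = \{x, w_p(y)\}$, whose solutions are $x = y$, or else $w_p(x) = x$ and $w_p(y) = y$. The latter would make $x$ a fixed point and hence $f(x) = 0$, contrary to assumption; so $x = y$, and $f$ is injective off $f^{-1}(0)$.

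For (ii), the same gonality argument shows that $f(x) = 0$ forces $w_p(x) = x$: otherwise $w_p(x) - x$ is a nonzero principal divisor of degree zero, giving a degree-$1$ map to $\PP^1$ and hence genus $0$. Thus the fibre above $0$ is exactly the fixed-point locus of $w_p$. To identify these as CM points, I would realise a fixed point as a cyclic $p$-isogeny $\phi \colon E \to E/C$ and use the fixed-point isomorphism $u \colon E \xrightarrow{\sim} E/C$ (sending $C$ to $\ker\hat\phi$) to form $\psi := \hat\phi \circ u \in \operatorname{End}(E)$, of degree $p$; since $p$ is prime and not a square, $\psi \notin \ZZ$ and $E$ has CM. The crucial computation is that both $\psi$ and its dual $\hat\psi = \hat u \circ \phi$ have kernel exactly $C$ (cyclic of order $p$), so the trace $t = \psi + \hat\psi$ annihilates $C$; hence $p \mid t$, and the Hasse bound $|t| \leq 2\sqrt{p}$ (with $p \geq 5$) forces $t = 0$. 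Then $\psi^2 = -p$, so $\ZZ[\psi] = \ZZ[\sqrt{-p}]$ and $E$ has complex multiplication by an order of $\Q(\sqrt{-p})$, as required.

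I expect the main obstacle to be this last step: pinning the CM field down to $\Q(\sqrt{-p})$ rather than merely establishing that $E$ is CM. The decisive input is that $\psi$ and $\hat\psi$ kill the \emph{same} cyclic group $C$, which is what forces the trace to vanish; verifying that both kernels equal $C$ — rather than, say, complementary subgroups of $E[p]$ — requires unwinding the moduli description of $w_p$ carefully, and this is where care is needed. By contrast, the gonality hypothesis does all the work in (i) and in reducing (ii) to the fixed-point locus, so those reductions should be routine.
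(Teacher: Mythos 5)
Your proof of (i) is exactly the paper's argument: linear equivalence of the degree-$2$ divisors $w_p(x)+y$ and $x+w_p(y)$ plus gonality $>2$ forces equality of divisors, and the multiset identity then gives $x=y$ or a pair of $w_p$-fixed points. The reduction of (ii) to the fixed-point locus is likewise the same (the paper phrases it as ``$X_0(p)$ is not rational''). The one place you diverge is the CM identification: the paper simply cites Section~2 of Ogg's hyperelliptic paper for the fact that $w_p$-fixed points correspond to curves with CM by $\Q(\sqrt{-p})$, whereas you prove it. Your argument is correct: with $u\colon E\xrightarrow{\sim}E/C$ carrying $C$ to $\ker\hat\phi$, the endomorphism $\psi=\hat\phi\circ u$ has degree $p$ (hence is non-integral), $\hat\psi=\hat u\circ\phi$, both kernels equal $C$, so the integer $t=\psi+\hat\psi$ kills a point of order $p$, giving $p\mid t$; since $|t|<2\sqrt{p}<p$ (note gonality $>2$ already forces $p\geq 23$, so the restriction $p\geq 5$ is automatic), $t=0$ and $\psi^2=-p$. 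This buys a self-contained proof at the cost of a page of isogeny bookkeeping; the paper's citation buys brevity. Either is acceptable, and your identification of the trace computation as the crux is accurate.
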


\begin{proof}
\begin{enumerate}[label=(\roman*)]
    \item 
    
    Suppose $f(x)=f(y)$. Then $w_{p}(x)+y$ is linearly equivalent to $w_{p}(y)+x$ as degree~$2$ divisors. Since $X_0(p)$ has gonality strictly greater than $2$, we get equality of the divisors, not just up to linear equivalence. This means that either $x = y$; or $w_{p}(x) = x$ and $w_{p}(y) = y$, in which case $f(x) = f(y) = 0$.
    
    \item 
    
    Since $X_0(p)$ is not rational the fibre above 0 consists of the fixed points of $w_p$. It is well-known that these fixed points correspond to elliptic curves with complex multiplication by $\Q(\sqrt{-p})$; see for example Section~2 of \cite{ogg1974hyperelliptic}.
    
\end{enumerate}
\end{proof}

We are now finally ready for the proof of \Cref{lemma:main}.
\begin{proof}[Proof of \Cref{lemma:main}]
Let $x \in X_0(p)(K)$ be a point. If $f(x)=0$ then by the above lemma we are in case (II) of \Cref{lemma:main}, so from now on assume that $f(x) \neq 0$. The image of $f$ is contained in $J_0(p)_-$, so the first condition of \Cref{lemma:main} implies that $f(x) \in J_0(p)(\Q)$. But since $f$ is injective away from the fibre above $0$, we know that $x$ is the only element in its fibre, and thus must also be $\Q$-rational; i.e. we are in case (I) of \Cref{lemma:main}.
\end{proof}
\end{appendices}

\bibliographystyle{amsplain}
\bibliography{isogenyprimes.bbl}

\end{document}